\DeclareMathOperator{\supp}{supp}
\DeclareMathOperator{\adj}{adj}
\DeclareMathOperator{\Diff}{Diff}
\DeclareMathOperator{\lf}{\ell f}
\newcommand{\rf}{r\text{f}}
\DeclareMathOperator{\ff}{ff}
\let\Re\undefined
\DeclareMathOperator{\Re}{Re}
\let \sectionsymbol \S
\newcommand{\R}{\mathbb{R}}
\newcommand{\B}{\mathbb{B}}
\newcommand{\C}{\mathbb{C}}
\renewcommand{\S}{\mathbb{S}}
\renewcommand{\H}{\mathbb{H}}
\newcommand{\p}{\partial}
\newcommand{\tX}{\widetilde{X}}
\newcommand{\ta}{{\widetilde{\alpha}}}
\newcommand{\tx}{{\widetilde{x}}}
\newcommand{\ty}{\widetilde{y}}
\newcommand{\oX}{\overline{X}}
\newcommand{\rg}{\rangle}
\renewcommand{\lg}{\langle}
\newcommand{\Di}{\Delta\iota}
\newcommand{\tPhi}{\widetilde{\Phi}}
\newcommand{\cM}{\mathring{M}}
\newcommand{\pM}{\partial M}
\newcommand{\calA}{\mathcal{A}}
\newcommand{\calE}{\mathcal{E}}
\newcommand{\calF}{\mathcal{F}}
\newcommand{\calL}{\mathcal{L}}
\newcommand{\calM}{\mathcal{M}}
\newcommand{\calN}{\mathcal{N}}
\newcommand{\calR}{\mathcal{R}}
\newcommand{\calS}{\mathcal{S}}
\newcommand{\calU}{\mathcal{U}}
\newcommand{\calV}{\mathcal{V}}
\newcommand{\calW}{\mathcal{W}}
\renewcommand{\a}{\alpha}
\renewcommand{\b}{\beta}
\newcommand{\g}{\gamma}
\renewcommand{\d}{\delta}
\let\epsilon\varepsilon
\newcommand{\e}{\epsilon}
\newcommand{\z}{\zeta}
\newcommand{\smsec}{\O_0^{{1}/{2}}}
\renewcommand{\r}{\rho}
\renewcommand{\t}{\tau}
\renewcommand{\k}{\kappa}
\renewcommand{\l}{\lambda}
\newcommand{\s}{\sigma}
\renewcommand{\th}{\theta}
\renewcommand{\O}{\Omega}
\newtheorem{theorem}{Theorem}
\newtheorem{definition}{Definition}
\numberwithin{definition}{section}
\newtheorem{proposition}[definition]{Proposition}
\newtheorem{lemma}[definition]{{Lemma}}
\newtheorem{remark}[definition]{Remark}
\newtheorem{corollary}[definition]{Corollary}
\numberwithin{equation}{section}
\let \o \undefined
\def \o#1{\overline{#1}}
\let\td\undefined
\def \td#1{\widetilde{#1}}
\let\implies\Rightarrow
\title[Stability Estimates for the X-Ray Transform]{Stability Estimates for the X-Ray Transform on simple Asymptotically Hyperbolic manifolds}
\newcommand{\oz}{{\overline{\zeta}}}
\let\pointyphi\phi 
\let \vphi\phi
\let\phi\varphi
\begin{document}

\begin{abstract}{We study the normal operator to the geodesic X-ray transform on functions in the setting of simple asymptotically hyperbolic manifolds. We construct a parametrix for the normal operator in the 0-pseudodifferential calculus and use it show a stability estimate.
}

\author[Nikolas Eptaminitakis]{Nikolas Eptaminitakis}
\address{Department of Mathematics, Purdue University\\
West Lafayette, IN 47907}
\email{neptamin@purdue.edu}

\end{abstract}

\maketitle

\section{Introduction}

In this paper we consider the stability of geodesic X-ray transform 
\begin{equation}\label{eq:geodesic_xray}
	If(\g)=\int_\g f \,ds,
\end{equation}
where $\g$ is a unit speed geodesic, in the setting of asymptotically hyperbolic manifolds.
Our goal is to establish that small perturbations of the X-ray transform  cannot originate from large perturbations of the unknown function $f$, and find  appropriate spaces to measure such perturbations.
Sharp stability estimates are known for the X-ray transform in $\R^n$ (see \cite{MR856916}) and stability has also been studied extensively on compact manifolds with boundary under a variety of assumptions (see e.g. \cite{MR0431074}, \cite{MR511273}, \cite{MR1374572}, \cite{MR2068966}, \cite{MR2365669}, \cite{Uhlmann2016}, \cite{MR3770848}, \cite{assylbekov2018sharp} and \cite{ilmavirta2018integral} for a survey).

Introducing our geometric setting, an $n+1$-dimensional Riemannian manifold $(\cM,g)$ will be called {asymptotically hyperbolic (AH)} if $\cM$ is the interior of a smooth compact manifold with  boundary $M$ such that for some (and thus any) boundary defining function $x\in C^\infty(M)$ it is the case that $\o{g}:=x^2g$ extends to a $C^\infty$ Riemannian metric on $M$ with $\|dx\big|_{\pM}\|^2_{\o{g}}\equiv 1$.
Recall that {$x$ is a boundary defining function if $x\big|_{\p M}\equiv0$, $dx\big|_{\pM}\not\equiv 0$ and $x>0$ on $\cM$.}
AH manifolds generalize the Poincar\'e model of hyperbolic space. An AH metric $g$ determines a conformal class of metrics on $\p M$, called the \textit{conformal infinity} and given by $[x^2g\big|_{T\p M}]$.
As shown in \cite{MR2941112}, AH manifolds are geodesically complete and their sectional curvatures approach $-\|dx\big|_{\pM}\|^2_{\o{g}}=- 1$ as $x\to 0$.
Moreover, any unit speed geodesic $\g(t)$ in an AH manifold which eventually exits every compact set approaches a boundary point as $t\to \infty$, orthogonally to the boundary, and $x\circ \g(t)=O(e^{-t})$.
One of the issues when studying the X-ray transform on AH manifolds is that due to completeness  the integral in \eqref{eq:geodesic_xray} might not converge unless some conditions are imposed on $\g$ and  $f$; for instance, provided $\g$ does not spend infinite time in any compact set $K\subset \cM$ (i.e. it is not \textit{trapped}), it suffices to assume that $f\in |\log x|^{\a}C^0(M) $, $\a<-1$.
Parameterizing the space of geodesics in an AH manifold is also a non-trivial task; this point will be discussed in more detail in Section \ref{sec:geodesic_flow}.
In this paper we are concerned with \textit{simple} AH manifolds, defined in \cite{2017arXiv170905053G}; those are by definition non-trapping (i.e. they contain no trapped geodesics) and they have no boundary conjugate points, that is, there exists no non-trivial Jacobi field $Y$ along any unit-speed geodesic such that $\lim_{t\to\pm\infty}|Y(t)|_{g}\to 0$.
Using work of Knieper in \cite{MR3825848}, the authors in \cite{2017arXiv170905053G} showed that on a simple AH manifold there exist no interior conjugate points in the usual sense, whereas in \cite{eptaminitakis2019asymptotically} it was shown that non-trapping AH manifolds with no conjugate points in the usual sense may have boundary conjugate points, i.e. they are not  simple in general.
A simple AH manifold is necessarily simply connected and diffeomorphic to a ball, via the exponential map at any point, though it might exhibit positive curvature.
Simple AH manifolds are a natural geometric setting for the study of inverse problems such as tensor tomography and boundary rigidity. The study of those problems in the setting of AH manifolds was initiated in \cite{2017arXiv170905053G}, and the present work is meant as a step in this direction.

Our approach to stability is mainly inspired by the works of Stefanov-Uhlmann \cite{MR2068966} and of Berenstein-Casadio Tarabusi (\cite{MR1104811}), both of which analyze the \textit{normal operator} to the X-ray transform in different settings.
On a simple compact manifold with boundary $(X,\td{g})$ (i.e. non-trapping, with no conjugate points and with strictly convex boundary), which is the setting of \cite{MR2068966}, the normal operator is given by $\calN_{\td{g}}=I^*I$, where
\begin{equation}
	I^*F(z)=\int_{S^*_zX}F(\xi)d\mu_{\td{g}}(\xi), \quad F\in{C}^\infty(S^*X),\; z\in X;
\end{equation}
 here $d\mu_{\td{g}}$ is the measure induced on each fiber of $S_z^*X$ by the Lebesgue measure on $T_z^*X$ and for $f\in C^\infty (X)$, $If$ is understood as a function on the unit cosphere bundle $S^*X:=\{(z,\xi)\in T^*X: |\xi|_{\td{g}}=1\}$ which is constant along the orbits of the geodesic flow.
For now the notation $I^*$ is formal, however $I^*$ can be interpreted as a formal adjoint for $I$ using suitable inner products and function spaces (this is discussed in Section \ref{sec:geodesic_flow} for the AH case).
In \cite{MR2068966} it was shown that 
 $\calN_{\td{g}}$ extends to an elliptic pseudodifferential operator of order $-1$ on $\tX$, where $\tX$ is an open domain slightly larger than $X$ and of the same dimension, such that its closure is still simple. 
The ellipticity of $\calN_{\td{g}}$ then implied the existence of a left parametrix (inverse up to compact error), and this yielded a stability estimate of the form 
\begin{equation}\label{eq:normal_operator_estimate_cpt}
	\|u\|_{L^2(X)}\leq C \|\calN_{\td{g}} u\|_{H^1(\td{X})},\quad u\in L^2(\td{X}),\:\supp u\subset X,
\end{equation} using injectivity of $I$ on simple manifolds, which had already been established in the '70s.
The construction of the normal operator carries over in the same way on hyperbolic space and it is well defined on $C^\infty$ functions of suitable decay at infinity;
the authors of \cite{MR1104811} derived explicit inversion formulas for it using the spherical Fourier transform for radial distributions on hyperbolic space (see \cite{MR1723736}).
Although they did not explicitly state a stability estimate,  the estimate of Theorem \ref{thm:main} below for the special case of hyperbolic space follows immediately from their work using the machinery of the 0-calculus, which we will discuss shortly.

In \cite{2017arXiv170905053G} it was shown that on a simple AH manifold $(\cM,g)$,  $I$ is injective on $x C^\infty(M)$, where $x\in C^\infty(M)$ is a boundary defining function (in fact it is shown there that one can allow for trapped geodesics as well, provided that the trapped set is hyperbolic for the geodesic flow).
The method of proof relied on showing that functions a priori in $x C^\infty(M)$ which lie in the nullspace of $I$ actually vanish to infinite order at $\p M$, and it does not yield stability.
The main result of the present work is a stability estimate analogous to \eqref{eq:normal_operator_estimate_cpt} on simple AH manifolds and a strengthened injectivity result.
The normal operator on a simple AH manifold $(\cM,g)$ is defined similarly to the case of simple compact manifolds with boundary: we let
\begin{equation}
	\calN_gf=I^*If(z)=\int_{S^*_z\cM}If(\xi)d\mu_g(\xi), \quad f\in \dot{C}^\infty(M),\; z\in \cM,
\end{equation}
where $\dot{C}^\infty(M)$ denotes smooth functions vanishing to infinite order at the boundary and $d\mu_g$ is the measure induced on the fibers of $S^*\cM$ by $g$, as before.
In our setting, $\calN_g$ turns out to be a well behaved object within the framework of the 0-calculus of pseudodifferential operators of Mazzeo and Melrose, which was introduced in \cite{MR916753} to analyze a modified resolvent of the Laplacian on AH spaces.
0-pseudodifferential operators generalize the \textit{0-differential operators}, consisting of finite sums of finite products of \textit{0-vector fields}: those are the smooth vector fields on $M$ that vanish on $\p M$ and are denoted by $\calV_0$.
They can be written locally near $\p M$ as smooth linear combinations of $\{x\p_x, x\p_{y^1},\dots, x\p_{y^n}\}$, where $y^\a$ restrict to coordinates on $\p M$.
Our stability estimate will be in terms of certain weighted Sobolev spaces on which 0-pseudodifferential operators naturally act:
we let $dV_g$ be the Riemannian volume density on $M$ induced by $g$ and for $k\in \mathbb{N}_0=\{0,1,\dots\}$ we let
\begin{equation}
	x^{\d}H_0^k(M,dV_g)=\{u\in x^\d L^2(M,dV_g):x^{-\d}V_1
	\cdots V_m u\in L^2(M,dV_g),\:
	 m\leq k, \: V_j\in \calV_0\}.
\end{equation}
If $s\geq 0$ then $H_0^s(M,dV_g)$ is defined by interpolation and for $s<0$ by duality.
Fixing vector fields $V_j\in \calV_0$ in coordinate patches we can make sense of the norms $\|\cdot\|_{x^\d H_0^{k}(M,dV_g)}$. 
As we will show in Section \ref{sec:pseudodifferential_property}, it turns out that $I$ and $\calN_g$ can be extended to operators on  $x^{\d}L^2(M,dV_g)$  for $\d>-n/2$, bounded into appropriate weighted Sobolev spaces; specifically  for $\calN_g$ we have that it is bounded $x^{\d}L^2(M,dV_g)\to x^{\d'}H_0^1(M,dV_g)$ provided $\d'\leq \d$, $\d>-n/2$ and $\d'<n/2$.
The main result of the paper is as follows:
{}
\begin{theorem}\label{thm:main}
	Let $(\cM^{n+1},g)$ be a simple AH manifold, $n\geq 1$.
	 Then $I$ and $\calN_g=I^*I$ are injective on $x^\d L^2(M,dV_g)$, $\d>-n/2$.
	Moreover, one has the stability estimate: 
\begin{equation}
	\|u\|_{x^\d H_0^{s}(M,dV_g)}\leq C\|\calN_g  \;u\|_{x^\d H_0^{s+1}(M,dV_g)}, \quad \d\in (-n/2,n/2),\: s\geq 0.
\end{equation}
\end{theorem}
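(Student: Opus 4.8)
The plan is to realize $\calN_g$ as a \emph{fully elliptic} operator in the 0-calculus and to mimic the Stefanov--Uhlmann argument, with the Mazzeo--Melrose machinery supplying the boundary model. First I would invoke the results of the preceding sections, according to which $\calN_g$ is an elliptic 0-pseudodifferential operator of order $-1$: its interior principal symbol is elliptic, exactly as on a simple compact manifold with boundary, while its normal operator at the front face is, after the natural identifications, the normal operator of the X-ray transform on hyperbolic space $\H^{n+1}$. The crucial input is that this model operator is invertible on $x^\d L^2$ precisely when $\d\in(-n/2,n/2)$, which one reads off from the explicit inversion via the spherical Fourier transform of \cite{MR1104811}, the endpoints $\pm n/2$ being the indicial exponents of the model. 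Full ellipticity on this weight range then produces a left parametrix $Q$, a 0-pseudodifferential operator of order $+1$, satisfying
\begin{equation}
	Q\calN_g=\mathrm{Id}-R,
\end{equation}
where $R$ is a residual operator of the calculus that gains in both Sobolev order and boundary weight.

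Granting the parametrix, the stability estimate is nearly formal at the level of mapping properties. Since $Q$ has order $+1$ it is bounded $x^\d H_0^{s+1}(M,dV_g)\to x^\d H_0^{s}(M,dV_g)$, so applying it to $\calN_g u$ gives
\begin{equation}
	\|u\|_{x^\d H_0^{s}}\leq \|Q\calN_g u\|_{x^\d H_0^{s}}+\|Ru\|_{x^\d H_0^{s}}\leq C\|\calN_g u\|_{x^\d H_0^{s+1}}+\|Ru\|_{x^\d H_0^{s}}.
\end{equation}
Because $R$ improves weight and regularity, it is compact as a map $x^\d H_0^{s}\to x^\d H_0^{s}$ by a Rellich-type embedding for 0-Sobolev spaces; combined with the injectivity of $\calN_g$, a standard compactness-and-contradiction argument removes the term $\|Ru\|_{x^\d H_0^{s}}$ and yields the asserted estimate for $\d\in(-n/2,n/2)$, $s\geq 0$.

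For the injectivity I would first establish that $\ker\calN_g=\ker I$ on $x^\d L^2$, $\d>-n/2$: if $\calN_g u=0$ then, pairing against $u$ with the adjoint relation established earlier, $\|Iu\|^2=\langle\calN_g u,u\rangle=0$, whence $Iu=0$; conversely $Iu=0$ forces $\calN_g u=I^*Iu=0$. To see that $u=0$ I would bootstrap with the parametrix: $\calN_g u=0$ gives $u=Ru$, and iterating the smoothing and weight-gaining properties of $R$ places $u$ in $\dot{C}^\infty(M)$. The injectivity of $I$ on $xC^\infty(M)$ from \cite{2017arXiv170905053G}, proved there by showing that nullspace elements vanish to infinite order at $\pM$, then forces $u=0$. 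This handles $\d\in(-n/2,n/2)$ directly; for the remaining range $\d\geq n/2$ one notes $x^\d L^2\subset x^{\d'}L^2$ for any $\d'\leq\d$, in particular for some $\d'\in(-n/2,n/2)$, so injectivity there follows by inclusion.

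The main obstacle is the boundary analysis underlying the first paragraph: identifying the normal operator of $\calN_g$ with the hyperbolic model and proving it invertible exactly for $\d\in(-n/2,n/2)$. This is precisely what the 0-calculus is needed for and what pins down the weight range, whereas the interior ellipticity and the absorption of the remainder are comparatively routine. A secondary technical point is confirming that the parametrix error $R$ gains strictly in both order and weight, which is what makes the Rellich compactness --- and hence the removal of the remainder --- available.
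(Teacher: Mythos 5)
Your overall architecture matches the paper's: full ellipticity of $\calN_g$ in the 0-calculus (elliptic interior symbol plus the invertible hyperbolic model operator of \cite{MR1104811}), a left parametrix with compact error, the identification $\ker\calN_g=\ker I$ via the adjoint relation, reduction to the known injectivity of $I$ from \cite{2017arXiv170905053G}, and removal of the compact term by a standard functional-analysis lemma. The stability half of your argument is essentially the paper's proof.

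The genuine gap is in your injectivity step, specifically the claim that iterating $u=Ru$ ``places $u$ in $\dot{C}^\infty(M)$.'' This is false. The error term lies in $\Psi_0^{-\infty,\calF}(M)$ with left-face index set $F_\ell\geq n$ (not $\emptyset$), so its Schwartz kernel has a nontrivial polyhomogeneous expansion at the left face of $M_0^2$ beginning at order $n$. Consequently $Ru$ generically carries an $x^n$ term in its boundary expansion no matter how rapidly $u$ decays, and the bootstrap saturates: it yields $u\in x^{\d'}H_0^\infty(M,dV_g)$ for all $\d'<n/2$ (roughly, pointwise decay of order $n$), but can never produce infinite-order vanishing. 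Bridging from order-$n$ decay to $u\equiv 0$ is exactly where the paper's real work lies, and it requires two ingredients absent from your proposal: first, a Mellin-transform argument (Lemma \ref{lm:conormal} and Proposition \ref{prop:polyhomogeneous}) showing that elements of $\ker\calN_g$ are polyhomogeneous conormal at $\p M$ with index set $\geq n$ --- one needs genuine asymptotic expansions, not just weighted decay; second, the verification that the injectivity proof of \cite{2017arXiv170905053G} (their Proposition 3.15 together with the Pestov-identity argument), stated there for $xC^\infty(M)$, extends to polyhomogeneous functions vanishing to order at least $1$. It is this structural property of $\ker I$ --- not the parametrix --- that upgrades a polyhomogeneous nullspace element to one vanishing to infinite order, after which the Pestov argument gives $u\equiv 0$. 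A secondary flaw in the ordering of your argument: you evaluate the pairing $\langle\calN_g u,u\rangle=\|Iu\|^2$ before improving the decay of $u$, but for $\d\leq 0$ this pairing is not justified by the adjoint relation, which requires $u\in|\log x|^{-\b}L^2(M,dV_g)$, $\b>1/2$; the decay (in fact polyhomogeneity) upgrade must come first, as in the paper.
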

\noindent Note that $x C^\infty(M)\subset x^\d L^2(M,dV_g)$ provided $\d<1-n/2$, so Theorem \ref{thm:main} includes the injectivity result of \cite{2017arXiv170905053G} on simple AH manifolds as a special case.
However, that result is used in an essential way in the proof, similarly to the way the injectivity of $I$ on simple compact manifolds with boundary was used to derive \eqref{eq:normal_operator_estimate_cpt} in \cite{MR2068966}.

As already mentioned, in the case of hyperbolic space a stability estimate as in Theorem \ref{thm:main} follows immediately from the work  of \cite{MR1104811}; moreover, the inversion of the hyperbolic Radon transform on the two dimensional hyperbolic space has been numerically implemented in a stable manner (\cite{MR1463592}, \cite{MR1812480}).
In the AH setting (under some assumptions) stability of the local X-ray transform follows from work in a forthcoming paper by C. Robin Graham and the author (see \cite[Chapter 1]{EptaminitakisNikolaos2020Gxto}).

We briefly outline the idea of the proof of
 Theorem \ref{thm:main}.
As we show in Section \ref{sec:pseudodifferential_property},  $\calN_g$ is an elliptic pseudodifferential operator in $\Psi_0^{-1,n,n}(M)$ in the large 0-calculus (that is, it is a pseudodifferential operator of order $-1$ whose Schwartz kernel vanishes to order $n$ at the side faces of the 0-stretched product, see Section \ref{ssub:stretched_spaces_and_the_0_calculus}). 
Its \textit{model operator}\footnote{The model operator is typically called the normal operator; however, we use this name to avoid confusion with the normal operator $\calN_g$.} can be identified with $\calN_{h}$, where $h$ is the hyperbolic metric on the Poincar\'e ball; using the explicit inversion formulas for $\calN_{h}$ derived in \cite{MR1104811} and methods developed in \cite{MR916753} and \cite{MR1133743} we construct a left parametrix for $\calN_g$.
In \cite{MR916753} and \cite{MR1133743} parametrices were constructed for elliptic 0 and edge differential operators, whereas here we apply those techniques to construct a parametrix for a pseudodifferential operator.
We mention that Fredholm properties of certain classes of 0-pseudodifferential operators were also studied in \cite{MR1965451}.
The parametrix is used in two ways: firstly, one obtains an estimate
\begin{equation}\label{eq:closed_range_estimate}
	\|u\|_{x^\d H_0^{s}(M,dV_g)}\leq C\left(\|\calN_gu\|_{x^\d H_0^{s+1}(M,dV_g)}+\|Ku\|_{x^\d H_0^{s}(M,dV_g)}
	\right)
\end{equation}
for $ \d\in (-n/2,n/2)$ and $s\geq 0,$
where $K:x^\d H_0^{s}(M,dV_g)\to x^\d H_0^{s}(M,dV_g)$ is a compact  operator.
Next, using the Mellin transform and the parametrix it can be shown that any function $u\in x^\d L^2(M,dV_g)$ in the nullspace of $\calN_g$, where $\d>-n/2,$ is smooth in $\cM$ and has a polyhomogeneous expansion at $\p M$, vanishing there to order at least $n$. 
The author is indebted to Rafe Mazzeo for showing him this argument, which is similar in spirit to the constructions of polyhomogeneous expansions for elements in the nullspace of elliptic edge differential operators in \cite[\sectionsymbol 7]{MR1133743}, also see \cite{MR1087800}. 
In  \cite[Proposition 3.15]{2017arXiv170905053G} it is shown that if $u\in x C^\infty(M)$ lies in the nullspace of $I$ then $u$ vanishes to infinite order at $\p M$, and one checks that the proof also works for $u$ a priori assumed polyhomogeneous and vanishing to order at least 1 at $\p M$.
Since the nullspace of $\calN_g$ agrees with that of $I$, it follows that $u$ is in the nullspace of the latter and polyhomogeneous, hence  it vanishes to infinite order at $\p M$.
Once this has been established, the injectivity argument in \cite{2017arXiv170905053G} using  Pestov identities applies to conclude that $u\equiv 0$.
Finally the injectivity of $\calN_g$ together with \eqref{eq:closed_range_estimate} yields Theorem \ref{thm:main} using a standard functional analysis result.

It would be interesting to explore whether stability still holds in the AH setting when one relaxes the simplicity assumption.
In the compact manifold with boundary setting,  presence of conjugate points in the interior of a compact Riemannian surface causes stability to fail in dimension 2 (\cite{MR2970707}, \cite{MR3339183}), and it is natural to expect an analogous behavior in the AH setting.
However, in dimension 3 and higher, additional geometric assumptions can allow for stability even if there are conjugate points (\cite{Uhlmann2016}, \cite{MR3770848}) so it is likely that analogous results hold on AH manifolds.
It would be especially interesting to investigate whether stability or instability holds in the presence of boundary conjugate points. 
It would also be interesting to study stability in the presence of trapped geodesics; as already mentioned, in the case when the trapped set is hyperbolic for the geodesic flow, injectivity of $I$ on $x C^\infty(M)$ is known by \cite{2017arXiv170905053G} and  stability can be shown on compact manifolds with strictly convex boundary, no conjugate points and hyperbolic trapped set (see \cite{MR3600043}).

The paper is organized as follows: 
in Section \ref{sec:geodesic_flow} we provide some background on the geodesic flow and the X-ray transform on AH manifolds following \cite{2017arXiv170905053G}.
Section \ref{ssub:stretched_spaces_and_the_0_calculus} contains background material on the 0-geometry and 0-calculus that will be needed later.
In Section \ref{sec:pseudodifferential_property} we use the form of the distance function on a simple AH manifold (Proposition \ref{prop:distance_function}) to show that the normal operator $\calN_g$ is an elliptic pseudodifferential operator in the 0-calculus.
In Section \ref{sec:model_operator} we identify the model operator of $\calN_g$, which is invertible, as shown in \cite{MR1104811}.
Finally, in Section \ref{sec:parametrix} we construct a parametrix for $\calN_g$, use it to show boundary regularity for elements in its nullspace,
and prove Theorem \ref{thm:main}.
Throughout the paper we use Einstein notation, with Latin indices running from 0 to $n$ and Greek indices from $1$ to $n$.

\section{Geodesic Flow of AH Manifolds and the X-Ray Transform}

\label{sec:geodesic_flow}

In this section we recall facts related to the geodesic flow and the X-ray transform on AH manifolds, which were analyzed in \cite{2017arXiv170905053G}, and show a lemma which will be used in Section \ref{sec:pseudodifferential_property} to prove a mapping property for the X-ray transform.
In this section $(\cM^{n+1},g)$ is a non-trapping AH manifold, with $\cM$  the interior of a compact manifold with boundary $M$.

Each representative $h$ in the conformal infinity of $(\cM,g)$ determines a boundary defining function $x$ for $\p M$, called \textit{geodesic boundary defining function} associated to $h$, such that $x^2{g}\big|_{T\p M}=h$ and $\|dx\|_{x^2{g}}=1$ near $\pM$.
Then via the flow of its gradient, $x$ induces a product decomposition of a collar neighborhood of $\p M$ as $[0,\e)_x\times\p M$, in terms of which the metric is written  near $\p M$ in \textit{normal form} 
 	$$g=\frac{dx^2+h_x}{x^2},$$
where $h_x$ is a smooth 1-parameter family of metrics on $\p M$ satisfying $h_0=h$.
Choosing coordinates $y^\a$ for $\p M$ near a boundary point we can write $g=\frac{dx^2+(h_x)_{\a\b}dy^\a dy^\b}{x^2}$.

Parametrizing the space of geodesics on $\cM$ is more involved than, e.g. on a compact manifold with boundary, due to their behavior near $\p M$ and the fact that they have infinite length.
As shown in \cite{2017arXiv170905053G}, it can be done by introducing an appropriate extension of the unit cosphere bundle $S^*\cM=\{(z,\xi)\in T^*\cM:|\xi|_g=1\}$ down to $\p M$.
Recall that Melrose's b-cotangent bundle ${}^b T^*M$ (see \cite{MR1348401}) is a smooth bundle over $M$ with natural projection $\pi$, canonically isomorphic with $T^*\cM$ over $\cM$ and trivialized locally near the boundary by $({dx}/{x}  , dy^1,\dots,d  y^n)$.
Viewed as a subset of $({}^bT^*M)^\circ$, $S^*\cM$ can be written near $\p M$ as $\{(z,\xi=\oz \frac{dx}{x}+\eta_\a dy^\a)\in ({}^bT^*M)^\circ:\oz^2+x^2|\eta|_{h_x}^2=1\}$.
Thus the closure of ${S^*\cM}$ in ${}^bT^*M$ is a smooth embedded non-compact submanifold of ${}^bT^*M$ with disconnected boundary; we denote it by $S^*M$.
The  Hamiltonian vector field $X$ on $S^*\cM$ associated with the metric Lagrangian $\calL_g=|\xi|_g^2/2$ can be written as 
$X=x \o{X}$, where $\o{X}$ extends to be smooth on ${S^*M}$ and transversal to its boundary: in coordinates it takes the form
\begin{align}\label{eq:Hamiltonian_xbar}
	\oX=\o{\z}\p_x+xh_x^{\a\b}\eta_\a\p_{y^\b}-\big(x|\eta|^2_{h_x}+\frac{1}{2}x^2\p_x |\eta|^2_{h_x}\big)\p_{\o{\z}}-\frac{1}{2}x\p_{y^\a}|\eta|^2_{h_x}\p_{\eta_\a}.
\end{align}
The flow of $\oX$ is incomplete and, since $X$ and $\oX$ are related by multiplication by a scalar function, their integrals curves in $S^*\cM$ agree up to reparametrization.
Orbits of the flow of $\o{X}$ can be parametrized by their ``incoming'' covector, that is, each orbit can be identified with its intersection with the connected component of $\p{S^*M}$ on which $\o{X}$ is inward pointing.
This component, $\p_-{S^*M}:=\{(z,\xi=\oz \frac{dx}{x}+\eta_\a dy^\a)\in {}^bT^*M:x=0,\, \oz=1\}$ is often referred to as the incoming boundary.
 The definition of the outcoming boundary $\p_+S^*M$ is analogous, except $\oz=-1$ there.
 Both of those sets are invariant subsets of ${}^bT^*M\big|_{\p M}$, independent of the choice of coordinates and of $g$.
 Given a choice of conformal representative (which induces a geodesic boundary defining function), $\p_\pm S^*M$ can be identified with $T^*\pM$ via $\mp x^{-1}dx+\eta^\a dy^\a\leftrightarrow \eta_\a dy^\a$.

The unit cosphere bundle $S^*\cM$ has a natural measure $d\l$ called the Liouville measure, induced by the restriction to $S^*\cM$ of the $2n+1$ form $\l=\a\wedge (d\a)^n$, with $\a$ the tautological 1-form on $T^*\cM$.
This measure decomposes as $d\l=dV_g d\mu_g$, where $d\mu_g$ is the measure induced by $g$ on each fiber of $S^*\cM$ and $dV_g$ is the Riemannian volume density on $\cM$.
As shown in \cite[Lemma 2.2]{2017arXiv170905053G}, $xd\l$ extends from $S^*\cM$ to a smooth measure on $S^*M$. 
Moreover, $\iota_X \l$ extends to a smooth $2n$-form on $S^*M$, which restricts to a volume form on $\p_-S^*M$; the latter agrees with the canonical volume form on $T^*\p M$ (induced by the symplectic form there) under the identification described above. 
We will denote the corresponding measure on $\p_- S^*M$ by $d\l_\p$.

Now let $f\in C_c^\infty (S^*\cM)$ and $\phi_t$ be the flow of the Hamiltonian vector field $X$ on $S^*\cM$, which is complete. We define the X-ray transform
 \begin{equation}\label{eq:defn_xray}
 	If(z,\xi)=\int_{-\infty}^\infty f(\phi_t(z,\xi))dt\in C_X^\infty(S^*\cM),
 \end{equation}
where the space $C_X^\infty(S^*\cM)$ consists of smooth functions on $S^*\cM$ constant along the orbits of $X$.
Since $C_c^\infty(\cM)$ can be naturally viewed as a subset
of $C_c^\infty(S^*\cM)$ via pullback, \eqref{eq:defn_xray} reduces to the usual X-ray transform on $C_c^\infty(\cM)$, viewed as an element of $ C_X^\infty(S^*\cM)$.
Now as we mentioned before the vector field $\oX=x^{-1}X$ extends to be smooth on $S^*M$ and transverse to $\p S^*M$.
This implies that any $u\in C_X^\infty(S^*\cM)$ extends smoothly to $S^*M$ down to $\p_\pm S^*M$:
by transversality, the flow of $\oX$ running forward and backward can be used to identify a neighborhood of any point in $\p_\mp S^*M$ respectively with a subset of $[0,\e)_t\times \p_\mp S^*M$; then in terms of this decomposition $u$ is independent of $t$ and thus extends smoothly down to $t=0$.
Therefore the restriction $u\big|_{\p _\pm S^*M}\in C^\infty(\p_\pm S^*M)$ is well defined; conversely, any function in $C^\infty(\p_\pm S^*M)$ can be extended off of $\p_\pm S^*M$ to be constant along the orbits of $\oX$ in $S^*M$, and hence also those of $X$ in $S^*\cM$, thus yielding an element of $C_X^\infty(S^*\cM)$.
This discussion implies that we have an isomorphism 
\begin{equation}\label{eq:identification}
	C_X^\infty(S^*\cM)\rightarrow C^\infty(\p_-S^*M)
\end{equation}
and both spaces are also isomorphic to $C_{\oX}^{\infty}(S^*M)=C^\infty(S^*M)\cap \ker \oX$.
Due to these facts, \eqref{eq:defn_xray} can also be regarded as an element of $C_{\oX}^\infty(S^*M)$, and of $C^\infty(\p_-S^*M)$ upon restricting.
The range of $I$ is actually smaller than $C_{\oX}^{\infty}(S^*M)$ whenever acting on $C_c^\infty(S^*\cM)$ (or $C_c^\infty(\cM)$):
by the discussion on short geodesics in \cite[Section 2.2]{2017arXiv170905053G}, given any compact set $K\subset S\cM$ there exists a compact set $K'\subset \p_-S^*M$ such that any integral curve of $\oX$ starting at $(z,\xi)\not\in K'$ does not intersect $K$.
Moreover, given a compact $K'\subset\p_-S^*M$, the union of all  integral curves of $\oX$ starting at $K'$ forms a compact subset of $S^*M$.
This implies that $I:C_c^\infty(S^*\cM)\to C_{c,\oX}^\infty(S^*M)$, where $C_{c,\oX}^\infty(S^*M)=C_{c}^\infty(S^*M)\cap \ker (\oX)$.

The X-ray transform can be expressed using the non-complete flow $\o{\phi}_\t$ of $\oX$.
As already mentioned, $\o{\phi}_\t$ is   a reparametrization of the flow $\phi_t$ of $X$ in $S^*\cM$:
for $(z,\xi) \in S^*\cM$ one has $\o{\phi}_\t(z,\xi)=\phi_t(z,\xi)$ with $t(\t,(z,\xi))=\int_0^\t\frac{d\s}{x\circ \o{\phi}_
\s(z,\xi)}$.
Moreover, for each $(z,\xi)\in S^*M$ there exist finite $\t_\pm(z,\xi) \geq0$ such that $\o{\phi}_{\pm\t_\pm(z,\xi)}(z,\xi)\in \p_\pm S^*M$.
Thus for $f\in C^\infty_c(S^*\cM)$ (or $f\in C^\infty_c(\cM)$) \eqref{eq:defn_xray} can be rewritten as
\begin{equation}\label{eq:xray_finite_interval}
If(z,\xi)=\int_0^{\t_+(z,\xi)}f(\o{\phi}_\t(z,\xi))\frac{d\t}{x\circ\o{\phi}_\t(z,\xi)}\in C_{c,\oX}^\infty(S^*M). 
\end{equation}

One can identify a formal adjoint $I^*$ of $I$ on appropriate function spaces using suitably chosen inner products.
By \cite[Lemma 3.6]{2017arXiv170905053G}, there is an analog of Santal\'o's formula:
\begin{equation}\label{eq:santalo}
	\int_{S^*\cM} f\, d\l=\int_{\p_-S^*M}{I}f \,d\l_\p ,\quad  f\in C_c^\infty(S^*\cM).
\end{equation}
Note that this implies that $I$ also extends continuously as an operator ${I}: L^1(S^*\cM;d\l)\to L^1(\p_-S^*M;d\l_\p)$ (where the isomorphism \eqref{eq:identification} is used implicitly).
We define an inner product on $C_{c,\oX}^\infty(S^*M)$: for $u_1,u_2\in C_{c,\oX}^\infty(S^*M)$ let
\begin{equation}
	\lg u_1, u_2\rg_\p:=\int_{\p_-S^*M} u_1 \, \o{u_2}\, d\l_\p,
\end{equation}
where on the right hand side $u_1$, $u_2$ are restricted to $\p_-S^*M$; we will generally not write this restriction explicitly.
Now consider the X-ray transform viewed as an operator $I:C_c^\infty(M)\to C_{c,\oX}^\infty(S^*M)$ and define the operator
 $I^*:C_{c,\oX}^\infty(S^*M)\to C^\infty(\cM)$ by
\begin{equation}
	I^*u(z)=\int_{S^*_z\cM}u\, d\mu_g,\quad u\in C_{c,\oX}^\infty(S^*M).
\end{equation}
Considering real valued functions $f\in C_c^\infty(\cM)$ and $u\in C_{c,\oX}^\infty(S^*M)$,  we use \eqref{eq:santalo} to compute
\begin{equation}
\begin{aligned}
  \lg u,If\rg_\p=\int_{\p_-S^*M}&u\, {If}\, d\l_\p=\int_{\p_-S^*M} {I(uf)}\, d\l_\p\\*
  =&\int_{S^*\cM} u\, {f}d\l=\int_{\cM} \Big(\int_{S^*_z\cM}u\, d\mu_g \Big){f}(z)dV_g(z)=\lg I^*u , f\rg_{L^2(M,dV_g)}.
\end{aligned}\label{eq:adjoint}  
\end{equation}
This computation implies that with the stated inner products and function spaces $I^*$ is a formal adjoint for $I$.

We will later need to consider the X-ray transform and the normal operator $\calN_g=I^*I$ acting on functions that live in weighted $L^2$ spaces.
The target space of $I$ will also have to be an appropriately weighted $L^2$ space and as will become apparent soon it is more natural for this discussion to view $If$ as a function on $\p_-S^*M$.
Restriction to $\p_-S^*M$ induces an isometry between $C^\infty_{c,\oX}(S^*M)$ and $C_c^\infty(\p_-S^*M)$ with respect to the the inner product $\lg \cdot, \cdot\rg_\p$ and the $L^2(\p_-S^*M;d\l_\p)$ inner product respectively, so  \eqref{eq:adjoint} can also be rewritten as
\begin{equation}\label{eq:adjoint_2}
 	\lg u,If\rg_{L^2(\p_-S^*M;d\l_\p)}=\lg I^*u , f\rg_{L^2(M,dV_g)}, \quad f\in C_c^\infty(\cM),\quad u\in C_{c,\oX}^\infty(S^*M).
 \end{equation}
By \cite[Lemma 3.8]{2017arXiv170905053G} $I$ extends to a bounded operator ${I}:|\log x|^{-\b} L^2(S^*\cM;d\l)\to L^2(\p_- S^*M;d\l_\p)$ provided $\b>1/2$. This also implies that ${I}:|\log x|^{-\b} L^2(M,dV_g)\to L^2(\p_- S^*M;d\l_\p)$ is bounded.
Hence \eqref{eq:adjoint_2} implies that  $I^*$ extends to a bounded operator $I^*:L^2(\p_- S^*M;d\l_\p)\to |\log x|^{\b} L^2(M,dV_g)$ for $\b>1/2$ (where the action of $I^*$ on a function $u\in L^2(\p_- S^*M;d\l_\p)$ is understood as an action on the extension of $u$ to $S^*M$ so that it is constant along the orbits of $\oX$).
Thus \eqref{eq:adjoint_2} is valid for $u\in L^2(\p_- S^*M;d\l_\p)$ and $f\in |\log x|^{-\b} L^2(M,dV_g)$.
Moreover, 
 the normal operator is bounded
\begin{equation}\label{normal_operator_first_boundedness}
	\calN_g=I^*I:|\log x|^{-\b} L^2(\cM,dV_g)\to|\log x|^{\b} L^2(\cM,dV_g), \quad \b>1/2.
\end{equation}
Using the microlocal properties of $\calN_g$ that we  prove in Section \ref{sec:pseudodifferential_property} and Lemma \ref{lm:sobolev_restriction} below, we will obtain extensions of $I$ and $I^*I$ to larger spaces of functions, in Corollary \ref{cor:boundedness_I}.

The following lemma relates a weighted $L^2$ norm of functions in $C_{\oX}^\infty (S^*M)$ with a weighted $L^2$ norm of their restriction to $\p_-S^*M$.
We set $\lg\eta\rg_h:=\sqrt{1+|\eta|^2_{h}}$.

\begin{lemma}\label{lm:sobolev_restriction}
Let $\d<0$. Then there exists a $C=C_\d>0$ such that if $u\in C_{\oX}^\infty (S^*M)\cap {x^{\d}L^2(S^*\cM;d\l)}$  one has, using the isomorphism  \eqref{eq:identification},
\begin{equation}
	 \frac{1}{C}\|u\|_{\lg\eta\rg_h^{-\d}L^2(\p_-S^*M;d\l_\p)}\leq
\|u\|_{x^{\d}L^2(S^*\cM;d\l)} \leq 
C\|u\|_{\lg\eta\rg_h^{-\d}L^2(\p_-S^*M;d\l_\p)}<\infty.
\end{equation}

\end{lemma}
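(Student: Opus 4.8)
The plan is to reduce both inequalities to a single pointwise two-sided bound on a weight function on $\p_-S^*M$ produced by Santal\'o's formula \eqref{eq:santalo}, and then estimate that weight by splitting $\p_-S^*M$ into a compact part and a region of short geodesics hugging the boundary. First I would apply \eqref{eq:santalo} to $f=x^{-2\d}|u|^2$. Since $u$ is constant along the orbits of $\oX$, for $(z,\xi)\in\p_-S^*M$ the factor $|u|^2$ is constant along the orbit, so \eqref{eq:xray_finite_interval} gives
\[
	I\big(x^{-2\d}|u|^2\big)(z,\xi)=|u(z,\xi)|^2\,W(z,\xi),\qquad W(z,\xi):=\int_0^{\t_+(z,\xi)}\!\big(x^{-2\d-1}\circ\o{\phi}_\t\big)(z,\xi)\,d\t.
\]
Because $f$ is not compactly supported I would justify this by applying \eqref{eq:santalo} to $\chi_k f$ for cutoffs $\chi_k\nearrow 1$ and passing to the limit by monotone convergence (every integrand is nonnegative), obtaining $\|u\|_{x^\d L^2(S^*\cM;d\l)}^2=\int_{\p_-S^*M}|u|^2\,W\,d\l_\p$. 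Since $\|u\|_{\lg\eta\rg_h^{-\d}L^2(\p_-S^*M;d\l_\p)}^2=\int_{\p_-S^*M}|u|^2\,\lg\eta\rg_h^{2\d}\,d\l_\p$, the entire lemma (finiteness included) reduces to the uniform comparison $W(z,\xi)\asymp\lg\eta\rg_h^{2\d}$ on $\p_-S^*M$.

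On the compact region $\{|\eta|_h\le R\}$ this is immediate: near the endpoints $\t=0,\t_+$ one has $\oz\to\pm1$, hence $x\sim\t$ (resp.\ $\t_+-\t$) and the integrand behaves like $\t^{-2\d-1}$, which is integrable precisely because $\d<0$. Thus $W$ is finite, strictly positive, and (using smoothness of the flow, smoothness of $\t_+$ by transversality of $\oX$, and the fact that the union of orbits over a compact subset of $\p_-S^*M$ is compact in $S^*M$) continuous, so it is bounded above and below by positive constants on the compact set $\{|\eta|_h\le R\}$; the same holds trivially for $\lg\eta\rg_h^{2\d}$.

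The substantive region is $\{|\eta|_h>R\}$, where the geodesics are short and stay in a collar $x<\e$ in which $g$ is in normal form. Here I would reparametrize by $\theta$ via $\oz=\cos\theta$ (valid since $\dot{\oz}=-(x|\eta|^2_{h_x}+\tfrac12 x^2\p_x|\eta|^2_{h_x})<0$ in the collar, so $\oz$ decreases monotonically from $1$ to $-1$). The constraint $\oz^2+x^2|\eta|^2_{h_x}=1$ becomes $x|\eta|_{h_x}=\sin\theta$, giving $x=\sin\theta/|\eta(\theta)|_{h_x}$, while a direct computation from \eqref{eq:Hamiltonian_xbar} yields $\frac{d}{d\t}|\eta|^2_{h_x}=\oz\,(\p_x h_x^{\a\b})\eta_\a\eta_\b=O(|\eta|^2_{h_x})$; combined with $\t_+=O(|\eta|_h^{-1})$, Gr\"onwall's inequality forces $|\eta(\theta)|_{h_x}\asymp|\eta|_h=:a$ with ratio tending to $1$ as $a\to\infty$. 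Using these, $\dot\theta=|\eta|_{h_x}\big(1+O(a^{-1})\big)$, so
\[
	W=\int_0^\pi\Big(\tfrac{\sin\theta}{|\eta(\theta)|_{h_x}}\Big)^{-2\d-1}\frac{d\theta}{|\eta(\theta)|_{h_x}}\big(1+O(a^{-1})\big)=\big(1+o(1)\big)\,a^{2\d}\int_0^\pi(\sin\theta)^{-2\d-1}\,d\theta,
\]
where the last integral converges exactly because $-2\d-1>-1$. Hence $W\asymp a^{2\d}\asymp\lg\eta\rg_h^{2\d}$ uniformly for $a>R$, completing the comparison.

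The main obstacle is the uniform control in the short-geodesic regime $|\eta|_h\to\infty$: one must confine these orbits to the collar and keep $|\eta|_{h_x}$ comparable to its incoming value along the whole orbit. I expect this to require a bootstrap argument (the confinement to $\{x<\e\}$ and the Gr\"onwall estimate feed into each other, with the turning-point scaling $x_{\max}\asymp a^{-1}$ as output), drawing on the discussion of short geodesics in \cite{2017arXiv170905053G}. The device that makes the estimate clean and uniform is the change of variable $\oz=\cos\theta$, which removes the square-root singularity at the turning point and exposes the exact power $a^{2\d}$.
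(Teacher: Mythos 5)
Your proposal is correct and follows essentially the same route as the paper: both apply Santal\'o's formula to $x^{-2\d}|u|^2$, use constancy of $u$ along orbits to reduce the lemma to the two-sided comparison $I(x^{-2\d})\asymp\lg\eta\rg_h^{2\d}$ on $\p_-S^*M$, and prove that comparison by splitting into a compact region and the short-geodesic regime $|\eta|_h\to\infty$, where a change of variable mapping the orbit parameter onto $[0,\pi]$ exposes the factor $|\eta|_h^{2\d}\int_0^\pi(\sin s)^{-1-2\d}\,ds$. The only difference is that the paper imports the flow asymptotics $x\circ\o{\phi}_\t=|\eta|_h^{-1}\sin(\a_{(z,\xi)}(\t))+O(|\eta|_h^{-2})$ directly from \cite[Lemma 2.8]{2017arXiv170905053G}, whereas you re-derive them via the substitution $\oz=\cos\theta$ and Gr\"onwall, leaning on the same reference for the confinement/bootstrap step you flag.
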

\begin{proof}
First note that $C_{\oX}^\infty (S^*M)\cap {x^{\d}L^2(S^*\cM;d\l)}\neq \emptyset$ for $\d<0$: 
indeed, let $f\in C_c^\infty(\cM)$, implying that $u=I{f}\in C_{c,\oX}^\infty(S^*M)\subset C_{\oX}^\infty (S^*M)$.
Since $xd\l$ is a smooth measure on $S^*M$, one sees that $x^{-\d}C_{\oX}^\infty(S^*M)\subset L^2_{loc}(S^*M;d\l)$, implying the claim.
Now by \eqref{eq:santalo}, since $u\in C_{\oX}^\infty (S^*M)$, we have
\begin{align}
	\|u\|_{x^{\d}L^2(S^*\cM;d\l)}^2=\int_{S^*\cM}&|x^{-\d} u|^2d\l
{=}\int_{\p_-S^*M}{I}(|u|^2x^{-2\d})d\l_\p\\*
&=\int_{\p_-S^*M}\big|\big({I}(x^{-2\d})\big)^{1/2}u\big|^2 d\l_\p.\label{eq:weighted_x_ray}
\end{align}
The second equality is valid because $|x^{-\d}u|^2\in L^1(S^*\cM;d\l)$.
Let  ${{(z,\xi)}}=\big((0,y),\frac{dx}{x}+\eta_\a dy^\a\big)\in \p_-S^*M$ with $|\eta|_h>C_0>0$.
If $C_0$ is sufficiently large, then  \cite[Lemma 2.8]{2017arXiv170905053G} implies that $x\circ \o{\phi}_\t({{z,\xi}})=|\eta|_h^{-1}\sin(\a_{{(z,\xi)}}(\t))+O(|\eta|_h^{-2})$, 
where $\a_{{(z,\xi)}}:[0,\t_+({{z,\xi}})]\to [0,\pi]$ is a family of diffeomorphisms depending smoothly on $(z,{{\xi}})\in \p_-S^*M$, with $\p_\t\a_{{(z,\xi)}}(\t)=|\eta|_h+O(1)$, and 
$\t_+(z,{{\xi}})=|\eta|^{-1}\pi+O(|\eta|_h^{-2})$ as $|\eta|_h\to\infty$.
So
\begin{align}
	{I}(x^{-2\d})({{z,\xi}})=&
	\int_0^{\t_+({z,\xi})}x^{-1-2\d}\circ \o{\phi}_\t(z,\xi)d\t=\int_0^{\t_+({z,\xi})}\left(|\eta|_h^{-1}\sin(\a_{{(z,\xi)}}(\t))+O(|\eta|_h^{-2})\right)^{-1-2\d}d\t\\	
=&		\int_0^\pi\left(|\eta|_h^{-1}\sin(s)+O(|\eta|_h^{-2})\right)^{-1-2\d}\frac{ds}{|\eta|_h+O(1)}\\
=&		|\eta|_h^{2\d}\int_0^\pi\left(\sin(s)+O(|\eta|_h^{-1})\right)^{-1-2\d}\frac{ds}{1+O(|\eta|_h^{-1})}.
\end{align}
Since $\int_0^\pi\left(\sin(s)+O(|\eta|_h^{-1})\right)^{-1-2\d}\frac{ds}{1+O(|\eta|_h^{-1})}=a_\d+O(|\eta|_h^{-1})$ with $a_\d>0$ for $\d<0$, we find that ${I}(x^{-2\d})({{(z,\xi)}})=a_\d |\eta|_h^{2\d}+O(|\eta|_h^{-1+2\d})$ as $|\eta|_h\to\infty$.
On the other hand, if $|\eta|_{h}\leq C_0$, ${I}(x^{-2\d})$ is uniformly bounded above and below by positive constants depending on $\d$ and $C_0$.
Thus \eqref{eq:weighted_x_ray} is comparable to $\|\lg \eta\rg_h^{\d}u\|_{L^2(\p_-S^*M;d\l_\p)}=\|u\|_{\lg \eta\rg_h^{-\d}L^2(\p_-S^*M;d\l_\p)}$.
\end{proof}

\section{The 0-Geometry and 0-Pseudodifferential Calculus }
\label{ssub:stretched_spaces_and_the_0_calculus}

In this section we recall some background on the 0-geometry and the 0-calculus, which will be used throughout the paper.
The main sources are \cite{MR2941112}, \cite{MR916753} and \cite{MR1133743}, also see \cite{MR1111745}. 
Throughout the section, $M^{n+1}$ will be a compact manifold with boundary and  $(x,y^1,\dots, y^n)$ are  coordinates near a boundary point with $x$ a boundary defining function.

\subsection{The b- and 0-Tangent and Cotangent Bundles, Half Densities}

We already introduced the b-cotangent bundle $^bT^*M$ of $M$ in Section \ref{sec:geodesic_flow}. It is the dual bundle of $^bTM$, which is the bundle over $M$ whose local sections are smooth vector fields tangent to $\p M$ (denoted by $\calV_b$) and which is trivialized by $x\p_{x},\p_{y^{1}},\dots, \p_{y^n}$ locally near $\p M$. We denote by $\Omega^{1/2}_b(M)$ the bundle over $M$ with sections of the form $x^{-1/2}\nu$, $\nu\in C^\infty(M;\Omega^{1/2})$ (here $\Omega^{1/2}$ is the smooth half density bundle).
The 0-tangent bundle $^{0}TM$ is the bundle over $M$ whose local sections are smooth vector fields on $M$ vanishing at $\p M$ (denoted by $\calV_0$ as mentioned in the Introduction); it is trivialized locally near $\p M$ by $x\p_x,x\p_{y^1}\dots,x\p_{y^n} $.
Its dual bundle, ${}^0T^*M$, is trivialized locally near $\p M$ by $dx/x,dy^1/x,\dots ,dy^n/x$.
We let $\Omega_0^{1/2}(M)$ be the smooth complex line bundle over $M$ whose smooth local sections are of the form $x^{-(n+1)/2}\nu$, $\nu\in C^\infty(M;\Omega^{1/2})$.
It is trivialized near $\p M$ by  $x^{-(n+1)/2}|dxdy^1\dots dy^n|^{1/2}$ and in case $M$ is the compactification of an AH manifold $(\cM^{n+1},g)$ then $\Omega_0^{1/2}(M)$  is the geometric half density bundle, globally trivialized by $dV_g^{1/2}$.
We will also occasionally use the notation $\Omega_0^{1/2}(X)$ for $X$ a manifold with corners related to $M$ (such as $X=M^2$ or $X=M^2_0$, the 0-stretched product introduced later); in this case $\Omega_0^{1/2}(X)$ is trivialized by $\prod_j x_j^{-(n+1)/2}\nu$, $\nu\in C^\infty(X;\Omega^{1/2})$, with $x_j$ defining functions for the boundary faces of $X$.
For a manifold with corners $X$ and for ${\star}\in \{\emptyset, b,0\}$ we will write $\dot{C}^\infty(X;\Omega_{\star}^{{1}/{2}})$ for smooth sections of $\Omega_{\star}^{1/2}(X)$ whose derivatives of all orders vanish at $\p X$ and $ C^{-\infty}(X;\Omega_{\star}^{{1}/{2}}):=(\dot{C}^\infty(X;\O_{\star}^{{1}/{2}}))'$.

\subsection{Polyhomogeneous Conormal Distributions}
We will make use of spaces of functions admitting asymptotic expansions at the boundary.
 Let $E\subset \C\times \mathbb{N}_0 $ be an \textit{index set}, that is, a discrete set with the properties
 \begin{align}
 &|(s_j,p_j)|\to\infty\implies\Re(s_j)\to\infty\text{ and }\label{eq:first_assumption}\\
 \label{eq:index_sets_assumptions}
	(s_j,p_j)\in E\implies& (s_j+m,p_j-\ell)\in E, \quad m\in \mathbb{N}_0=\{0,1,\dots\}, \quad \ell=0,1,\dots,p_j.
\end{align}
If a discrete $E\subset \C\times \mathbb{N}_0$ satisfies \eqref{eq:first_assumption} we  write $\o{E}$ to denote the smallest index set containing $E$.
Now let
 $u\in {C}^{-\infty}(M)$; $u$ is \textit{polyhomogeneous conormal with index set} $E$ if it admits an asymptotic expansion in a collar neighborhood $[0,\e)_x\times \p M$ of the boundary of the form 
\begin{equation}\label{eq:polyhomogeneous}
	u\sim \sum_{(s_j,p_j)\in E}\sum_{k=0}^{p_j} x^{s_j}|\log x|^k a_{jk}(y), \quad a_{jk}\in C^\infty( \p M).
\end{equation}
If $u$ satisfies \eqref{eq:polyhomogeneous} we write $u\in \calA_{phg}^E$.
By \eqref{eq:index_sets_assumptions}, the property $u\in \calA_{phg}^E$ does not depend on the product decomposition chosen near $\p M$.
Note that the space $\calA_{phg}^E$ is invariant under differentiation by vector fields in $\calV_b(M)$ and that if  $E_1\subset E_2$ 
then $\calA_{phg}^{E_1}\subset \calA_{phg}^{E_2}$.

If $X$ is a manifold with corners with boundary hypersurfaces $X_j$, $j=1,\dots,J$, denote by $\calE=(E_1,\dots,E_J)$ a $J$-tuple of of index sets. The space of polyhomogeneous distributions $\calA_{phg}^\calE(X)$ is defined to be those which have the form \eqref{eq:polyhomogeneous} with $E$ replaced by $E_j$ near the interior of the boundary hypersurface $X_j$ for $j=1,\dots, J$ and which have product type expansions at the intersections of boundary hypersurfaces (for a more rigorous definition see \cite{MR1133743}).
We now list a few shorthand notations.
If $E$ is an index set we will write $E+\ell=\{(s+\ell,p):(s,p)\in E\}$.
The notation $\Re(E)> C$ will mean $\Re(s)>C $ for all $(s,p)\in E$ and $\Re(E)\geq C$ will mean that either $\Re(E)>C$, or $\Re(s)\geq C$ for all $(s,p)\in E$ and $E\cap (\{\Re{z}=C\}\times \{1,2,\dots\})=\emptyset$.
Thus $\Re(E)\geq 0$ implies that $u\in \calA_{phg}^E$ is bounded.
If it is known that $E\subset \R\times \mathbb{N}_0$ we will often write $E\geq C$ or $E>C$.
Whenever $u\in\calA_{phg}^\calE(X)$ is smooth down to a boundary hypersurface $X_j$ and vanishing to order $k$ there we will be replacing $E_j$ in $\calE$ by $k$: in this case $E_j\subset \mathbb{N}_0\times \{0\} $.

If $E$ is a vector bundle over $X$ the discussion above can be used to define  polyhomogeneous conormal sections, written as 
$\calA_{phg}^{\calE}(X;E)$.

\subsection{The Stretched Product}
\label{ssub:stretched}
Here we outline the construction of the 0-stretched product, which is a special case of a blow-up.
For a detailed exposition see \cite{MR2941112}, \cite{MR916753}; more generally for the blow-up construction see \cite{daomwk}.

If $M^{n+1}$ is a compact manifold with boundary, then the \textit{0-stretched product} $M_0^2:=[{M}^2;\p\Di]$ is by definition the space obtained by blowing up
the boundary of the diagonal  $\Di=\{(z,z):z\in {M}\}$ (see Figure \ref{fig:0_product}).
As a set, $M_0^2= (M^2\setminus \p\Di)\bigsqcup SN^{++}(\p \Di)$, where $SN^{++}(\p \Di)$ is the inward pointing spherical normal bundle of $\p \Di$ with fiber $SN^{++}_{(p,p)}\p\Di=\big(\big((\o{T_{p}^+M})^2/T_{(p,p)}\p\Di\big)\setminus 0\big)/\R^+$ at $(p,p)\in \p \Di$, where $T_{p}^+M$ is the inward pointing tangent space. $M_0^2$ is endowed with a natural smooth structure making it into a manifold with corners of codimension up to 3 such that the blow down map $\b_0:M_0^2\to M^2$, $\b_0\big|_{(M^2\setminus \p\Di)}=id$, $\b_0\big|_{SN^{++}_{(p,p)}\p\Di}=(p,p)$ is smooth. 
Under $\b_0$, smooth vector fields on $M^2$ tangent to $\p \Di$ lift to be smooth and tangent to the boundary faces of $M_0^2$.
The boundary face $SN^{++}(\p\Di)\subset {M_0^2}$ is called the \textit{front face} and denoted by $\ff$; we also let $\Di_0=\o{\b_0^{-1}(\Di\setminus\p \Di)}$.
The side faces are $\lf:=\o{\b_0^{-1}(\p M\times M)}$ and  $\rf:=
\o{\b_0^{-1}(M\times\p M)}$.
We will use the notation $x_{\ell}, x_r, x_f$ to refer to a defining function for $\lf$, $\rf$, $\ff$ respectively.
Moreover,  $\calA_{phg}^\calE(M_0^2)$ with $\calE=(E_\ell,E_r,E_f)$ will denote polyhomogeneous distributions on $M_0^2$ with $E_{\ell}, E_r, E_f$ index sets corresponding to  $\lf$, $\rf$, $\ff$ respectively.
Throughout the paper we will make use of the following projective coordinate systems on $M_0^2$:
if $(x,y) $ is a coordinate system near $p\in \p M$ with $x$ a boundary defining function and $(\td{x},\td{y})$ a copy of it on the right factor of $M^2$, the coordinate system
\begin{align}
  \label{eq:coord_lf}&(\td{x},\td{y},s=x/\td{x},{W}=(y-\td{y})/\td{x})\text{  is valid near }\lf\cap \ff\text{ and away from }\rf \text{, whereas}\\
   \label{eq:coord_rf}&(x,y,t=\td{x}/x,Y=(\td{y}-y)/x)\text{  is valid near }\rf\cap \ff\text{ and away from }\lf.
\end{align}
In terms of \eqref{eq:coord_lf} (resp. \eqref{eq:coord_rf}) $s$ (resp. $t$) is a defining function for $\lf$ (resp. $\rf$) and $\td{x}$ (resp. $x$) is a defining function for $\ff$; moreover,
$(s,W)$ (resp. $(t,Y)$) restrict to coordinates in the interior of the front face, smooth down to $(\ff\cap \lf)^\circ$ (resp. $(\ff\cap \rf)^\circ$).
\begin{figure}[h]
\begin{center}
	\includegraphics[scale=.4]{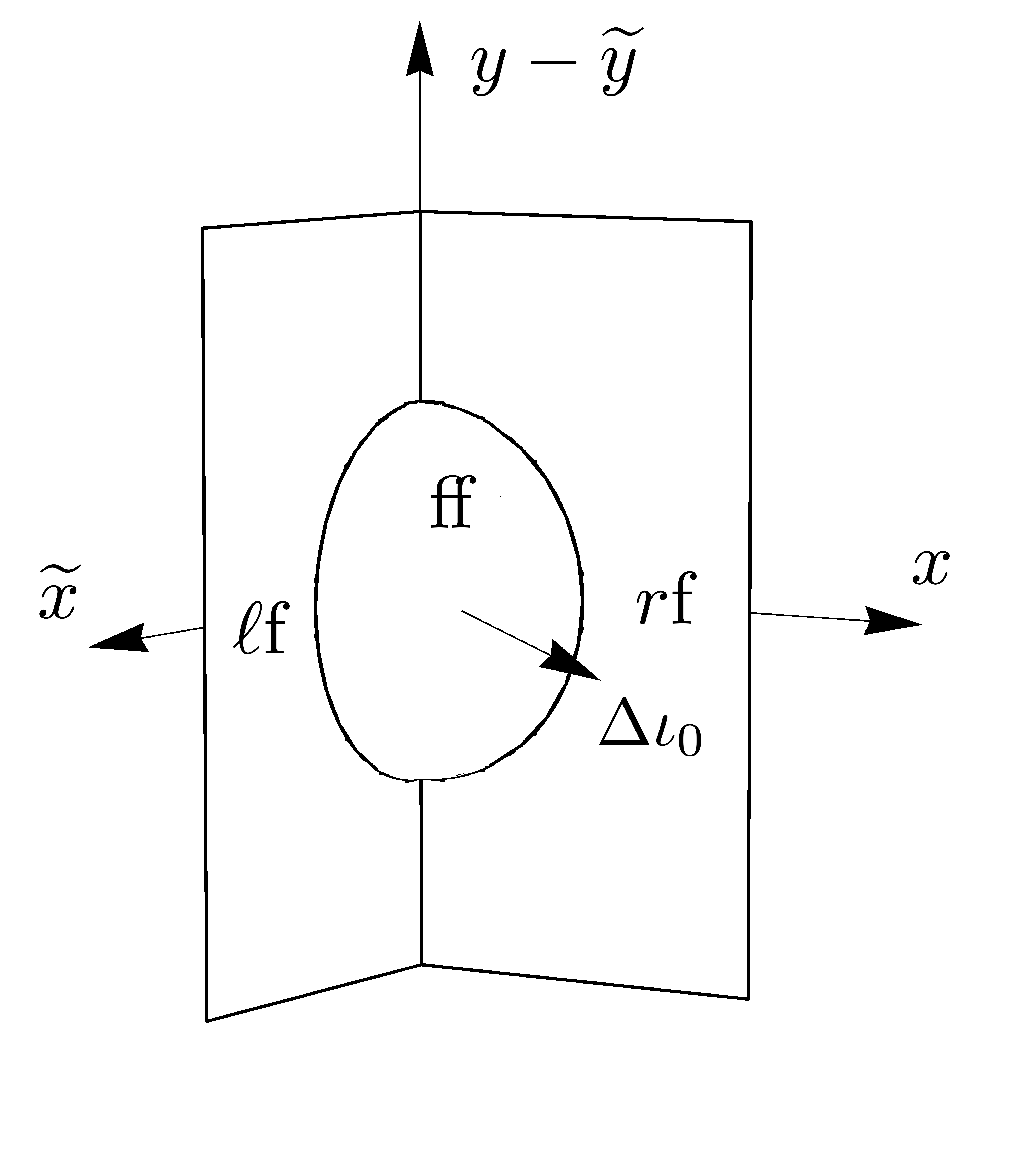}
	\caption{The 0-stretched product.}
	\label{fig:0_product}
	\end{center}
\end{figure}

The fibers of the front face carry additional structure: fix $p\in \p M$ and let ${{T_p^+M} }=\{v\in T_pM:dx(v)> 0\}$.
The subgroup $G_p $ of $GL(T_pM)$ that preserves $\o{{{T_p^+M} }}$ and fixes
 $\p (T_p^+M)$ pointwise induces an invariantly defined free and transitive action on ${T_p^+M} $.
Using linear coordinates $(u,w)=(u,w^1,\dots,w^n)$ on $T_pM$ induced by coordinates $(x,y)$ near $p$,
the action of $(a,b)\in G_p\cong \R^+\ltimes \R^n$ is given by
 $(a,b)\cdot(u,w) =(au,w+ub)$
and the group multiplication in $G_p$ is given by $(a,b)\cdot (a',b')=(aa',b'+a'b)$.
The actions of $G_p^\ell:=G_p\times Id$ and $G_p^r:=Id\times G_p$ on $ \big({T_p^+M} \big)^2$ descend to the interior of the fiber $SN^{++}_{(p,p)}\p\Di=\ff_p$, defining transitive and free actions there.
Moreover, each fiber of the front face has a canonically defined singled out point $e_p$, given by  $\p \Di_0\big|_{(p,p)}$.
Thus one obtains diffeomorphic identifications of $\ff_p^\circ$ with $G_p^\ell\cong G_p^r\cong G_p$ and $\ff_p^\circ$ has two group structures, both canonically isomorphic to $G_p$.
The diffeomorphisms $f_p^\ell, f_p^r:\ff_p^\circ\to G_p$ obtained this way are given (using linear coordinates $(u,w),(\td{u},\td{w})$ on $(T_p^+M)^2$) as $f_p^\ell\big([(u,w),(\td{u},\td{w})]\big)=\big(u/\td{u},(w-\td{w})/\td{u}\big)$, $f_p^r\big([(u,w),(\td{u},\td{w})]\big)=\big(\td{u}/u,(\td{w}-w)/{u}\big)$.
Those diffeomorphisms have equivariance properties: for $q\in G_p$ write $q_l=(q, id)\in G_p^\ell$ and $q_r=(id, q)\in G_p^r$ to obtain as in \cite[\sectionsymbol 3]{MR916753}
that for $\omega \in \ff_p^\circ$ one has 
\begin{equation}
\begin{aligned}\label{eq:equivariance} 
	f_p^\ell(q_l\cdot \omega)=q\cdot f_p^\ell(\omega),& \quad f_p^r(q_r\cdot \omega)=q\cdot f_p^r(\omega)\\
f_p^\ell(q_r\cdot \omega)=f_p^\ell(\omega)\cdot q^{-1}, & \quad f_p^r(q_l\cdot \omega)=f_p^r(\omega)\cdot q^{-1}.
\end{aligned}
\end{equation}

If  $g$ is an AH metric on $\cM$ and $p\in \p M$ one obtains a canonical hyperbolic metric $h_p$ of curvature $-1$ on $T_p^+M$.
If $x$ is a boundary defining function for $\pM$ it is given  by 
\begin{equation}\label{eq:hyperbolic_metric}
	h_p\big|_v:=(dx(v))^{-2}\o{g}\big|_p,\quad v\in {T_p^+M}, 
\end{equation}
where $\o{g}=x^2 g$ and the inner product $\o{g}\big|_p$ on $T_pM$ is naturally identified with an inner product on $T_v(T_pM)$ for any $v\in T_p^+M$.
One checks that \eqref{eq:hyperbolic_metric} does not depend on the choice of the boundary defining function $x$, and with an appropriate choice of coordinates $(x,y)$ near $p$ one can always arrange that $h_p=u^{-2}({du^2+|dw|^2})$ in terms of induced linear coordinates $(u,w)$ on $T^+_pM$.
The metric $h_p$ can be appropriately pulled back to $\ff_p^\circ$ in two ways:
since the action of $G_p$ on $T_p^+M$ is free and transitive, given $v\in T_p^+M$ one can define a diffeomorphism 
\begin{equation}
  f_p^v:G_p\to T_p^+M, \quad G_p\ni q\mapsto q\cdot v\in T_p^+M.
\end{equation}
Thus for each $v$ one obtains a hyperbolic metric $(f_p^v)^*h_p$ on $G_p$ which is right invariant with respect to the group structure of $G_p$, as can be checked in coordinates.
Using this right invariance and the fact that for any two $v,v'\in T_p^+M$ there exists $\td{q}\in G_p$ such that $v'=\td{q}\cdot v$, one checks that the metric $(f_p^v)^*h_p=:h_{G_p}$ is in fact independent of $v$. 
Hence   $h_p^\ell:=(f_p^\ell)^*h_{G_p}$ and $h_p^r:=( f_p^{r})^*h_{G_p}$ are hyperbolic metrics on $\ff_p^\circ$, which are invariant with respect to the left action of $G_p^r$, $G_p^\ell$ respectively, by  \eqref{eq:equivariance}.
Again with an appropriate choice of coordinates we can arrange that
$h_p^\ell=s^{-2}(ds\,^2+|d{W}|^2)$ in terms of coordinates $(s,W)$ on $\ff_p$ as in \eqref{eq:coord_lf}, and similarly that 
 in terms of $(t,Y)$  in \eqref{eq:coord_rf}, $h_p^r=t^{-2}(dt+|dY|^2)$.

\subsection{The 0-Calculus}

We recall the definition and properties of the 0-calculus of pseudodifferential operators that we need later.
As already mentioned, 0-differential operators of order $m\in \mathbb{N}_0$, denoted by $\Diff_0^m(M)$, are those differential operators which can be written as finite sums of at most $m$-fold products of vector fields in $\calV_0$: in coordinates $(x,y) $ near $\p M$, $P\in\Diff_0^m(M)$ can be written as
\begin{equation}\label{eq:zero_differential}
	P=\sum_{j+|{\boldsymbol{\a}}|\leq m} a_{j,{\boldsymbol{\a}}}(x,y)(x\p_x)^j(x\p_y)^{\boldsymbol{\a}}, \quad a_{j,{\boldsymbol{\a}}}\in C^\infty,
\end{equation}
using multi-index notation.

Pseudodifferential operators in the 0-calculus are defined via the lifts  of their Schwartz kernels to $M_0^2$.
As shown in \cite{MR916753}, smooth sections of $\smsec({M}^2)\cong \pi_l^*\smsec(M)\otimes\pi_r^*\smsec(M)$ lift via $\b_0$ to smooth sections of $\smsec(M_0^2)$.\footnote{For any product  space, $\pi_l, \pi_r$ will generically denote projection onto the left and right factor respectively.}
Thus for the Schwartz kernel $\k_P\in C^{-\infty}({M}^2;\smsec)$ of an operator $P:\dot{C}^\infty({M};\O_0^{1/2})\to C^{-\infty}({M};\O_0^{1/2})$ we have $\b_0^*\k_P\in C^{-\infty}(M_0^2;\smsec)$.
The {small 0-calculus} of order $m$, denoted by $\Psi_{0}^m({M})$, consists of operators  whose Schwartz kernel $\k_P$ satisfies $\b_0^*\k_P\in\calA_{phg}^\calE I^m(M_0^2,\Di_0;\smsec)$ with $\calE=\big(\emptyset,\emptyset,\o{\{(0,0)\}}\big)$. 
 This means by definition that $\b_0^*\k_P$ is a section of $\Omega_0^{1/2}(M_0^2)$ conormal of order $m$ to $\Di_0$, smooth down to the front face away from $\Di_0$, and vanishing to infinite order at the side faces.
Recall that a distribution $u$ on a manifold with corners $X^d$ is said to be 
  conormal of order $m\in \R$ with respect to an interior p-submanifold $Y^{d-s}$ (see \cite{MR1133743})  if whenever $Y$ is given locally as the zero set $\{y'=0\}$ in terms of coordinates $(x,y',y'')\in [0,\infty)^k\times \R^s\times \R^{d-k-s}$ for $X^d$,
we have  $u(x,y)=\int_{\R^s} e^{iy'\cdot \xi'}a(x,y'',\xi')d\xi'.$
Here $a\in S^{m'}\big(([0,\infty)^k\times \R^{d-s-k})\times \R^s\big)$ is a symbol  of order $m'=m+d/4-s/2$ (following H\"ormander's convention, see \cite[\sectionsymbol 18.2]{MR2304165}); by definition
 the symbol $a$ satisfies symbol estimates
  $$|\p_x^{\boldsymbol{\a}} \p_{y''}^{\boldsymbol{\b}} \p_{\xi'}^{\boldsymbol{\g}} a(x,y'',\xi')|\leq C_{\boldsymbol{\a},\boldsymbol{\b},\boldsymbol{\g}}\lg\xi'\rg^{{m'-|\boldsymbol{\g}|}},$$
where  $\lg \cdot \rg=(1+|\cdot|^2)^{1/2}$, and ${\boldsymbol{\a}}$, ${\boldsymbol{\b}}$, $\boldsymbol{\g}$
 are multi-indices.
We write  $u\in I^m(X,Y)$ for such conormal distributions; conormal sections of a bundle are defined similarly.
To any operator $P\in \Psi_0^m(M)$ corresponds a principal symbol encoding the leading conormal singularity at $\Di_0$.
This symbol is a symbolic section of a bundle over $N^*\Di_0,$ but as it turns out, using the canonical identification of $N^*\Di_0\leftrightarrow {}^0T^*M$,  it can be canonically identified with a symbol $\s_0^m(P)\in S^{\{m\}}({}^0T^*M):= S^m({}^0T^*M)/S^{m-1}$ (see \cite{MR916753}, \cite{MR1965451} for details), which is called the \textit{principal symbol of $P$}.
Provided there exists a symbol $a\in S^{\{-m\}}({}^0T^*M)$ such that $\sigma_0^m (P)\cdot a\equiv 1$, $P\in \Psi_0^m(M)$ will be called \textit{elliptic}.
Below we set $\Psi_0^{-\infty}({M}):=\bigcap_{m\in \R}\Psi^m_0({M})$.

The operators in the large 0-calculus have kernels with  $\b_0^*\k_P\in\calA_{phg}^\calE I^m(M_0^2,\Di_0;\smsec)$, for  $\calE=(E_\ell,E_r,E_f)$ and $m\in \R$ and are denoted by  $\Psi_0^{m,\calE}(M)$. 
The definition means that $\b_0^*\k_P\big|_{(M_0^2)^\circ}\in I^m((M_0^2)^\circ,\Di_0^\circ;\smsec)$ and it has an asymptotic expansion of the form \eqref{eq:polyhomogeneous} at the boundary  faces with index sets determined by $\calE$, with the coefficients $a_{jk}$ corresponding to the side faces being smooth in their interior and those corresponding to $\ff$ being  conormal of order $m+1/4$ with respect to $\ff\cap \p \Di_0$ (the change in the order of conormality is due to H\"ormander's convention).\footnote{Our definition of the large calculus is consistent with the one given in \cite{MR916753} (except for the fact that we demand that the kernels be polyhomogeneous conormal to the boundary faces of the stretched product and not merely conormal, see \cite{MR916753}) but it differs from the one in \cite{MR1133743}, where the author defines the large edge calculus, of which the 0-calculus is a special case, as $\Psi_e^{m,\calE}=\Psi_e^m+\Psi_e^{-\infty,\calE}$.}
The subspace $\Psi_0^{-\infty,\calE}({M})$ consists of
 operators whose kernels are smooth in $(M_0^2)^\circ$ with polyhomogeneous expansions at the boundary faces.
We will often write $\Psi_0^{m,E_\ell,E_r}({M})$ to imply that $E_f=\o{\{(0,0)\}}$.
In this case, $\Psi_0^{m,E_\ell,E_r}(M)=\Psi_0^m(M)+\Psi_0^{-\infty, E_\ell,E_r}(M).$
 The rest of the shorthand notations for index sets outlined earlier will apply for $\Psi_0^{m,\calE}$; for instance $P\in \Psi_0^{m,a,E_r,E_f}(M)$, $a\in \mathbb{N}_0$, indicates that $\b_0^*\, \k_P$ is smooth near the interior of $\lf$ and vanishes at  $\lf$ to order $a$.

To clarify the action of $P$ in terms of coordinates, 
locally near $\p M $ write $\g_0=|x^{-n-1}dxdy|^{1/2}$ $\in C^\infty(M;\Omega^{1/2}_0)$,
 $\k_P=K_P\cdot \pi_l^*\g_0\otimes \pi_r^*\g_0$ and use the notation $\b_0^*\,K_P^\ell$  for $\b_0^*\,K_P$ expressed in terms of coordinates  $(\td{x},\td{y},s,{W})$ on $M_0^2$.
Then for $P\in\Psi_0^{m,\calE}(M)$ and $f\in \dot{C}^\infty(M)$ we have
	\begin{equation}\label{eq:kernel_coordinates}
		P(f\cdot \g_0)(x,y)=\int \b_0^*\,K_P^\ell\Big(\frac{x}{s},y-\frac{{W}}{s}x,s,{W}\Big)f\Big(\frac{x}{s},y-\frac{{W}}{s}x\Big)
    \frac{|dsd{W}|}{s}\cdot \g_0.
	\end{equation}

Operators in the large 0-calculus can be composed under compatibility assumptions.
The following proposition follows from the proof of \cite[Theorem 3.15]{MR1133743} with a change in normalizations. As stated below it can also be found in \cite{Albin}.

\begin{proposition}\label{prop:composition}
	Let $P\in \Psi_0^{m,\calE}(M)$, $P'\in \Psi_0^{m'\!,\calF}(M)$.
	If $\Re(E_r+F_\ell)>n$ then the composition $P\circ P'$ is defined and $P\circ P'\in \Psi_0^{m+m',\calW}(M)$, where $\calW$ is given by
	\begin{equation}
		\begin{tabular}{c c c}
			$W_\ell=(F_\ell+E_f)\o{\cup}E_\ell$, & $W_{{r}}=(E_r+F_f)\o{\cup}F_{r}$, &$W_f=(E_\ell+F_{r})\o{\cup}(E_f+F_f)$;
		\end{tabular}
	\end{equation}
	here the sum and extended union respectively of the index sets $E$, $E'$ are given by
	\begin{align}
	 	E+E'=&\{(s,p)+(s',p'):(s,p)\in E, \; (s',p')\in E'\},\\
	 	E\o{\cup}E'=&E\cup E'\cup\{(s,p+p'+1):\text{there exist }(s,p)\in E, (s,p')\in E'\}.
	 \end{align}
	 	
\end{proposition}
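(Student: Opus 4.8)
The plan is to follow the proof of \cite[Theorem 3.15]{MR1133743}, realizing the composition as the pushforward of a product of pullbacks on the \emph{triple} $0$-stretched product, and to adapt the normalization conventions of that reference (the order shift $m'=m+d/4-s/2$ in H\"ormander's convention, together with the passage between the $b$- and the geometric $0$-half-density bundles) to the present setting.

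First I would introduce the triple $0$-stretched product $M_0^3$, obtained from $M^3$ by blowing up the boundaries of the triple and of the three partial diagonals in the order prescribed in \cite{MR916753}, \cite{MR1133743}. Writing $z,z',z''$ for the three factors, this space carries three stretched projections to $M_0^2$: let $\pi_C$ project onto the $(z,z')$ factor, $\pi_F$ onto the $(z',z'')$ factor, and $\pi_S$ onto the $(z,z'')$ factor. The essential structural input, established in those references, is that each of these lifts from $M^3$ to a \emph{b-fibration} $M_0^3\to M_0^2$; this is what makes the pushforward and pullback theorems for polyhomogeneous conormal distributions applicable.

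The Schwartz kernel of $P\circ P'$ is then $\b_0^*\k_{P\circ P'}=(\pi_S)_*\big(\pi_C^*\,\b_0^*\k_P\cdot \pi_F^*\,\b_0^*\k_{P'}\big)$, the center factor $z'$ being integrated out. By the pullback theorem the two lifts are polyhomogeneous conormal on $M_0^3$, conormal of orders $m$ and $m'$ to two lifted diagonals that meet cleanly over the lifted triple diagonal; their product is conormal with respect to the union of these two diagonals, and since $\pi_S$ is a fibration transverse to one of them and resolving the center diagonal, the pushforward is conormal of order $m+m'$ at the composed diagonal $\Di_0\subset M_0^2$, which supplies the $\Psi_0^{m+m'}$ part. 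For the behavior at the side and front faces I would apply Melrose's pushforward theorem: the pushforward of a polyhomogeneous $b$-density under a b-fibration is polyhomogeneous, with index set at each boundary hypersurface of the target equal to the extended union of the sums of the index sets over the faces of $M_0^3$ mapping onto it. Tracking which faces of $M_0^3$ map to $\lf$, $\rf$, and $\ff$ of the target, and summing the contributions carried by $\b_0^*\k_P$ and $\b_0^*\k_{P'}$, reproduces exactly the index sets $\calW$ in the statement.

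The convergence hypothesis $\Re(E_r+F_\ell)>n$ is precisely the integrability condition needed for the pushforward to be defined. The boundary face of $M_0^3$ that $\pi_S$ maps into the interior of $M_0^2$ is the one on which the center factor $z'$ approaches $\p M$ while $z,z''$ stay interior; there the integrand carries the index set $E_r+F_\ell$, coming respectively from $\rf$ of $P$ and $\lf$ of $P'$. The shift by $n$ is the discrepancy between the geometric $0$-density and the $b$-density used in the pushforward theorem: on the $(n+1)$-dimensional $M$ the full $0$- and $b$-density bundles satisfy $\Omega_0=x^{-n}\Omega_b$ up to a smooth positive factor, so converting the integrand to a $b$-density lowers the effective exponent at that face by $n$, and positivity there becomes $\Re(E_r+F_\ell)-n>0$. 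I expect the main obstacle to lie in the combinatorial verification that the three projections are b-fibrations with the stated face correspondences, and in the bookkeeping matching of index sets and conormal orders once the edge normalizations of \cite{MR1133743} are rewritten in the $0$-half-density conventions used here.
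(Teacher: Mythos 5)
Your proposal is correct and takes essentially the same approach as the paper: the paper gives no independent argument but cites the proof of \cite[Theorem 3.15]{MR1133743} ``with a change in normalizations,'' which is exactly the triple stretched product / b-fibration / pullback--pushforward argument you outline. Your normalization bookkeeping (the conversion $\Omega_0 = x^{-n}\Omega_b$ explaining why the integrability condition at the face where the middle factor meets $\p M$ reads $\Re(E_r+F_\ell)>n$) is precisely the ``change in normalizations'' the paper alludes to.
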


We also state results regarding mapping properties on polyhomogeneous functions and on half densities in Sobolev spaces.
For a proof of the following, see  \cite{Albin}, \cite{MR1111745}, \cite{MR1133743}:

\begin{proposition}\label{prop:polyhomogeneous_mapping}
	Let $u\in \calA_{phg}^F(M;\O_0^{1/2})$ and $P\in \Psi_0^{m, \calE}(M)$, $m\in \R$. If $\Re(E_r+F)>n$
	then $Pu\in \calA_{phg}^{F'}(M;\O_0^{1/2})$, where $F'=E_\ell\o{\cup}(E_f+F)$.

\end{proposition}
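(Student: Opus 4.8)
The plan is to deduce the mapping property from the composition theorem (Proposition \ref{prop:composition}) by realizing the half-density $u$ as the Schwartz kernel of a rank-one smoothing operator. Fix any nonzero $\psi\in \dot{C}^\infty(M;\O_0^{1/2})$ and let $U$ be the operator $Uf=u\,\lg f,\psi\rg$, where $\lg f,\psi\rg=\int_M f\o\psi$ makes sense because $f\o\psi$ is a full $0$-density. Then $U:\dot{C}^\infty(M;\O_0^{1/2})\to C^{-\infty}(M;\O_0^{1/2})$ has Schwartz kernel $\k_U=\pi_l^*u\otimes \pi_r^*\o\psi\in C^{-\infty}(M^2;\O_0^{1/2}(M^2))$. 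The first step is to place $U$ in the large $0$-calculus and identify its index family, after which $P\circ U$ is computed by Proposition \ref{prop:composition} and $Pu$ is read off from the resulting kernel.

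Next I would compute the index family of $U$. Since $\k_U$ has separated variables it is smooth in $(M^2)^\circ$, so $\b_0^*\k_U$ is conormal of order $-\infty$ to $\Di_0$ and $U$ lies in some $\Psi_0^{-\infty,\calF}(M)$ once the boundary behaviour is pinned down. Working in the projective coordinates \eqref{eq:coord_lf} and \eqref{eq:coord_rf} one checks that $\pi_l^*u$ lifts with index $F$ at both $\lf$ and $\ff$ and is smooth at $\rf$, while $\pi_r^*\o\psi$ vanishes to infinite order at $\rf$ and $\ff$ and is smooth at $\lf$; the half-density weights are absorbed by the canonical isomorphism $\b_0^*(\pi_l^*\O_0^{1/2}\otimes\pi_r^*\O_0^{1/2})\cong\O_0^{1/2}(M_0^2)$. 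Multiplying the two lifts and using that adding the empty index set to any index set yields the empty index set, I get $\b_0^*\k_U\in\calA_{phg}^{\calF}(M_0^2;\O_0^{1/2})$ with $\calF=(F_\ell,F_r,F_f)=(F,\emptyset,\emptyset)$, so that $U\in\Psi_0^{-\infty,(F,\emptyset,\emptyset)}(M)$.

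With this in hand I would apply Proposition \ref{prop:composition} to $P\circ U$ (viewing $U\in\Psi_0^{m',\calF}(M)$ for an arbitrary finite $m'$, which is legitimate since smoothing kernels are conormal of every order to $\Di_0$; the composite is then smoothing). The compatibility hypothesis reads $\Re(E_r+F_\ell)=\Re(E_r+F)>n$, which is exactly the standing assumption. Using the symmetry of $+$ and $\o{\cup}$, the resulting index family $\calW$ is
\begin{equation}
W_\ell=(F+E_f)\o{\cup}E_\ell=F',\qquad W_r=(E_r+\emptyset)\o{\cup}\emptyset=\emptyset,\qquad W_f=(E_\ell+\emptyset)\o{\cup}(E_f+\emptyset)=\emptyset,
\end{equation}
so $P\circ U\in\Psi_0^{-\infty,(F',\emptyset,\emptyset)}(M)$. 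On the other hand $PUf=(Pu)\,\lg f,\psi\rg$, so the kernel of $P\circ U$ equals $\pi_l^*(Pu)\otimes\pi_r^*\o\psi$. Evaluating near $\lf$ at a right-hand interior point where $\o\psi\neq 0$, the index set $F'$ of $P\circ U$ at $\lf$ transfers to $Pu$, yielding $Pu\in\calA_{phg}^{F'}(M;\O_0^{1/2})$, smooth in $\cM$; this is the claim.

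The substantive input is Proposition \ref{prop:composition}, whose proof carries the analytic weight; within the present argument the only delicate points are the verification of $\calF=(F,\emptyset,\emptyset)$ in the projective coordinates and the bookkeeping of the $0$-half-density weights across the blow-down, together with the observation that the threshold $\Re(E_r+F)>n$ needed for $P\circ U$ to be defined coincides with the convergence condition in the statement. I would expect the main obstacle, were one to argue directly rather than by composition, to be the same phenomenon seen through Melrose's pushforward theorem: one forms $\b_0^*\k_P\cdot\b_0^*\pi_r^*u$ (conormal of order $m$ to $\Di_0$, with index family $(E_\ell,E_r+F,E_f+F)$) and pushes it forward along the left projection $M_0^2\to M$, and the crux is showing that $\rf$, which fibers over the interior of $M$, is integrated out without contributing to the boundary expansion while $\lf$ and $\ff$ contribute $E_\ell$ and $E_f+F$ respectively; this is precisely what $\Re(E_r+F)>n$ guarantees.
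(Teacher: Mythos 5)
Your proof is correct, but it takes a genuinely different route from the paper — which in fact gives no proof of Proposition \ref{prop:polyhomogeneous_mapping} at all, deferring to \cite{Albin}, \cite{MR1111745}, \cite{MR1133743}. In those references the statement is proved by exactly the pullback--pushforward argument you sketch in your closing paragraph: one lifts $\k_P\cdot\pi_r^*u$ to $M_0^2$, notes it is conormal to $\Di_0$ and polyhomogeneous with index family $(E_\ell,E_r+F,E_f+F)$, and pushes forward along the left projection (a b-fibration); $\lf$ and $\ff$ map to $\p M$ and contribute $E_\ell$ and $E_f+F$, combined by extended union where exponents collide, while $\rf$ is integrated out, the hypothesis $\Re(E_r+F)>n$ being precisely the integrability threshold forced by the $0$-density weight $x_r^{-n-1}$ there. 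What you actually do instead — encode $u$ as the Schwartz kernel of the rank-one operator $Uf=u\,\lg f,\psi\rg$, verify $U\in\Psi_0^{-\infty,(F,\emptyset,\emptyset)}(M)$ from the separated-variables structure of $\k_U$, compose, and read the expansion of $Pu$ off the index family $\calW=(F',\emptyset,\emptyset)$ of $P\circ U$ at a right-hand interior point where $\psi\neq 0$ — is a clean reduction of Proposition \ref{prop:polyhomogeneous_mapping} to Proposition \ref{prop:composition}, and your index-set bookkeeping ($W_\ell=(F+E_f)\,\o{\cup}\,E_\ell=F'$, $W_r=W_f=\emptyset$) is right. As for what each approach buys: yours is self-contained relative to the paper's stated toolkit, needing nothing beyond Proposition \ref{prop:composition} and elementary facts about lifts of polyhomogeneous functions from the two factors; the cited proof is logically lighter, since the composition theorem is itself established by a pushforward on a triple space, so you are invoking a strictly stronger black box to obtain what is essentially a special case of the machinery underneath it. One point you should make explicit to close a small circularity: writing $\k_{P\circ U}=\pi_l^*(Pu)\otimes\pi_r^*\o{\psi}$ presupposes that $Pu$ is already defined, and this is where $\Re(E_r+F)>n$ enters a second time — for $z\in\cM$ the scalar integrand against the squared right half-density behaves like $x_r^{\Re(E_r+F)-n-1}$ up to logarithmic factors near $\rf$, so the integral defining $Pu(z)$ converges absolutely exactly under this hypothesis, after which Fubini yields the kernel identity and your restriction argument at the interior of $\lf$ is legitimate.
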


The mapping properties of the large 0-calculus in terms of weighted Sobolev half densities, denoted by $x^{\d}H_0^s(M;\O_0^{1/2})$, will be important later.
We remark that $C_c^\infty (M;\O_0^{1/2})$ (hence also $\dot{C}^\infty (M;\O_0^{1/2})$) is dense in $x^{\d}H_0^s(M;\O_0^{1/2})$ for $s\geq 0$ (see  \cite[Lemma 3.9]{lee2006fredholm}). Also, the inclusion $x^{\d'} H_0^{m'}(M;\O_0^{1/2})\hookrightarrow x^\d H_0^m(M;\O_0^{1/2})$ is compact provided $m'>m$ and $\d'>\d$.
Proposition \ref{prop:boundedness} below follows from the proofs of Corollary 3.23 and Theorem 3.25 in  \cite{MR1133743} upon taking into account the different conventions regarding the definition of operators in the large 0-calculus and the different densities on which they act:

\begin{proposition}\label{prop:boundedness}
Let $P\in \Psi_0^{m,\calE}(M)$, $m\in \R$. Provided $s\in \R$, $\Re(E_r)>n/2-\d$, $\Re(E_f)\geq\d'-\d$ and  $\Re(E_\ell)>\d'+n/2$ 
one has that
\begin{equation}\label{mapping_property}
	P:x^\d H_0^s(M;\O_0^{1/2})\to x^{\d'} H_0^{s-m}(M;\O_0^{1/2})	
\end{equation}
is bounded.
In particular, if $m<0$, $\Re(E_r)>n/2-\d$, $\Re(E_f)>0$ and  $\Re(E_\ell)>\d+n/2$ then $P:x^\d H_0^s(M;\O_0^{1/2})\to x^{\d} H_0^{s}(M;\O_0^{1/2})	$ is compact.
\end{proposition}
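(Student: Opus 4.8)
The plan is to reduce \eqref{mapping_property} to an unweighted $L^2$ estimate, then split the operator into a part supported near the lifted diagonal $\Di_0$ and a residual part, bounding these by the symbolic calculus and by a \emph{weighted} Schur test respectively; the compactness claim then follows by a soft argument. Multiplication by $x^\d$ identifies $H_0^s(M;\O_0^{1/2})$ with $x^\d H_0^s(M;\O_0^{1/2})$ up to equivalent norms, so \eqref{mapping_property} is equivalent to boundedness of $\tilde P:=x^{-\d'}Px^{\d}:H_0^s(M;\O_0^{1/2})\to H_0^{s-m}(M;\O_0^{1/2})$. Because the lifts to $M_0^2$ of the left and right boundary defining functions are smooth positive multiples of $x_\ell x_f$ and $x_r x_f$, this conjugation multiplies $\b_0^*\k_P$ by $x_\ell^{-\d'}x_r^{\d}x_f^{\,\d-\d'}$, whence $\tilde P\in\Psi_0^{m,\calE'}(M)$ with $\calE'=(E_\ell-\d',E_r+\d,E_f+\d-\d')$; the hypotheses say precisely that $\Re(E_\ell-\d'),\Re(E_r+\d)>n/2$ and $\Re(E_f+\d-\d')\ge0$. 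Conjugating further by elliptic self-adjoint $\Lambda^{r}\in\Psi_0^{r}(M)$ in the small calculus (these vanish to infinite order at the side faces, so by Proposition \ref{prop:composition} the composite has unchanged index sets and is defined), I reduce the claim to the case $s=0$, $m=0$, i.e.\ to $L^2(M;\O_0^{1/2})$-boundedness of an order-zero operator. Thus it suffices to show: \emph{every $Q\in\Psi_0^{0,\calE}(M)$ with $\Re(E_\ell),\Re(E_r)>n/2$ and $\Re(E_f)\ge0$ is bounded on $L^2(M;\O_0^{1/2})$.}

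Next I would split $Q=A+R$ using $\chi\in C^\infty(M_0^2)$ equal to $1$ near $\Di_0$ and supported away from $\lf\cup\rf$ (possible since $\Di_0$ meets only $\ff$): $\k_A=\chi\,\b_0^*\k_Q$ and $\k_R=(1-\chi)\b_0^*\k_Q\in\Psi_0^{-\infty,\calE}(M)$. The diagonal part $A$ lies in the small calculus with $\Re(E_f)\ge0$, and I would bound it on $L^2$ by H\"ormander's square-root argument: choosing $C>\sup|\sigma_0^0(A)|^2$, the operator $C-A^*A$ is elliptic of order $0$ with nonnegative principal symbol and admits a square root $B$ in the small calculus modulo a remainder of lower order; iterating, the remainder eventually has a continuous kernel that vanishes to infinite order at the side faces (by the support of $\chi$) and is bounded at $\ff$, hence is $L^2$-bounded by the ordinary Schur test, and then $\|Au\|^2\le C\|u\|^2+|\langle(\text{remainder})u,u\rangle|$ gives the bound. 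Here the side faces cause no difficulty because $\chi$ forces rapid vanishing there; the subtle threshold appears only for $R$.

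\textbf{The residual estimate is the heart of the matter.} Since $R$ is smooth across the diagonal it suffices to bound it on $L^2(M;\O_0^{1/2})$. The naive unweighted Schur test only succeeds under the stronger hypotheses $\Re(E_\ell),\Re(E_r)>n$; the improvement to the sharp $n/2$ is an $L^2$-manifestation of the Kunze--Stein phenomenon, reflecting that the fibers of $\ff$ carry a hyperbolic metric and that the $L^2$-spectrum of the Laplacian on $\H^{n+1}$ begins at $n^2/4$. Concretely I would use the generalized Schur test with the geometric weight $x^{n/2}$. Writing the reduced kernel $K$ of $R$ with respect to $dV_g$ (which in the $\O_0^{1/2}$-trivialization is exactly the polyhomogeneous coefficient of $\b_0^*\k_R$), a computation in the projective coordinates \eqref{eq:coord_rf} shows that, with $x=x(z)$, $\tx=x(z')$ and $z$ fixed,
\[
\int_{M} |K(z,z')|\,\tx^{\,n/2}\,dV_g(z')\le C\,x^{n/2},
\]
where convergence at $\rf$ requires precisely $\Re(E_r)>n/2$ while the prefactor is controlled by $\Re(E_f)\ge0$; the transposed estimate in the coordinates \eqref{eq:coord_lf} requires $\Re(E_\ell)>n/2$. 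The weighted Schur lemma with weight $x^{n/2}$ then yields $L^2$-boundedness of $R$, the patching with the diagonal-free interior region being routine.

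Combining the two pieces proves the displayed $L^2$ claim, and unwinding the reductions of the first paragraph gives \eqref{mapping_property} for all $s\in\R$. For the compactness statement, suppose $m<0$ and the inequalities are strict with $\Re(E_f)>0$. Then the strictness leaves room to choose $\d'\in(\d,\d+\e]$ small enough that $\Re(E_r)>n/2-\d$, $\Re(E_f)\ge\d'-\d$ and $\Re(E_\ell)>\d'+n/2$ all hold, so the boundedness just proved gives $P:x^\d H_0^s(M;\O_0^{1/2})\to x^{\d'}H_0^{s-m}(M;\O_0^{1/2})$; since $m<0$ and $\d'>\d$, the inclusion $x^{\d'}H_0^{s-m}(M;\O_0^{1/2})\hookrightarrow x^\d H_0^{s}(M;\O_0^{1/2})$ is compact, and the composition is the desired compact map. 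The one genuinely delicate point in the whole argument is the sharp $n/2$ threshold in the residual estimate, that is, choosing the correct weight so that the critical exponential growth of the hyperbolic fibers is split evenly between the two factors.
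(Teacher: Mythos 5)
Your plan is correct and is essentially the argument the paper itself relies on: the paper gives no proof but defers to Corollary 3.23 and Theorem 3.25 of \cite{MR1133743}, and your route --- conjugating by $x^{\d}$, $x^{\d'}$ to shift the index sets to $(E_\ell-\d',\,E_r+\d,\,E_f+\d-\d')$ and reduce to an unweighted $L^2$ statement, splitting off the conormal singularity at $\Di_0$ (H\"ormander's square-root trick, with the remainder killed by the support of $\chi$) from the residual part, bounding the latter by a Schur test with weight $x^{n/2}$ (which is exactly what produces the $n/2$ thresholds at $\lf$, $\rf$ and the condition $\Re(E_f)\geq\d'-\d$ at $\ff$), and getting compactness from the strict inequalities plus the compact inclusion $x^{\d'}H_0^{s-m}(M;\O_0^{1/2})\hookrightarrow x^{\d}H_0^{s}(M;\O_0^{1/2})$ --- is the same as in that reference. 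Two standard glosses to flag: truly invertible elliptic $\Lambda^r$ of all orders do not exist in the small 0-calculus (inverses of elliptic 0-operators lie in the large calculus), so the reduction to $s=m=0$ must be run via the usual commutator/parametrix argument for integer $s$ followed by interpolation and duality; and in the weighted Schur test any $\e$-loss used to absorb logarithms must be placed only on the side-face exponents, which is legitimate because the paper's convention $\Re(E_f)\geq\d'-\d$ excludes log terms on the critical line at $\ff$.
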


\subsection{The Model Operator}\label{ssec:model_operator_background}

Given $p\in \p M$, an operator $P\in\Psi_0^{m,\calE}(M)$ with $\Re(E_f)\geq 0$ determines an invariantly defined operator $N_p(P)$ on $T_p^+M$, which we call the \textit{model operator}\footnote{As already mentioned, the more common name for the model operator is normal operator. Despite not following the usual convention for its name, we maintain the traditional notation $N_p$.};
it is closely related to the group structures on $\ff_p$ and captures the 0-th order behavior of the Schwartz kernel of $P$ at the front face.
Consider  neighborhoods $\calU'\subset T_pM$ and $\calU\subset M$
 of $0$ and $p$ respectively, 
 and diffeomorphism
   $\phi:\calU'\to \calU $ { with }$\phi(0)=p, $ $ d\phi\big|_0=Id,${ and }$\phi(T_p\p M)\subset\p M$.
Also let $R_r:{T_p^+M} \to {T_p^+M} $, $r\in (0,\infty)$, be the canonical radial action.
If $P\in\Diff_0^m(M)$ and $f\in C_c^\infty({T_p^+M} )$ the model operator is  defined by 
\begin{equation}\label{eq:model_operator}
	N_p(P)f=\lim_{r\to 0}R_r^*\phi^*P(\phi^{-1})^*R_{1/r}^* f.
\end{equation}
As shown in \cite{MR916753} $N_p(P)$ is independent of the choice of $\phi$ and
 given by freezing the coefficients of $P$ at $p$.
For $P\in\Psi_0^{m,\calE}(M)$ we still define $N_p(P)$ by \eqref{eq:model_operator}, but with $f\in C_c^\infty({T_p^+M} ;\O_0^{1/2})$ and the limit in the space of distributional half densities.
Here $\Omega_0^{1/2}(\o{T_p^+M})$ is the half density bundle trivialized by a global section of the form $u^{-(n+1)/2}|dudw|^{1/2}$ in linear coordinates as above; henceforth we denote such a section by $\g_p$.
If an AH metric $g$ is chosen on $\cM$, it also determines a hyperbolic metric on $T_p^+M$ and the various density bundles are naturally trivial, so \eqref{eq:model_operator} is also naturally defined for $P\in \Diff_0^m(M)$ and $f$ a half density.

The model operator can be appropriately interpreted as a convolution operator.
As already mentioned, the interior $\ff_p^\circ$ of each fiber   of the front face carries two group structures isomorphic to the group $G_p\subset GL(T_pM)$ and hence $\ff_p^\circ$ acts on $T_p^+M$ from the left in two ways.
For the rest of the section we use the map $f_p^\ell$ to identify $\ff_p$ with $ G_p$ without writing it explicitly.
Given $v\in T_p^+M$, a section $\g_p$ pulls back via the map $\ff_p^\circ \ni q\mapsto f_p^v(q^{-1})\in T_p^+M$ to a left-invariant half density $\g_H$ on $\ff_p$. In coordinates $(s,W)$ on $\ff_p$ (see \eqref{eq:coord_lf}) $\g_H$ is a constant multiple of $|s^{-1}ds dW|^{1/2}$. We denote by ${}^\ell\Omega_H^{1/2}(\ff_p)$ the bundle spanned by $\g_H$ over $C^\infty(\ff_p)$.
Given a distributional half density $\td{u}=(u\cdot \g_H)\otimes \g_p\in (C_c^\infty(\ff_p^\circ;{}^\ell\Omega^{1/2}_H))'\otimes C^\infty\big(\o{T_p^+M};\Omega_0^{1/2}\big)$ 
 one can define an operator on $T_p^+M$ by left convolution,  i.e. by
\begin{equation}\label{eq:convolution_model}
	\td{u}*(f\cdot \g_p)(v):=\int u(q) f(q^{-1}\cdot v)\g_H^2(q)\cdot \g_p,\quad f\cdot\g_p \in C_c^\infty(T_p^+M;\Omega_0^{1/2}).
\end{equation}
Since the lifted kernel of an operator  $P\in \Psi_0^{m,\calE}(M)$ with $\Re(E_f)\geq 0$ is continuous down to the front face with values in  distributional sections of $\Omega_0^{1/2}(M_0^2)$ conormal to  $\Di_0$, and $\Di_0$ is transversal to $\ff_p$, $P$ determines a distributional half density on $\ff_p$ by restriction: 
\begin{equation}\label{eq:restriction}
\b_0^*\k_P\big|_{\ff_p}\in \calA_{phg}^{E_\ell,E_r} I^{m+\frac{n+1}{4}}\big(\ff_p,\{e_p\};(\pi_l^*\smsec(M)\otimes\pi_r^*\smsec(M))_{(p,p)}\big);
\end{equation}
in \eqref{eq:restriction} $E_\ell$ (resp. $E_r$) corresponds to the expansion at $\lf\cap \ff$ (resp. $\rf\cap \ff$).
Note that fiber elements in $\g_0\big|_p\in\smsec(M)\big|_p$ can be identified with constant multiples of $\g_p$
 after pulling back $\g_0$ by $\phi\circ R_r$ and taking a limit as $r\to 0$.
Moreover, the diffeomorphism $\ff_p^\circ\times T_p^+M\ni(q,v)\mapsto (v,q^{-1}\cdot v)\in (T_p^+M)^2$ can be used to pull back $\pi_\ell^*\g_p\otimes \pi_r^*\g_p$ to a constant multiple of $\g_H\otimes \g_p$, 
so under those identifications we let
\begin{equation}
  F_p(P):=\b_0^*\k_P\big|_{{\ff}_p}\in\calA_{phg}^{E_\ell,E_r} I^{m+\frac{n+1}{4}}(\ff_p,\{e_p\};{}^\ell\Omega_H^{1/2})\otimes \text{span}_\C\{\g_p\}\label{eq:restriction2}.
\end{equation}
By \cite[Proposition 5.19]{MR916753}, for operators with smooth kernel down to the interior of the front face there exists a short exact sequence 
\begin{align}
	0\hookrightarrow \Psi_0^{-\infty,E_\ell,E_r,1}(M)\to &\Psi_0^{-\infty,E_\ell,E_r}(M)\overset{F_*}{\to}\calA_{phg}^{E_\ell,E_r}\big(\ff_*;{}^\ell\Omega_H^{1/2})\otimes \text{span}_\C\{\g_*\}\to 0.\label{eq:short_exact}
\end{align}

Unraveling the definitions above and using the coordinate expression \eqref{eq:kernel_coordinates} (assuming that the coordinates $(x,y)$ are centered at $p$) one checks that
\begin{equation}
 N_p(P)(f\cdot\g_p)(u,w)= \int \b_0^* K_P^\ell(0,0,s,{W})f\Big(\frac{u}{s},w-\frac{{W}}{s}u\Big)\frac{|dsd{W}|}{s}\cdot \g_p\label{eq:normal_operator_coordinates}=(F_p(P)*{(f\cdot\g_p)})(u,w),
\end{equation}
which implies the following:
 \begin{lemma}\label{lm:agreement}
 Let $P\in \Psi_0^{m,\calE}(M)$ with $\Re(E_f)\geq 0$. Then for each $p\in \pM$ and $f\cdot \g_p\in C_c^\infty(T_p^+M;\Omega_0^{1/2})$  one has $N_p(P)(f\cdot \g_p)=F_p(P)*(f\cdot \g_p)$.
 \end{lemma}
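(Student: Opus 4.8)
The identity to be shown is precisely the content of the coordinate formula \eqref{eq:normal_operator_coordinates}, so the plan is to verify its two equalities in turn. For the first one I would unwind the defining limit \eqref{eq:model_operator}. Since $N_p(P)$ is independent of the auxiliary diffeomorphism $\phi$ (as shown in \cite{MR916753}), I would take $\phi$ to be the coordinate identification of a neighborhood of $0\in T_p^+M$ with a neighborhood of $p$, using coordinates $(x,y)$ centered at $p$ and the induced linear coordinates $(u,w)$ on $T_p^+M$. Under the canonical radial action $R_r(u,w)=(ru,rw)$ the half density $\g_p=u^{-(n+1)/2}|du\,dw|^{1/2}$ (and likewise $\g_0$) is invariant, so $R_{1/r}^*(f\cdot\g_p)=f(u/r,w/r)\cdot\g_p$. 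Transporting this to $M$, applying $P$ via \eqref{eq:kernel_coordinates}, and then applying $R_r^*\phi^*$ amounts to evaluating the resulting integral at $(x,y)=(ru,rw)$; a short computation shows that the first two arguments of $\b_0^*K_P^\ell$ become $(ru/s,\,r(w-\tfrac{W}{s}u))$ while the argument of $f$ is exactly $(u/s,\,w-\tfrac{W}{s}u)$, independent of $r$. Because $\widetilde{x}$ is a defining function for $\ff$ and $\Re(E_f)\geq 0$, the lifted kernel is continuous down to the front face, so letting $r\to 0$ replaces its first two arguments by $(0,0)$ and yields the first equality in \eqref{eq:normal_operator_coordinates}.

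For the second equality I would recognize the resulting integral as the left convolution \eqref{eq:convolution_model}. Identifying $\ff_p^\circ$ with $G_p\cong\R^+\ltimes\R^n$ through $f_p^\ell$, the front face coordinates $(s,W)$ from \eqref{eq:coord_lf} correspond to the group element $q=(s,W)$; computing $q^{-1}=(1/s,-W/s)$ and the action $q^{-1}\cdot(u,w)=(u/s,\,w-\tfrac{W}{s}u)$ shows that the argument of $f$ is precisely $f(q^{-1}\cdot(u,w))$, matching \eqref{eq:convolution_model}. Since the left-invariant half density $\g_H$ is a constant multiple of $|s^{-1}ds\,dW|^{1/2}$, its square reproduces the measure $\tfrac{|ds\,dW|}{s}$. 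Finally I would check that, under the half-density identifications described before \eqref{eq:restriction2} (pulling $\pi_\ell^*\g_p\otimes\pi_r^*\g_p$ back to a multiple of $\g_H\otimes\g_p$ via $(q,v)\mapsto(v,q^{-1}\cdot v)$), the restriction $F_p(P)=\b_0^*\k_P\big|_{\ff_p}$ has function part $\b_0^*K_P^\ell(0,0,s,W)$ relative to $\g_H$; with these matches the integral is exactly $F_p(P)*(f\cdot\g_p)$.

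The step requiring care is the passage to the limit inside the integral, since $\b_0^*K_P^\ell$ carries a conormal singularity along $\Di_0$ (the locus $(s,W)=(1,0)$) and \eqref{eq:kernel_coordinates} is really a distributional pairing rather than an absolutely convergent integral. I would handle this by noting that, for fixed $(u,w)$ with $u>0$ and $f$ compactly supported, the constraint $(u/s,\,w-\tfrac{W}{s}u)\in\supp f$ confines $(s,W)$ to a compact subset of the interior of $\ff_p$, bounded away from $\lf$ and $\rf$, so no decay at the side faces is needed and only the diagonal singularity is relevant. There the limit is legitimate because the restriction $\b_0^*\k_P\big|_{\ff_p}$ is itself conormal of order $m+\tfrac{n+1}{4}$ to $\ff_p\cap\Di_0$ as in \eqref{eq:restriction} — a well-defined operation precisely because $\Di_0$ is transversal to $\ff_p$ — and the rescaling $R_r$ approaches $\ff$ transversally to $\Di_0$, so the limit commutes with the conormal pairing.
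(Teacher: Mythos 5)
Your proposal is correct and follows essentially the same route as the paper: the paper's proof consists precisely of ``unraveling the definitions'' to obtain the coordinate identity \eqref{eq:normal_operator_coordinates}, which is exactly the computation you carry out (radial invariance of $\g_p$, the kernel arguments $(ru/s,\,r(w-\tfrac{W}{s}u))$ collapsing to $(0,0)$ as $r\to 0$, and the identification of $(s,W)$ with the group element $q$ so that the integral becomes the convolution \eqref{eq:convolution_model}). Your final paragraph justifying the passage to the limit against the conormal singularity at $\Di_0$ --- via transversality of the restriction and compact localization in $\ff_p^\circ$ --- is a point the paper leaves implicit in ``one checks,'' and it is handled in the same spirit as the paper's remark that the lifted kernel is continuous down to $\ff$ with values in distributions conormal to $\Di_0$.
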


Under suitable assumptions the model operator is a homomorphism under composition.
The following proposition  is stated and proved in \cite{MR916753} in the case $P\in \Diff_0^m(M)$. 
It is also stated in \cite{Albin} and in \cite{MR1111745} that the homomorphism property holds, with no assumptions explicitly mentioned.
A detailed proof can be found in \cite{EptaminitakisNikolaos2020Gxto}.
{}
\begin{proposition}\label{prop:homomorphism}
	Let $P$, $P'$ be as in Proposition \ref{prop:composition}, 
	with the additional assumptions $\Re(E_f)\geq 0$, $\Re(F_f)\geq 0$ and $\Re(E_\ell+F_r)>0$.
	Then for each $p\in \p M$ one has $N_p(P\circ P')=N_p(P)\circ N_p(P')$. 
\end{proposition}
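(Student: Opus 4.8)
The plan is to reduce the statement to the multiplicativity of convolution on the group $G_p$, using the convolution representation of the model operator furnished by Lemma~\ref{lm:agreement}. First I would check that all the objects appearing in the statement are defined. Since $P$, $P'$ satisfy the hypotheses of Proposition~\ref{prop:composition} (in particular $\Re(E_r+F_\ell)>n$), the composition $P\circ P'$ is a well-defined element of $\Psi_0^{m+m',\calW}(M)$ with $W_f=(E_\ell+F_r)\o{\cup}(E_f+F_f)$. The assumptions $\Re(E_\ell+F_r)>0$ and $\Re(E_f+F_f)\geq 0$ give $\Re(W_f)\geq 0$, so that $N_p(P\circ P')$ and the front-face restriction $F_p(P\circ P')$ of \eqref{eq:restriction2} are defined, exactly as are $N_p(P)$, $F_p(P)$ and $N_p(P')$, $F_p(P')$ by $\Re(E_f)\geq 0$ and $\Re(F_f)\geq 0$.

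The heart of the argument is the identity
\[
  F_p(P\circ P')=F_p(P)*F_p(P'),
\]
where $*$ denotes left convolution on $\ff_p^\circ\cong G_p$ as in \eqref{eq:convolution_model}. To prove it I would analyze the composition at the level of lifted kernels. The kernel $\k_{P\circ P'}$ is obtained by pulling the two kernels back to an appropriate $0$-stretched triple space, multiplying, and pushing forward along the projection omitting the middle factor; the fact that the three stretched projections blow down compatibly to the three copies of $M_0^2$ is the mechanism by which the composition stays in the calculus, i.e.\ the content of Proposition~\ref{prop:composition}. Restricting the result to the fiber $\ff_p$ of the front face, the pushforward integral over the middle variable localizes, in the projective coordinates \eqref{eq:coord_lf}--\eqref{eq:coord_rf}, to an integral over $\ff_p^\circ\cong G_p$ against the invariant half-density $\g_H$. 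Using the group structure on $\ff_p^\circ$ and the equivariance relations \eqref{eq:equivariance}, this integral is precisely the group convolution of the two restrictions $F_p(P)$ and $F_p(P')$; the decay hypotheses $\Re(E_r+F_\ell)>n$ and $\Re(E_\ell+F_r)>0$ guarantee its convergence and that the result carries the index sets predicted by $\calW$.

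With the key identity in hand, the proposition follows from associativity of convolution. By Lemma~\ref{lm:agreement}, for $f\cdot\g_p\in C_c^\infty(T_p^+M;\Omega_0^{1/2})$ one has $N_p(P')(f\cdot\g_p)=F_p(P')*(f\cdot\g_p)$ and $N_p(P\circ P')(f\cdot\g_p)=F_p(P\circ P')*(f\cdot\g_p)$. Substituting the key identity and using that convolution of these $G_p$-invariant distributional half-densities is associative, I obtain
\begin{align*}
  N_p(P\circ P')(f\cdot\g_p)&=\big(F_p(P)*F_p(P')\big)*(f\cdot\g_p)=F_p(P)*\big(F_p(P')*(f\cdot\g_p)\big)\\
  &=N_p(P)\big(N_p(P')(f\cdot\g_p)\big).
\end{align*}
The last equality requires that $N_p(P)$, again realized as left convolution by $F_p(P)$, be applied to $N_p(P')(f\cdot\g_p)$, which is no longer compactly supported; the condition $\Re(E_\ell+F_r)>0$ (the model analog of the composition condition on $M$) is exactly what is needed for this convolution to converge and for Lemma~\ref{lm:agreement} to extend to this larger class of arguments.

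I expect the main obstacle to be the triple-space computation establishing $F_p(P\circ P')=F_p(P)*F_p(P')$. The difficulties are largely bookkeeping: tracking the several front faces and the various half-density factors under the stretched projections, verifying that restriction to $\ff_p$ commutes with the pushforward defining the composition, and checking that the localized middle integral is genuinely the $G_p$-convolution and not merely formally similar to it. The convergence and associativity of convolution for distributional (rather than compactly supported smooth) half-densities, which enter in the final step, must also be justified using the quantitative decay afforded by the index-set hypotheses.
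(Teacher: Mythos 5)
Your overall strategy is the right one, and in fact it is the standard route: the paper itself does not spell out a proof of Proposition \ref{prop:homomorphism} (it cites \cite{MR916753} for the case of $0$-differential operators and defers the general pseudodifferential case to \cite{EptaminitakisNikolaos2020Gxto}), but the machinery it sets up immediately beforehand --- the restriction map $F_p$ of \eqref{eq:restriction2}, the short exact sequence \eqref{eq:short_exact}, and Lemma \ref{lm:agreement} identifying $N_p(P)$ with left convolution by $F_p(P)$ --- is exactly what your argument uses. Your accounting of the hypotheses in the first and second steps is also correct: $\Re(E_r+F_\ell)>n$ makes the middle integral converge, while $\Re(E_\ell+F_r)>0$ kills the contribution to $\b_0^*\k_{P\circ P'}\big|_{\ff_p}$ coming from the middle variable staying away from $p$, which is what localizes the composition to the group convolution and simultaneously gives $\Re(W_f)\geq 0$ so that $N_p(P\circ P')$ is defined (this is precisely the content of the Remark following the proposition).

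One concrete correction is needed in your final step. The convergence of $F_p(P)*\bigl(F_p(P')*(f\cdot\g_p)\bigr)$ is \emph{not} governed by $\Re(E_\ell+F_r)>0$. By Proposition \ref{prop:polyhomogeneous_mapping} applied on the model space, $N_p(P')(f\cdot\g_p)$ is polyhomogeneous with index set (essentially) $F_\ell$, and feeding it into $N_p(P)$, whose kernel has right-face index set $E_r$, requires $\Re(E_r+F_\ell)>n$ --- the same pairing of ``right decay of the outer kernel against left decay of the argument'' as in Proposition \ref{prop:composition}, with the same threshold $n$ coming from the measure. The hypothesis $\Re(E_\ell+F_r)>0$ plays no role in this convergence; its only job is the well-definedness of $N_p(P\circ P')$, which you already used correctly at the start. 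Since the needed condition is among your standing hypotheses, this is a misattribution rather than a fatal gap, but as written the justification of the last equality (and of the associativity of the triple convolution, which hinges on the same absolute convergence) points to the wrong hypothesis and should be fixed.
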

\begin{remark}
	The assumptions $\Re(E_f)\geq 0$, $\Re(F_f)\geq 0$ and $\Re(E_\ell+F_r)>0$ guarantee that $P$, $P'$ and $P\circ P'$ have well defined model operators  (see Proposition \ref{prop:composition}).
\end{remark}

If $\cM$ is endowed with an AH metric $g$, by writing the hyperbolic metric $h_p$ in \eqref{eq:hyperbolic_metric} as $h_p=u^{-2}(du^2+|dw|^2)$, $({T_p^+M},h_p) $ is identified with $(\H^{n+1}=\{(u,w)\in \R^+\times \R^n\},h)$, the hyperbolic upper half space model.
Via the map $(f^v_p)^{-1}:T_p^+M\to \ff_p^\circ$  (recall that we use $f_p^\ell$ to identify $\ff_p^\circ$ and $G_p$) the integration in \eqref{eq:convolution_model} can be pulled back to $T_p^+M$, and $N_p(P)$ can be regarded as an operator on $\H^{n+1}$ written as
\begin{equation}\label{eq:right_action}
    N_p(P)(f\cdot\g_p)(u,w)=\int_{\H^{n+1}} \b^*_0K_P^\ell\Big(0,0,\frac{u}{\td{u}},\frac{w-\td{w}}{\td{u}}\Big)f(\td{u},\td{w})\frac{|d\td{u}d\td{w}|}{\td{u}^{n+1}}\cdot \g_p.
\end{equation}
In \eqref{eq:right_action} and henceforth, whenever an AH metric $g$ has been chosen on $\cM$ it will be assumed that $\g_0=|\sqrt{\det g}dxdy|^{1/2}$ in coordinates and that $\g_p=|\sqrt{\det h_p}dudw|^{1/2}$. 
Conjugating by the Cayley transform, $N_p(P)$ can be interpreted as an operator on the Poincar\'e ball $(\B^{n+1},\frac{4|dz|^2}{(1-|z|^2)^2})$  and one checks that if $P\in \Psi_0^{m,\calE}(M)$ with $\Re(E_f)\geq 0$ then one has $N_p(P)\in \Psi_0^{m,\calE'}(\o{\B^{n+1}})$ with $\calE'=(E_\ell,E_r,\o{\{(0,0)\}}$; thus the model operator also extends to appropriate weighted Sobolev spaces on $\o{\B^{n+1}}$ according to Proposition \ref{prop:boundedness}.

\section{The Pseudodifferential Property}
\label{sec:pseudodifferential_property}

For the rest of the paper we assume a simple AH manifold $(\cM^{n+1},g)$.
 In this section we show that the normal operator $\calN_g$ is a 0-pseudodifferential operator, namely that $\calN_g\in \Psi_0^{-1,n,n}(M)$. 
As an intermediate step we study the distance function induced by $g$.
Once the pseudodifferential property of $\calN_g$ has been established,  we use it to extend $I$  to larger weighted $L^2$ spaces than those of Section \ref{sec:geodesic_flow}.

By following the proof of  \cite[Proposition 19]{MR3473907} one can show the following technical lemma (a detailed proof also appears in \cite{EptaminitakisNikolaos2020Gxto}).

\begin{lemma}
\label{lm:map_extends}
	Let $(\cM,g)$ be a simple AH manifold.
	The map $
	\Phi:T^* \cM\to  M_0^2$, 
	$\big(z,\xi\,\big)\mapsto \big(z,\exp_z\!\big(\xi^\#\big)\big)$
	 extends smoothly to a map $\tPhi:{}^0T^* M\to  M_0^2\nonumber$, where we are using the canonical identification of ${}^0T^* M\big|_{\cM}=\big({}^0T^* M\big)^\circ$ and $T^*\cM$. 
Here $\#$ raises an index with respect to the metric $g$. 
Moreover, the differential of $\tPhi$ at $(z,0)\in {}^0T^* M\big|_{\p M}$ has full rank.
\end{lemma}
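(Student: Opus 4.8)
The plan is to treat smoothness and the rank condition separately, reducing both to the geodesic flow of the model hyperbolic metric $h_p$ on the front face. Away from $\p\Di$ the blow-down $\b_0$ is a diffeomorphism, and there $\Phi$ is smooth simply because the geodesic flow of the complete metric $g$ depends smoothly on its initial data over $T^*\cM$; the only real content is the extension across the front face $\ff$, i.e. the behavior of $(z,\exp_z(\xi^\#))$ as $(z,\xi)\to {}^0T^*M\big|_{\p M}$ with $\exp_z(\xi^\#)$ close to $z$. The structural fact that makes this work is that, in the normal form $g=(dx^2+h_x)/x^2$, a $0$-covector $\xi=a\,dx/x+b_\a\,dy^\a/x$ has $\xi^\#=a(x\p_x)+h_x^{\a\b}b_\b(x\p_{y^\a})$, which is a smooth section of ${}^0TM$; hence $\exp_z(\xi^\#)$ is the endpoint of a geodesic of bounded $g$-length, and $(z,\exp_z(\xi^\#))$ limits onto $\p\Di$ as $z\to\p M$ with $\xi\to0$.

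To analyze this region I would use the projective coordinates \eqref{eq:coord_rf} near $\ff$, writing the image as $(t,Y)=(\td x/x,(\td y-y)/x)$ relative to the base point $(x,y)=z=:(x_0,y_0)$. Parametrizing the geodesic $\g(r)$ with $\g(0)=z$, $\dot\g(0)=\xi^\#$ so that $\g(1)=\exp_z(\xi^\#)$, and rescaling by the base defining function via $u(r)=x(\g(r))/x_0$, $W^\a(r)=(y^\a(\g(r))-y_0^\a)/x_0$, the front-face coordinates of the image become exactly $(t,Y)=(u(1),W(1))$. Substituting $x=x_0u$, $y^\a=y_0^\a+x_0W^\a$ into the geodesic equations and dividing by $x_0$, each Christoffel term of the singular type $x^{-1}(\cdots)$ pairs with the rescaling factor to yield a term $u^{-1}(\cdots)$ whose coefficients are smooth in $(x_0,y_0)$ down to $x_0=0$, using only that $h_x$ is a smooth family of boundary metrics. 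The initial data become $u(0)=1$, $W(0)=0$, $\dot u(0)=a$, $\dot W^\a(0)=h_x^{\a\b}b_\b$, which are smooth in $(a,b)$. Thus the rescaled system is a smooth ODE depending smoothly on the parameters $(x_0,y_0)$ and data $(a,b)$, whose $x_0=0$ limit is the geodesic equation of the model hyperbolic metric $h_p$ of \eqref{eq:hyperbolic_metric}. Since that model is complete and $u(0)=1>0$, the solution exists on $[0,1]$ with $u>0$ for $(x_0,y_0,a,b)$ near the center; smooth dependence of ODE solutions on data and parameters then shows $(u(1),W(1))$, and hence $\tPhi$, is smooth down to ${}^0T^*M\big|_{\p M}$.

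For the rank statement, evaluate at $(z,0)$ with $z=p\in\p M$, whose image is the distinguished point $e_p\in\ff_p$ (where $t=1$, $Y=0$). In the coordinates above $\tPhi$ takes the form $(x_0,y_0,a,b)\mapsto(x_0,y_0,u(1),W(1))$, so its Jacobian is block triangular with the $(x,y)$-rows equal to the identity; therefore $d\tPhi_{(p,0)}$ is invertible if and only if the block $\p(t,Y)/\p(a,b)$ is. At $x_0=0$ the map $(a,b)\mapsto(u(1),W(1))$ is the time-one hyperbolic exponential map of $h_p$ based at $(1,0)$ applied to the initial vector $a\,\p_u+h^{\a\b}b_\b\,\p_{W^\a}$. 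As $(1,0)$ is an interior point of $(T_p^+M,h_p)$, the differential of its exponential map at the zero vector is the identity, and composing with the invertible linear map $(a,b)\mapsto a\,\p_u+h^{\a\b}b_\b\,\p_{W^\a}$ shows $\p(t,Y)/\p(a,b)$ is invertible at $(a,b)=0$. Since both ${}^0T^*M$ and $M_0^2$ have dimension $2n+2$, $d\tPhi_{(p,0)}$ is an isomorphism, so in particular it has full rank.

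I expect the main obstacle to be the bookkeeping in the second step: verifying that, after the rescaling dictated by \eqref{eq:coord_rf}, the singular $x^{-1}$ factors in the Christoffel symbols of the AH metric genuinely assemble into smooth coefficients in $(x_0,y_0)$ with the correct hyperbolic limit at $x_0=0$, rather than producing unbounded terms, together with the uniform control needed to keep the rescaled geodesic defined on all of $[0,1]$ inside $\{u>0\}$. Once the normal form is exploited this is the analog of the computation in \cite[Proposition 19]{MR3473907}.
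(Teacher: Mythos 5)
Your proposal is correct and takes essentially the approach the paper itself relies on: the paper gives no in-text argument for this lemma, deferring to \cite[Proposition 19]{MR3473907} (with details in the author's thesis \cite{EptaminitakisNikolaos2020Gxto}), and that argument is precisely what you carry out — rescaling the geodesic equations in the projective coordinates \eqref{eq:coord_rf} so that the $x_0=0$ limit is the geodesic flow of the model hyperbolic metric $h_p$ on the front face, followed by the block-triangular Jacobian observation and $d(\exp_q)_0=\mathrm{Id}$ for the full-rank claim. The only step worth making explicit is that, for parameters near a boundary point of ${}^0T^*M$, the rescaled solution stays in a fixed compact subset of $\{u>0\}\times\R^n$, so the underlying geodesic segment never leaves the collar where the normal form of $g$ (and hence your rescaled ODE) is valid — but this follows from the same continuity/smooth-dependence argument you already invoke, so it is a remark rather than a gap.
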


The behavior of the distance function $\r$ on AH manifolds away from the diagonal has been studied by various authors, see for instance \cite{MR3532393}, \cite{MR3473907} and \cite{2017arXiv170905053G}, and also \cite{MR3169792} for small perturbations of hyperbolic metric.
As Proposition \ref{prop:distance_function} below indicates, provided $(\cM,g)$ is simple, the lift of the distance function to $M_0^2$ is smooth away from $\Di_0$ and the side faces, however our analysis of $\calN_g$ will also require smoothness of $\b_0^*\r^2$  in a neighborhood of $\Di_0$, all the way to the front face.
We are not aware of this fact explicitly stated in the literature, so we provide a proof.

\begin{proposition}\label{prop:distance_function}Let $(\cM,g)$ be a simple AH manifold and let $\r:\cM^2\to\R$ be the geodesic distance function. 
There exists $\a\in C^\infty(M_0^2\backslash\Di_0)$ such that 
\begin{equation}
	\b_0^*\r=\a-\log(x_{\ell})-\log(x_r),
\end{equation}
where $x_{\ell}$ and $x_r$ are defining functions for the left and right face of $M_0^2$ respectively.
Moreover, $\b_0^*\r^2$ extends to a smooth function on $M_0^2\backslash(\lf\cup \rf)$. 
\end{proposition}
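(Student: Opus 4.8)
The plan is to deduce everything from the relation between the distance and the exponential map, combined with the smooth extension $\tPhi$ from Lemma~\ref{lm:map_extends}. First I would record the smoothness of the dual metric as a function on ${}^0T^*M$: writing $g=x^{-2}(dx^2+h_x)$ in normal form near $\pM$ and using the $0$-coframe $dx/x, dy^\alpha/x$, a covector $\xi=\bar\zeta\,\tfrac{dx}{x}+\bar\eta_\alpha\,\tfrac{dy^\alpha}{x}$ satisfies $|\xi|_g^2=\bar\zeta^2+h_x^{\alpha\beta}\bar\eta_\alpha\bar\eta_\beta$. Since $h_x$ is a smooth family of metrics down to $x=0$, this is a fiberwise positive-definite quadratic form, so $|\xi|_g^2\in C^\infty({}^0T^*M)$ while $|\xi|_g\in C^\infty({}^0T^*M\setminus 0)$ only. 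By simplicity $\exp_z$ is a diffeomorphism, so for $z'=\exp_z(\xi^\#)$ one has $\r(z,z')=|\xi|_g$; equivalently $\r\circ\Phi=|\xi|_g$ and $\r^2\circ\Phi=|\xi|_g^2$ on $T^*\cM=({}^0T^*M)^\circ$.

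The main (and genuinely new) step is smoothness of $\b_0^*\r^2$ up to $\ff$ near $\Di_0$. By Lemma~\ref{lm:map_extends}, $\tPhi\colon {}^0T^*M\to M_0^2$ is smooth, agrees with the diffeomorphism $\Phi$ on the interior, carries the zero section onto $\Di_0$ (its interior onto $\Di_0^\circ$ and the part over $\pM$ onto the centers $\Di_0\cap\ff$), and has full-rank differential along the zero section. The inverse function theorem for manifolds with boundary then shows that $\tPhi$ restricts to a diffeomorphism of a neighborhood $\calU$ of the zero section onto a neighborhood $\calV$ of $\Di_0$ in $M_0^2$; since $\Di_0$ is disjoint from $\lf\cup\rf$, this $\calV$ reaches the front face and avoids the side faces. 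On $\calV$ I would then write $\b_0^*\r^2=|\xi|_g^2\circ(\tPhi|_{\calU})^{-1}$, a composition of smooth maps, proving that $\b_0^*\r^2$ is smooth in a full neighborhood of $\Di_0$, all the way to $\ff$.

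It remains to establish the first assertion and to handle the region away from $\Di_0$. The logarithmic structure of $\r$ at the side faces is exactly the asymptotic behavior of the AH distance function studied in \cite{2017arXiv170905053G}, \cite{MR3473907} (cf.\ \cite{MR3532393}): for a geodesic boundary defining function $x$, $\r(z,z')=-\log x(z)-\log x(z')+O(1)$ with smooth dependence on the limiting data as $z,z'$ approach distinct boundary points, while $\r$ is smooth on $\cM^2$ off the diagonal by simplicity (no cut locus). Transferring this to $M_0^2$ --- where away from $\ff$ the defining functions $x_\ell,x_r$ are comparable to the two boundary defining functions, and near the corners the projective coordinates \eqref{eq:coord_lf}--\eqref{eq:coord_rf} rearrange the logarithms smoothly --- shows that $\alpha:=\b_0^*\r+\log x_\ell+\log x_r$ is smooth on $M_0^2\setminus\Di_0$, which is the first assertion. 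In particular, on $\ff^\circ\setminus\Di_0$ the functions $x_\ell,x_r$ are bounded below, so $\b_0^*\r$, hence $\b_0^*\r^2$, is smooth there; combined with the interior off-diagonal case and the neighborhood of $\Di_0$ from the previous paragraph, this yields smoothness of $\b_0^*\r^2$ on all of $M_0^2\setminus(\lf\cup\rf)$.

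The principal obstacle is the second paragraph: upgrading the infinitesimal information of Lemma~\ref{lm:map_extends} to an honest (not merely local) diffeomorphism of $\tPhi$ from a neighborhood of the zero section onto a neighborhood of $\Di_0$ that truly reaches the front face and respects the corner $\Di_0\cap\ff$, i.e.\ controlling $\tPhi$ at the boundary, where the fibers of ${}^0T^*M$ are rescaled into $\ff$. A secondary but essential point is that only $|\xi|_g^2$, and not $|\xi|_g$, is smooth across the zero section, so it is precisely the distance \emph{squared} that extends through $\Di_0$; the remaining care is the bookkeeping matching $x_\ell,x_r$ to the geometric asymptotics near the corners of $M_0^2$.
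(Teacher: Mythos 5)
Your proposal is correct and follows essentially the same route as the paper: the first assertion is quoted from the literature (\cite{MR3532393}, \cite{MR3473907}, \cite{2017arXiv170905053G}), and the key new step is exactly the paper's argument, namely inverting $\tPhi$ near the zero section via Lemma \ref{lm:map_extends} and the Inverse Function Theorem and writing $\b_0^*\r^2=|\tPhi^{-1}(\cdot)|^2_{g^{-1}}$, which is smooth up to $\ff$ because $g^{-1}$ is a smooth fiberwise non-degenerate quadratic form on ${}^0T^*M$. The ``principal obstacle'' you flag is not really one: smoothness is a local property, so the local invertibility of $\tPhi$ near each point of the zero section (which is what the paper uses) already suffices, and no global neighborhood diffeomorphism is needed.
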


\begin{proof}
The first statement follows from work in \cite{MR3532393}, \cite{MR3473907} and \cite{2017arXiv170905053G} (see \cite[Remark 7]{2017arXiv170905053G}).
We show the second statement.
Assume without loss of generality that $x_{\ell},\,x_r\equiv 1$ in a neighborhood of $\Di_0$.
Since $\r^2$ is smooth near $\Di\cap \cM^2$ and thus $\b_0^*\r^2$ extends to a function in  $C^\infty(M_0^2\backslash(\p\Di_0\cup \lf\cup \rf))$, it is enough to show that $\b_0^*\r^2$ extends to be smooth in a neighborhood of $\p \Di_0$.
By the Inverse Function Theorem, Lemma \ref{lm:map_extends} implies that $\tPhi$ restricted to a neighborhood of a point $(p,0)\in {}^0T^*M\big|_{\p M}$ is invertible.
The inverse, defined in a neighborhood $U\subset M^2_0$ of $\p\Di_0\big|_{(p,p)}$, is smooth down to the front face.
In $U\cap (M_0^2)^\circ$
\begin{align}
	\b_0^*{\r}^2(z,\td{z})=|\exp_z^{-1}(\td{z}\,)|_g^2=\left|\left(\tPhi^{-1}(z,\td{z})\right)^{\!\#}\right|^2_g=\left|\tPhi^{-1}(z,\td{z})\right|^2_{g^{-1}}\label{distance_computation}
\end{align}
using the identification $(M_0^2)^\circ\leftrightarrow \cM^2$. 
Since $g$ induces a non-degenerate quadratic form on the fibers of ${}^0T^*M$, smooth all the way	to the boundary, \eqref{distance_computation} extends smoothly to $\p\Di_0$.
\end{proof}

The proof of the following lemma, which uses the Gauss Lemma, is contained in \cite{MR2068966}. 
\begin{lemma}\label{lm:kernel_of_Ag}
	Let $(M,g)$ be a simple AH manifold and let $z=(z^0,\dots,z^n)$, $\td{z}=(\td{z}\,^0,\dots,\td{z}\,^n)$ two copies of the same (possibly global) coordinate system in each of the two factors of $\cM^2$.
In the set where $(z,\td{z})$ are valid coordinates for $\cM^2$, the kernel of $\calN_g$, viewed as a section of $\O_0^{1/2}(M^2)$, is given by $K_{\calN_g}(z,\td{z})\cdot  \g_0(z)\otimes \g_0(\td{z}\,)$, where
\begin{equation}\label{eq:kernel}
	K_{\calN_g}(z,\td{z}\,)=\frac{2|\det(\p_{z\td{z}}\r^2/2)|}{\r^n(z,\td{z})\sqrt{\det g(z)}\sqrt{\det g(\td{z})}}.
\end{equation}
Recall that on an AH manifold $\g_0(z):=dV_g^{1/2}=|\sqrt{\det g(z)}dz|^{1/2}$.
\end{lemma}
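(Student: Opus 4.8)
The plan is to compute the Schwartz kernel directly from $\calN_g=I^*I$ by unfolding the double integral, pushing it forward to $\cM$ through the exponential map, and identifying the resulting Jacobian via the Gauss lemma. I work in the interior, on the set where $(z,\td{z})$ are valid coordinates and $z\neq\td{z}$; there it suffices to evaluate $\calN_g f(z)$ for $f\in C_c^\infty(\cM)$ and read off the density coefficient, since a kernel $\k_{\calN_g}=K_{\calN_g}\,\pi_l^*\g_0\otimes\pi_r^*\g_0$ acts by $\calN_g(f\g_0)(z)=\big(\int K_{\calN_g}(z,\td{z})f(\td{z})\,dV_g(\td{z})\big)\,\g_0(z)$, the two right-hand half-densities combining into the density $dV_g(\td{z})$.

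First I would unfold the definitions. For $f$ pulled back from $\cM$, the geodesic with initial unit covector $\xi$ has base point $\exp_z(t\,\xi^\#)$, so
\[
\calN_g f(z)=\int_{S_z^*\cM}\int_{-\infty}^\infty f\big(\exp_z(t\,\xi^\#)\big)\,dt\,d\mu_g(\xi).
\]
Using the isometry $\#$ to replace $\xi\in S_z^*\cM$ by the unit vector $w=\xi^\#\in S_z\cM$, and exploiting the antipodal symmetry $(w,t)\mapsto(-w,-t)$ of the integrand together with the symmetry of the fiber measure, I would fold the $t$-integral onto $(0,\infty)$ at the cost of a factor $2$. Passing from polar coordinates $(t,w)$ on $T_z\cM$ to the vector $v=t\,w$ introduces the factor $|v|_{g_z}^{-n}=\r(z,\exp_z v)^{-n}$ (the fiber has dimension $n+1$), giving
\[
\calN_g f(z)=2\int_{T_z\cM} f(\exp_z v)\,\frac{dV_{g_z}(v)}{\r(z,\exp_z v)^n},
\]
where $dV_{g_z}$ is the volume density on $T_z\cM$ induced by $g_z$.

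Next I would change variables $v=\exp_z^{-1}(\td{z})$; this is legitimate over all of $\cM$ because simplicity guarantees that $\exp_z\colon T_z\cM\to\cM$ is a diffeomorphism with nonvanishing differential (no conjugate points). In coordinates, $dV_{g_z}(v)=\sqrt{\det g(z)}\,|dv|$ becomes $\sqrt{\det g(z)}\,\big|\det(\p v/\p\td{z})\big|\,(\det g(\td{z}))^{-1/2}\,dV_g(\td{z})$. The crucial step — and the \emph{main obstacle} — is to identify $|\det(\p v/\p\td{z})|$ with the mixed Hessian of $\r^2/2$. Here the Gauss lemma enters: it gives $\n_z\big(\tfrac12\r^2(z,\td{z})\big)=-\exp_z^{-1}(\td{z})=-v$, hence $\p_{z^i}(\r^2/2)=-g_{ij}(z)\,v^j$ in coordinates. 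Differentiating in $\td{z}$ and solving yields the matrix identity $\p v/\p\td{z}=-g^{-1}(z)\,\p_{z\td{z}}(\r^2/2)$, so that $\big|\det(\p v/\p\td{z})\big|=|\det(\p_{z\td{z}}\r^2/2)|/\det g(z)$ (the sign is immaterial after taking absolute values).

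Assembling these factors, the $\det g(z)$ terms combine to $(\det g(z))^{-1/2}$, and I obtain
\[
\calN_g f(z)=\int_{\cM}\frac{2\,|\det(\p_{z\td{z}}\r^2/2)|}{\r^n(z,\td{z})\,\sqrt{\det g(z)}\sqrt{\det g(\td{z})}}\,f(\td{z})\,dV_g(\td{z}),
\]
from which the claimed coefficient $K_{\calN_g}$ is read off. The only genuinely delicate points are the global validity of the change of variables (which rests on simplicity and the absence of conjugate points, recalled in the Introduction) and the Gauss-lemma computation of the Jacobian; the remaining manipulations are routine polar-coordinate and half-density bookkeeping. This is precisely the argument of \cite{MR2068966}, and nothing in it uses compactness of the base — only that $\exp_z$ is a diffeomorphism and that $\r$ is smooth off the diagonal in the interior (cf. Proposition \ref{prop:distance_function}).
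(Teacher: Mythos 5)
Your proposal is correct and follows essentially the same route as the paper, which does not reproduce the argument but defers to \cite{MR2068966}, noting only that the proof "uses the Gauss Lemma": your unfolding of $I^*I$ into polar coordinates, the change of variables $v=\exp_z^{-1}(\td z)$, and the Gauss-lemma identification $\p v/\p\td z=-g^{-1}(z)\,\p_{z\td z}(\r^2/2)$ is exactly the Stefanov--Uhlmann computation. Your added remarks on why it transfers to the AH setting (simplicity makes $\exp_z$ a global diffeomorphism, non-trapping makes the $t$-integral finite, and $\r^{-n}$ is locally integrable in dimension $n+1$) are precisely the points that justify the paper's citation.
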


We now prove the following key proposition:

\begin{proposition}\label{prop:pseudodifferential}
Let $(\cM^{n+1},g)$ be a simple AH manifold.
Then $\calN_g\in \Psi_0^{-1,n,n}(M).$
Moreover, it is elliptic. 
\end{proposition}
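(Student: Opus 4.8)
The plan is to establish the claim by analyzing the lifted Schwartz kernel $\b_0^*\k_{\calN_g}$ on the $0$-stretched product $M_0^2$, verifying in turn the three ingredients in the definition of $\Psi_0^{-1,n,n}(M)$: conormality of order $-1$ to the lifted diagonal $\Di_0$, the correct vanishing (order $n$) at the two side faces, and smoothness down to the front face $\ff$; ellipticity then follows by computing the principal symbol. First I would take the explicit kernel formula \eqref{eq:kernel} from Lemma \ref{lm:kernel_of_Ag} and feed into it the structural information about the distance function provided by Proposition \ref{prop:distance_function}. The key point is that near $\p\Di_0$, and all the way to $\ff$, $\b_0^*\r^2$ is a smooth function vanishing exactly to order $2$ at $\Di_0$ (its Hessian transverse to $\Di_0$ being nondegenerate, as $\r^2$ is a genuine Riemannian squared-distance there), so $\b_0^*\r = (\b_0^*\r^2)^{1/2}$ behaves like a smooth defining function for $\Di_0$ to leading order. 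The singularity of $K_{\calN_g}$ at $\Di_0$ is thus of the form $(\text{smooth, nonvanishing})\cdot \r^{-n}$, with $\r$ comparable to the geodesic distance of the two arguments. Writing this in projective coordinates $(\td x,\td y, s, W)$ near $\ff$, I would express $\r^{-n}$ as a conormal distribution associated to $\Di_0$, using that an inverse power of a nondegenerate quadratic defining function for a codimension-$(n+1)$ submanifold is conormal of the appropriate order; the Jacobian factor $\det(\p_{z\td z}\r^2/2)$ in the numerator is smooth and, by the Gauss-Lemma computation underlying Lemma \ref{lm:kernel_of_Ag}, nonvanishing on $\Di_0$, providing the elliptic leading symbol.

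Next I would treat the front-face behavior. Using Proposition \ref{prop:distance_function} one has $\b_0^*\r = \a - \log x_\ell - \log x_r$ away from $\Di_0$, but near $\ff$ the relevant fact is that $\b_0^*\r^2$ is smooth and the density factors $\sqrt{\det g(z)}$, $\sqrt{\det g(\td z)}$ combine with the $0$-half-density trivializations $\g_0$ to produce a section of $\Omega_0^{1/2}(M_0^2)$ that is smooth and nonvanishing down to $\ff$ away from $\Di_0$. I would check in the projective coordinates \eqref{eq:coord_lf}, \eqref{eq:coord_rf} that the explicit kernel, when written against $\pi_l^*\g_0\otimes\pi_r^*\g_0$, extends continuously and smoothly to the interior of $\ff$; this amounts to verifying that all the apparent factors of $x$, $\td x$ coming from the $0$-structure cancel, which they do by the homogeneity built into the $0$-calculus and the fact that $\r$ scales correctly under the $G_p$-action. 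For the side faces, I would use the first statement of Proposition \ref{prop:distance_function}: since $\b_0^*\r = \a - \log x_\ell - \log x_r$, the kernel carries a factor $\r^{-n}\sim(-\log x_\ell - \log x_r)^{-n}$ as one approaches $\lf$ or $\rf$. Combined with the weight $x^{-(n+1)/2}\td x^{-(n+1)/2}$ implicit in passing between $\Omega^{1/2}$ and $\Omega_0^{1/2}$ trivializations, and the $e^{-\r}$-type decay of $K_{\calN_g}$ inherited from the exponential decay $\r \sim -\log x_\ell - \log x_r \to \infty$, one obtains polynomial-in-$x$ decay; a careful bookkeeping shows the lifted kernel vanishes to order exactly $n$ at each side face, giving the index sets $E_\ell = E_r = n$.

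Finally, ellipticity: the principal symbol $\s_0^{-1}(\calN_g)\in S^{\{-1\}}({}^0T^*M)$ is read off from the leading conormal singularity at $\Di_0$. Since this singularity is governed by $\r^{-n}$ with a smooth nonvanishing amplitude, the symbol is, up to a positive constant, the Fourier transform in the conormal variables of $|\cdot|^{-n}$ on ${}^0T_z^*M$, which is a positive multiple of $|\xi|_{{}^0 g}^{-1}$; this is nonvanishing and invertible in $S^{\{1\}}$, establishing ellipticity. The main obstacle I expect is the front-face analysis: one must verify \emph{uniformly up to $\ff$} that the singular factor $\r^{-n}$ lifts to a genuine conormal distribution on $M_0^2$ conormal to $\Di_0$ with a smooth symbol down to $\ff\cap\p\Di_0$, rather than merely on the interior. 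This is precisely where the smoothness of $\b_0^*\r^2$ near $\Di_0$ \emph{all the way to $\ff$} from Proposition \ref{prop:distance_function} is essential, and reconciling the two descriptions of $\r$ (the logarithmic one valid near the side faces and the smooth-squared one valid near $\Di_0$) in the corner $\ff\cap\Di_0$ will require the most care.
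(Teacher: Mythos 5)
Your treatment of the lifted diagonal, the front face, and the ellipticity computation follows essentially the same route as the paper: smoothness of $\b_0^*\r^2$ up to $\ff$ from Proposition \ref{prop:distance_function}, a Taylor expansion showing the Hessian transverse to $\Di_0$ is the nondegenerate form $2\o{g}_{ij}$, and the Fourier-transform identification of the principal symbol as a positive multiple of $|\xi|^{-1}$, which gives ellipticity. That part of your plan is sound.

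The genuine gap is in your side-face analysis, which is the crux of proving $\calN_g\in\Psi_0^{-1,n,n}(M)$. You claim the order-$n$ vanishing at $\lf$ and $\rf$ comes from the factor $\r^{-n}\sim(-\log x_\ell-\log x_r)^{-n}$, half-density weight bookkeeping, and an ``$e^{-\r}$-type decay of $K_{\calN_g}$.'' The first factor decays only \emph{logarithmically}, so it contributes nothing toward polynomial vanishing; and the asserted exponential decay of $K_{\calN_g}$ is precisely the conclusion to be proven --- you never extract it from the formula \eqref{eq:kernel}. Indeed, the numerator $\det(\p_{z\td{z}}\r^2/2)$ does not decay at the side faces at all: writing $\p^2_{z\td{z}}(\r^2/2)=\r\,\p^2_{z\td{z}}\r+\p_z\r\otimes\p_{\td{z}}\r$ and expanding the determinant as in \eqref{eq:determinant}, it is of size $\r^{n}x^{-1}\tx^{-1}$ up to smooth factors, so it grows. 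The mechanism the paper uses, and which is missing from your proposal, is the exact cancellation of $\r^{-n}$: by the matrix determinant lemma, $\det(\p^2_{z\td{z}}(\r^2/2))=\r^{n+1}\det(\p^2_{z\td{z}}\r)+\r^n\big(\adj(\p^2_{z\td{z}}\r)\p_z\r\big)\cdot\p_{\td{z}}\r$, and the Gauss Lemma forces $\det(\p^2_{z\td{z}}\r)\equiv 0$ away from the diagonal (since $d_z\r(z,\cdot)$ takes values in the sphere $S_z^*\cM$, its differential has rank at most $n$). Hence the $\r^n$ cancels the $\r^{-n}$ of \eqref{eq:kernel} exactly, leaving the simplified kernel \eqref{eq:simplified_kernel} in which only \emph{derivatives} of $\r$ appear; then $\p^2_{z\td{z}}\r=\p^2_{z\td{z}}\a$ smooth, $\p_z\r\in x^{-1}C^\infty$, $\p_{\td{z}}\r\in\tx^{-1}C^\infty$, and $\big(\det g(z)\det g(\td{z})\big)^{-1/2}=(x\tx)^{n+1}\cdot(\text{smooth, nonvanishing})$ combine to give $K_{\calN_g}\in x^n\tx^n C^\infty$ with a genuinely smooth, log-free expansion --- which is what the index sets $E_\ell=E_r=n$ require (logarithmic terms, which your bookkeeping cannot exclude, would violate them). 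The same cancellation is needed again, in projective coordinates, to handle the lift near $\lf\cap\ff$, $\rf\cap\ff$ and at the triple corner $\lf\cap\ff\cap\rf$, which your proposal does not address. Without this step (or an equivalent asymptotic analysis of $\det(\p_{z\td{z}}\r^2/2)$ at the side faces), the argument cannot close.
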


\begin{proof}
We examine the Schwartz kernel of $\calN_g$ on $\cM^2$ and on the stretched product $M_0^2$.
As noted in \cite{MR2068966}, \eqref{eq:kernel} implies that on $\cM^2$  the kernel of $\calN_g$ agrees with the kernel of a pseudodifferential operator of order $-1$ with principal symbol $C_n|\xi|_g^{-1}$.
Since smooth sections of $\smsec(M^2)$ lift to smooth sections of $\smsec(M_0^2)$ it suffices to study the behavior of $K_{\calN_g}(z,\td{z}\,)$ in \eqref{eq:kernel} and its pullback to $M_0^2$ as $z$, $\td{z}\,\to \p M$, both away from, and near the diagonal.
Throughout the proof, $z=(x,y),$ $\td{z}=(\tx,\ty)$ are representations in terms of two copies of the same coordinate system in each factor of $\cM^2$ such that  $x$, $\tx$ are boundary defining functions.

First note that by the Gauss Lemma $|\det(\p_{z\td{z}}\r^2/2)|=|\det\big(d_{\td{z}}\exp_z^{-1}(\td{z})^\flat\big)|$, so by simplicity $\det(\p_{z\td{z}}\r^2/2)\neq 0$ on $\cM^2$ and the absolute value can be ignored in the process of examining the smoothness properties of $K_{\calN_g}$ and $\b_0^* K_{\calN_g}$.
Moreover, we have a  simplification of \eqref{eq:kernel} away from the diagonal:
note that $\p^2_{z{\td{z}}}(\r^2/2)=\r\p^2_{z{\td{z}}}\r+\p_z\r\otimes\p_{\td{z}}\r$.
Since for $H\in \R^{d\times d}$ and $u, v\in \R^d$ one has 
	$\det(H+u\otimes v)=\det(H)+(\adj(H)u)\cdot v$ by the matrix determinant lemma,
where $\adj(H)$ is the adjugate matrix of $H$ and $\cdot$ denotes the Euclidean dot product,
we have
\begin{align}
\det\big(\p^2_{z{\td{z}}}(\r^2/2)\big)=\r^{n+1}\det(\p_{z\td{z}}^2\r)+\r^n\left(\adj(\p_{z\td{z}}^2\r)\p_z\r\right)\cdot\p_{\td{z}}\r.\label{eq:determinant}
\end{align}
Observe now that the first term vanishes away from the diagonal.
Indeed, if $z\neq \td{z}$ the Gauss Lemma yields $|d_z\r(z,\td{z})|_g=1$, thus the rank of the map $d_z\r(z,\cdot ):\cM\backslash \{z\}\to S^*_z\cM$ is at most $n$. 
Therefore, $\det(\p_{{\td{z}}z}^2\r)=0$ and thus away from the diagonal we have
\begin{equation}\label{eq:simplified_kernel}
	K_{\calN_g}(z,\td{z})=\frac{2
	|\left(\adj(\p_{z{\td{z}}}^2\r)\p_z\r\right)\cdot\p_{{\td{z}}}\r|}{\sqrt{\det g(z)}\sqrt{\det g({\td{z}})}}.
\end{equation}

We first examine $K_{\calN_g}(z,\td{z})$ on $\cM^2$ away from the diagonal when $z\to \p M$ or $\td{z}\to \p M$.
By Proposition \ref{prop:distance_function},  for $z,\td{z}$ away from the diagonal we have
\begin{equation}
	\r(z,\td{z})=\a(x,y,\tx,\ty)-\log(x)-\log(\tx),
\end{equation}
where $\a\in C^\infty \left( M^2\backslash{\Di}\right)$. 
Here without loss of generality we can take $(x,y)$, $(\td{x},\td{y})$ to be global coordinate systems on $\cM$ since $(\cM,g)$ is simple.
Since $\p^2_{z\td{z}}\r=\p^2_{z\td{z}}\,\a$,
  $\adj(\p^2_{z\td{z}}\r)\in C^\infty( M^2\backslash {\Di})$.
Moreover, $\sqrt{\det g(z)}={x}^{-n-1}\sqrt{\det \o{g}(z)}$ and $\sqrt{\det g(\td{z})}=\tx\,^{-n-1}\sqrt{\det \o{g}(\td{z})}$ with ${\det \o{g}(z)}, {\det \o{g}(\td{z})}\in C^{\infty}( M)$ and non-vanishing.
Finally, $\p_z\r\in x^{-1}C^\infty(M^2\setminus{\Di})$ and similarly for $\p_{\td{z}}\r$, thus \begin{align}
K_{\calN_g}(z,\td{z})\in x^n\tx\,^n C^\infty\left( M^2\backslash {\Di}\right).
\end{align}

\smallskip

Now we  examine the pullback $\b_0^* K_{\calN_g}$ of $K_{\calN_g}$ to $ M_0^2$.
First let $\calU$ be a neighborhood of $\ff\setminus \p \Di_0$, disjoint from the diagonal and $\rf$.
On $\calU$ we use the projective coordinates \eqref{eq:coord_lf}. 
By Proposition \ref{prop:distance_function}, in $\calU$ we have 	$\b_0^*{\r}=\ta-\log(s),$ $\td{\a}\in C^\infty(\calU).$
Thus the chain rule yields
\begin{equation}
\begin{aligned}
\b_0^*(\p_{x}\r,\p_{y}\r)=&\left(\td{x}^{-1}\left(\p_s\ta-s^{-1}\right),\;\td{x}^{-1}\p_W\ta\right)
=s^{-1}\tx^{-1}{\vphi},
{}\\ 
\b_0^*(\p_{\,\td{x}}\r,\p_{\,\td{y}}\r)=&\left(\p_{\td{x}}\ta-{s}{\td{x}^{-1}}\left(\p_s\ta-s^{-1}\right)-{\td{x}}^{-1}{W^\s}\p_{W^\s}\ta,\;\p_{\td{y}}\ta-{\td{x}}^{-1}\p_W\ta\right)
    =\tx^{-1}\vphi',\label{eq:chain} 
\end{aligned}
\end{equation}
where $\vphi,\, {\vphi'}$ have components in $C^\infty(\calU)$, and further
\begin{equation}\label{eq:mixed_hessian}
\begin{aligned}
	\b_0^*\p^2_{ x\tx}\r
=&{\tx^{-2}}\left(-\p_s\ta+\tx\p^2_{s\tx }\ta-s\p^2_{s}\ta-W^\l\p^2_{sW^\l }\ta\right)
=\tx^{-2}\psi_{00},\\ 
\b_0^*\p^2_{ x \td{y}^\t}{\r}=&{\tx^{-2}}\left(\tx\p^2_{s\td{y}^\t }\ta-\p^2_{sW^\t }\ta\right)
=\tx^{-2}\psi_{0\t},\\
\b_0^*\p^2_{y^\s \ty^\t}{\r}=&{\tx^{-2}}\left(\tx\p^2_{W^\s \ty^\t}\ta-\p^2_{W^\s W^\t}\ta\right)
=\tx^{-2}\psi_{\s\t},\\%
\b_0^*\p^2_{y^\s\tx}{\r}=
&{\tx^{-2}}\left(-\p_{W^\s}\ta+\tx\p^2_{ W^\s\tx}\ta-s\p^2_{W^\s s}\ta-W^\l\p^2_{W^\s W^\l}\ta\right)%
=\tx^{-2}\psi_{\s 0},%
\end{aligned}
\end{equation}
where $\psi_{ij}\in C^\infty(\calU)$. 
Note that $\p_\tx$, $\p_{\ty}$ have different meanings in the left and right hand sides of the above equations.
Since for $H\in \R^{d\times d}$ and $\l\in \R$ we have $\adj(\l H)=\l^{d-1}\adj(H)$, $\b_0^*(\adj(\p_{z\td{z}}^2\r))\in \tx^{-2n}C^\infty(\calU;\R^{(n+1)\times (n+1)})$.
On the other hand, $\b_0^*\sqrt{\det g(\td{z}\,)}=\tx^{-n-1}\td{g}_1$ and $\b_0^*\sqrt{\det g(z)}=s^{-n-1}\tx^{-n-1}\td{g}_2$ with $\td{g}_j\in C^\infty(\calU)$ and non-vanishing for $j=1,2$.
By \eqref{eq:simplified_kernel} we conclude that $\b_0^*K_{\calN_g}\in s^nC^\infty (\calU)$.
This shows that $\b_0^*K_{\calN_g}$ has the claimed behavior 
away from $\rf$ and $\Di_0$; moreover, the fact that \eqref{eq:kernel} is symmetric implies that this is also true 
away from $\lf$ and $\Di_0$.

\smallskip

We now examine $\b_0^*K_{\calN_g}$ in a neighborhood $\calW$ of a point in $\lf\cap \ff\cap \rf$ away from $\Di_0$.
Near such a point we have $|y-\ty|\neq 0$, hence at least one of the functions $y^\s-\ty\,^\s$ does not vanish.
We may assume without loss of generality that  $y^n-\ty\,^n>0$ and use coordinates
\begin{equation}\label{eq:coordinates_near_all_three}
 r=y^n-\ty\,^n, \: \displaystyle {\th}=\frac{x}{r},\:\displaystyle \td{\th}=\frac{\tx}{r},\:\displaystyle {\widehat{Y}}^{\hat{\l}}=\frac{y^{{{\hat{\l}}}}-\ty\,^{{\hat{\l}}}}{r},\: y,\quad {\hat{\l}}=1, \dots, n-1,	
\end{equation}
which are valid in $\calW$. In terms of these, $r$ is a defnining function for $\ff$ and $\th$, $\td{\th}$ are defining functions for $\lf$, $\rf$ respectively.
A computation using the chain rule yields
\begin{equation}
\begin{aligned}
    \b_0^*\p_x=r^{-1}\p_\th,&\quad \b_0^*\p_{y^\t}=r^{-1}\big((r\p_r-\td{\th}\p_{\,\td{\th}})\d_\t^n+V_\t\big), \quad V_\t\in \calV_b(M_0^2),\; d\td{\th}(V_\t)=d{r}({V}_\t)=0\\*
    \b_0^*\p_{\,\td{x}}=r^{-1}\p_{\,\td{\th}},&\quad 	\b_0^*\p_{\td{y}^\t}=r^{-1}\big(-(r\p_r-{\th}\p_{{\th}})\d_\t^n+\td{V}_\t\big), \quad \td{V}_\t\in \calV_b(M_0^2),\; d{\th}(\td{V}_\t)=d{r}(\td{V}_\t)=0.
\end{aligned}
\end{equation}
Here $ \calV_b(M_0^2)$ denotes smooth vector fields tangent to all faces of $M_0^2$.
In terms of \eqref{eq:coordinates_near_all_three}
$\b_0^*\r=\td{a}-\log(\th)-\log(\td{\th}),\; \td{a}\in C^\infty(\calW),$
so 
\begin{align}
    \b_0^*(\p_x\r,\p_y\r)=(-(r\th)^{-1},0)+r^{-1}\breve{\vphi}, 	\quad 		\b_0^*(\p_{\,\tx}\r,\p_{\,\ty}\r)=(-(r\td{\th}\,)^{-1},0)+r^{-1}\breve{\vphi}\,',\quad\label{eq:first_derivatives}
\end{align}
where $\breve{\vphi}, \breve{\vphi}\,'\in C^\infty(\calW;\R^{n+1})$.
Further,  $(r\p_r-{\th}\p_{{\th}})((r\th)^{-1})=0$ and $(r\p_r-\td{\th}\p_{\td{\th}})((r\td{\th})^{-1})=0$
imply $\b_0^* \p_{z\td{z}\,}^2\r\in r^{-2}C^{\infty}(\calW;\R^{(n+1)\times(n+1)})$, hence $\b_0^*\adj(\p^2_{z\td{z}}{\r})\in r^{-2n}C^\infty(
\calW;\R^{(n+1)\times(n+1)})$.
Noting that $\b_0^*\sqrt{\det g(z)}=r^{-n-1}\th^{-n-1}\breve{g}_1$ and  $\b_0^*\sqrt{\det g(\td{z})}=r^{-n-1}\td{\th}\,^{-n-1}\breve{g}_2
$
with $\breve{g}_j\in C^\infty(\calW)$ and non-vanishing, we use \eqref{eq:simplified_kernel} again and \eqref{eq:first_derivatives} to find that $\b_0^* K_{\calN_g}\in \th^n\td{\th}\,^nC^\infty(\calW)$.
We conclude that $\b_0^*K_{\calN_g}\in C^\infty({M_0^2\setminus \Di_0})$ and vanishes to order $n$ on $\lf$ and $\rf$.

\smallskip

We now examine the behavior of the pullback of \eqref{eq:kernel} by $\b_0 $ near $\p\Di_0$. 
First note that 
 $\b_0^*\r^2\big|_{(M_0^2)^\circ}$ vanishes exactly on $\Di_0\cap (M_0^2)^\circ$. 
By Proposition \ref{prop:distance_function}, $\b_0^*\r^2$ is smooth in a neighborhood of $\Di_0$, hence it also vanishes at $\p\Di_0$; moreover, it vanishes nowhere else on $\ff_p^\circ$:
this follows from  in \cite[Proposition 24]{MR3473907}, according to which for every $p\in \p M$, $\b_0^*\r\big|_{\ff_p^\circ}=\r_{h_p^\ell}(\cdot,e_p)$, where $\r_{h_p^\ell}$ is the hyperbolic distance induced by $h_p^\ell$ on $\ff_p^\circ$.
We now use a variant of the coordinates given by \eqref{eq:coord_lf} near $\Di_0$ and away from $\rf$; we use $(\td{z},Z)=(\td{z},(z-\td{z})/\tx)=(\tx,\ty,s-1,W)$.
In terms of those coordinates $\tx$ is a defining function for $\ff$ and $\Di_0$ is expressed as $\{Z=0\}$.
Observe that on $(M_0^2)^\circ$ one has
\begin{equation}
  \begin{aligned}
  \b_0^*\r^2\big|_{Z=0}=\b_0^*\big(\r^2\big|_{\{z=\td{z}\}}\big)=0,
  \quad\p_{Z^j}(\b_0^*\r^2)\big|_{Z=0}=\tx\b_0^*\left(\p_{{z}^j}(\r^2)\big|_{\{z=\td{z}\}}\right)=0,&\\
\p_{Z^iZ^j}(\b_0^*\r^2)\big|_{Z=0}=\tx^2\b_0^*\left(\p_{{z}^i{z}^j}(\r^2)\big|_{\{z=\td{z}\}}\right)=2\tx^2g_{ij}(\td{z})=2\o{g}_{ij}(\td{z}).\label{eq:partials}\qquad&
\end{aligned}
\end{equation}
\noindent By the smoothness of $\b_0^*\r^2$ near $\Di_0$, \eqref{eq:partials} holds all the way to the front face.
Thus by Taylor's Theorem, viewing $\td{z}$ as parameters, we write
\begin{align}\label{eq:distance_near_diag}
\b_0^*\r^2=&\o{g}_{ij}(\td{z})Z^iZ^j+b_{klm}(\td{z},Z)Z^kZ^lZ^m
\end{align}
where $b_{klm}(\td{z},Z)$ is smooth.
We now show that the expression
	$\hat{K}(z,\td{z}):=\frac{2|\det(\p_{z\td{z}}\r^2/2)|}{\sqrt{\det g(z)}\sqrt{\det g(\td{z})}}$
pulls back to a non-vanishing smooth function in a neighborhood of $\Di_0$, all the way to the front face.
Using computations as in \eqref{eq:mixed_hessian} with $\r$  replaced by $\r^2$, one can 
 conclude that $\tx^2\b_0^*\p_{z
 \td{z}}^2({\r}^2/2)$ is a smooth matrix valued function in a neighborhood of $\p\Di_0$ (note here that its behavior near $s=0$ is irrelevant for this computation). 
 Also, for $z,\td{z}\in \cM$ one has $\big|\det(\p_{z\td{z}}\r^2/2)|_{z=\td{z}}\big|=|\det g(\td{z})|$.
 Therefore, since $\tx^{2n+2}\b_0^*\big(\sqrt{\det g(\td{z})}\sqrt{\det g(\td{z}\,)}\,\big)$ is smooth and non-vanishing near $\Di_0$, $\b_0^*\hat{K}$ is smooth in a neighborhood of the lifted diagonal. 
 Moreover, $\hat{K}\big|_{\Di\cap \cM^2}\equiv 2$ implies that $\b_0^*\hat{K}\big|_{\Di_0}\equiv 2$.

The facts in the preceding paragraph together with
  a standard argument involving the Fourier transform imply that $\b_0^*K_{\calN_g}\in I^{-1}(M_0^2,\Di_0)$,  with principal symbol $\s_0^{-1}(\calN_g)=C_n|\td{\xi}|^{-1}_{\o{g}}$ for $\td{\xi}\neq 0$, where $\td{\xi}$ is the fiber variable for $N^*\Di_0$; near the zero section the principal symbol is smooth and modifying it in compact subsets of the fiber does not change the operator modulo $\Psi_0^{-\infty}(M)$.
Using the identification of $N^*\Di_0$ with ${}^0T^*M$ and the fact that the latter is trivialized by $\{dz^j/x\}$ near $\pM$, we can write invariantly $\s_0^{-1}(\calN_g)(z,\xi)=C_n|{\xi}|^{-1}_{{g^{-1}}}$, $(z,\xi)\in {}^0T^*M\setminus 0$; this agrees with the principal symbol computed in \cite{MR2068966}. 
Since $g$ defines a non-degenerate quadratic form in the fibers of ${}^0T^*M$, $\s_0^{-1}(\calN_g)$ is invertible on ${}^0T^*M$.
We have thus shown that $\calN_g\in \Psi_0^{-1,n,n}(M)$ and is elliptic, completing the proof.
\end{proof}

By Propositions \ref{prop:pseudodifferential} and \ref{prop:boundedness} it follows immediately that for $s\geq 0$
$$\calN_g:x^\d H_0^s(M;\O_0^{1/2})\to x^{\d'} H_0^{s+1}(M;\O_0^{1/2})$$ is bounded if $\d>-n/2$, $\d'<n/2$ and $\d'\leq \d$.
We can now prove a boundedness property for the X-ray transform showing that one can extend it to larger weighted $L^2$ spaces than the ones that appeared in Section \ref{sec:geodesic_flow}. 
We use notations as in Lemma \ref{lm:sobolev_restriction}.

\begin{corollary}\label{cor:boundedness_I}
Let $(\cM^{n+1},g)$ be a simple AH manifold.
If $\d'< \d,\;$ $\d'<0$ and $\d>-n/2$ the X-ray transform is bounded:
\begin{equation}
	I:x^\d L^2(M,dV_g)\to \lg \eta\rg^{-\d'}_hL^2(\p_-S^*M,d\l_\p).
\end{equation}
\end{corollary}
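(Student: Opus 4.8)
The plan is to reduce the stated bound to a weighted $L^2$ estimate for $If$ on the incoming boundary and to prove that estimate by a Cauchy--Schwarz argument along geodesics, the sharp exponent $-n/2$ arising from an $n$-dimensional fibre integral. Since $C_c^\infty(\cM)$ is dense in $x^\d L^2(M,dV_g)$, it suffices to argue for $f\in C_c^\infty(\cM)$; then $If\in C^\infty_{c,\oX}(S^*M)$ and all quantities below are finite a priori, and via \eqref{eq:identification} we regard $If$ as a function on $\p_-S^*M$. Using the notation of Lemma \ref{lm:sobolev_restriction}, the squared target norm is by definition
\[
  \|If\|^2_{\lg\eta\rg_h^{-\d'}L^2(\p_-S^*M;d\l_\p)}=\int_{\p_-S^*M}\lg\eta\rg_h^{2\d'}\,|If|^2\,d\l_\p,
\]
so it is enough to bound this by $C\,\|f\|^2_{x^\d L^2(M,dV_g)}$ (that $\d'<0$ will guarantee positivity of the exponent $s$ appearing below, consistently with the hypothesis of Lemma \ref{lm:sobolev_restriction}).

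I would fix a real parameter $\s$ with $-2\d<\s<n$; such $\s$ exists \emph{exactly} because $\d>-n/2$. Writing $If$ as in \eqref{eq:xray_finite_interval} and splitting the integrand as $f/x=\big((x^{-\d}f)\,x^{-\s/2}x^{-1/2}\big)\cdot\big(x^{\d}x^{\s/2}x^{-1/2}\big)$, Cauchy--Schwarz in $\t$ gives
\[
  |If(w)|^2\ \le\ B(w)\int_0^{\t_+}|x^{-\d}f|^2\,x^{-\s}\,\frac{d\t}{x},\qquad B(w):=\int_0^{\t_+}x^{2\d+\s}\,\frac{d\t}{x}=I\big(x^{2\d+\s}\big)(w).
\]
Since $2\d+\s>0$, the computation in the proof of Lemma \ref{lm:sobolev_restriction} yields $B(w)\approx\lg\eta\rg_h^{-(2\d+\s)}(w)$. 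As $\lg\eta\rg_h^{2\d'}(w)B(w)$ is constant along the orbit through $w$, I fold it into the remaining $\t$-integral and apply Santal\'o's formula \eqref{eq:santalo}, obtaining
\[
  \int_{\p_-S^*M}\lg\eta\rg_h^{2\d'}|If|^2\,d\l_\p\ \le\ C\int_{\cM}|x^{-\d}f|^2\,x^{-\s}\Big(\int_{S^*_z\cM}\lg\eta\rg_h^{\,2\d'-2\d-\s}\,d\mu_g\Big)dV_g,
\]
where in the inner integral $\lg\eta\rg_h$ is evaluated at the incoming endpoint $w(z,\xi)$ of the geodesic through $(z,\xi)$.

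The crux, and the main obstacle, is the inner fibre integral. Here I would use that the tangential momentum is asymptotically conserved along geodesics, so that for $(z,\xi)\in S^*_z\cM$ the incoming frequency $\lg\eta\rg_h(w(z,\xi))$ is comparable to $\lg\eta_z\rg_{h_x}$, the size of the tangential part of $\xi$ at $z$ (this is exact in the hyperbolic model and, for simple AH metrics, is the leading behaviour recorded in \cite[Lemma 2.8]{2017arXiv170905053G}). On $S^*_z\cM$ this tangential size ranges over $[0,x^{-1}]$, and rescaling the fibre to the standard sphere gives, for $s>0$,
\[
  \int_{S^*_z\cM}\lg\eta_z\rg_{h_x}^{-s}\,d\mu_g\ \approx\ x^{\min(s,\,n)}
\]
(with a logarithmic correction when $s=n$). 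Taking $s=2\d-2\d'+\s$, which is $>0$ because $\d>\d'$ and $\s>-2\d$ together with $\d'<0$, the inner factor is $\le C\,x^{-\s}x^{\min(2\d-2\d'+\s,\,n)}$, and this is bounded precisely because $2\d-2\d'+\s\ge\s$ (as $\d>\d'$) and $\s\le n$; hence the right-hand side is $\le C\,\|f\|^2_{x^\d L^2(M,dV_g)}$, as desired. The work lies entirely in making the frequency comparability rigorous in the non-hyperbolic AH setting and in controlling the $n$-dimensional fibre measure uniformly up to $\p M$, which is exactly where the dimensional threshold $-n/2$ (the same one visible in $\calN_g\in\Psi_0^{-1,n,n}(M)$) is produced.
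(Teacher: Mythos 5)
Your overall scheme---reduce by density to $f\in C_c^\infty(\cM)$, apply Cauchy--Schwarz along each geodesic against an auxiliary weight $x^{2\d+\s}$ with $-2\d<\s<n$, evaluate $I(x^{2\d+\s})\approx\lg\eta\rg_h^{-(2\d+\s)}$ via the computation in the proof of Lemma \ref{lm:sobolev_restriction}, and unfold by Santal\'o's formula \eqref{eq:santalo} to a fibre integral over $S_z^*\cM$---is coherent, and the exponent bookkeeping at the end is correct. The genuine gap is the step you yourself flag as the crux: the claim that the incoming frequency $\lg\eta\rg_h\big(w(z,\xi)\big)$, where $w(z,\xi)\in\p_-S^*M$ is the incoming endpoint of the orbit through $(z,\xi)$, is uniformly comparable from below to the tangential size $\lg\eta_z\rg_{h_x}$ of $\xi$, for \emph{all} $(z,\xi)\in S^*\cM$ on a general simple AH manifold. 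This is asserted, not proved, and the citation you offer does not supply it: \cite[Lemma 2.8]{2017arXiv170905053G} describes, to leading order, the height $x\circ\o{\phi}_\t$ along geodesics launched from $\p_-S^*M$ with \emph{large} $|\eta|_h$; it says nothing about orbits of bounded frequency, it does not control the tangential momentum at interior points near the orbit's endpoints (where the $O(|\eta|_h^{-2})$ errors are of the same size as $|\eta|_h^{-1}\sin(\a_{(z,\xi)}(\t))$ itself), and it gives no uniformity in $z$ up to $\p M$, which your fibre estimate $\int_{S_z^*\cM}\lg\eta_z\rg_{h_x}^{-s}\,d\mu_g\approx x^{\min(s,n)}$ also requires. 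Since your final inequality needs precisely the lower bound $\lg\eta(w(z,\xi))\rg_h\geq c\,\lg\eta_z\rg_{h_x}$, essentially all the analytic content of the corollary sits in this unproven step. (The claim is true and the gap is fillable: from \eqref{eq:Hamiltonian_xbar} one gets $\big|\p_\t |\eta|^2_{h_x}\big|\leq C|\eta|^2_{h_x}$ along orbits in the collar, using $x|\eta|_{h_x}\leq 1$, and Gronwall combined with a uniform upper bound on the transit time $\t_++\t_-$ --- finite on compact subsets of $\p_-S^*M$ by continuity and $O(|\eta|_h^{-1})$ at fibre infinity by the short-geodesic analysis --- yields the two-sided comparability; but this argument has to be made.)

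For comparison, the paper performs the same kind of Cauchy--Schwarz (with weight $x^{2\e}$, $0<\e<\min\{\d+n/2,\d-\d'\}$, using that $I(x^{2\e})$ is uniformly bounded), but then sidesteps any pointwise estimate along geodesics: after decomposing $d\l=dV_g\,d\mu_g$, the resulting quantity is recognized as $\|\calN_g(x^{-2\e}|f|^2)\|_{x^{2\d'}L^1(M,dV_g)}$, which is controlled by a weighted $L^1\to L^1$ mapping property for operators in $\Psi_0^{-1,\calE}(M)$ together with $\calN_g\in\Psi_0^{-1,n,n}(M)$ from Proposition \ref{prop:pseudodifferential}; the interior norm $\|If\|_{x^{\d'}L^2(S^*\cM;d\l)}$ is then converted to the boundary norm by Lemma \ref{lm:sobolev_restriction}. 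In other words, the order-$n$ vanishing of the Schwartz kernel of $\calN_g$ at the side faces of $M_0^2$ --- obtained from the distance-function asymptotics of Proposition \ref{prop:distance_function} --- plays exactly the role your frequency comparability was meant to play, which is why the statement is a corollary of Proposition \ref{prop:pseudodifferential}. If completed, your route would give a more elementary, purely dynamical proof independent of the 0-calculus; as written, it is incomplete at its central step.
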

\begin{proof}
We will show that for $\d$, $\d'$ as in the statement  there exists a constant $C$ such that for any $f\in C_c^\infty(\cM)$ one has
\begin{equation}\label{eq:mapping_of_I}
	\|If\|_{x^{\d'} L^2(S\cM;d\l)}\leq C\|f\|_{x^\d L^2(M,dV_g)}.
\end{equation}
Since for $f\in C_c^\infty(\cM)$ one has $If\in  C_{\oX}^\infty(S^*M)\cap {x^{\d'}L^2(S^*\cM;d\l)}$ as explained in the proof of Lemma \ref{lm:sobolev_restriction}, the latter applies for $If$ to show that if  \eqref{eq:mapping_of_I} is known then one has $\|If\|_{\lg \eta\rg^{-\d'}_hL^2(\p_-S^*M,d\l_\p)}\leq C\|f\|_{x^{\d} L^2(M,dV_g)}$, yielding the result by density.

First let $0<\e<\min\{\d+n/2,\d-\d'\}$ and note that for each fixed $\e$ the expression $I(x^{2\e})(z,\xi)=\int_\R x^{2\e}\circ\phi_t(z,\xi)dt$ is uniformly bounded on $S^*\cM$, by the proof of Lemma \ref{lm:sobolev_restriction}.
 Now for $f\in C_c^\infty(\cM)$ apply Cauchy-Schwarz to find
 \begin{equation}
\begin{aligned}
  \|If\|&_{x^{\d'} L^2(S\cM;d\l)}^2=\int_{S^*\cM}x^{-2\d'}|I f(z,\xi)|^2d\l=\int_{S^*\cM}x^{-2\d'}\Big|\int_\R  f(\phi_t(z,\xi))dt\Big|^2d\l\\
\leq &\int_{S^*\cM}x^{-2\d'}\int_\R x^{2\e}\circ\phi_t(z,\xi)dt\int_\R  |(x^{-\e}f)(\phi_t(z,\xi))|^2dt\;d\l\\
&\leq C\int_{S^*\cM}x^{-2\d'}\int_\R |(x^{-\e}f)(\phi_t(z,\xi))|^2dt\;d\l
= C\int_{\cM}x^{-2\d'}\int_{S_{z}^*\cM}I\big(x^{-2\e}|f|^2\big)d\mu_g\;dV_g(z)\\
&\quad=C \|\calN_g(x^{-2\e}| f|^2)\|_{x^{2\d'}L^1(M,dV_g)}.
\end{aligned}   
 \end{equation}
Now if $\d'':=\d-\e$ the choice of $\e$, $\d$ and $\d'$ imply that $2\d'<0$, $2\d''>-n$ and $2\d'\leq 2\d''$. 
On the other hand, an argument similar (but simpler) to the one of 
used in \cite{MR1133743} to show
Proposition \ref{prop:boundedness}, (also see   \cite[Appendix 1]{He_2019}), shows that if $P\in \Psi_{0}^{-1,\calE}(M)$ with $\Re(E_\ell)>n+\s'$, $\Re(E_r)>-\s$ and $\s'-\s\leq \Re(E_f)$ then $P:x^\s L^1(M,dV_g)\to x^{\s'} L^1(M,dV_g)$ is bounded.
Hence the fact that $\calN_g\in \Psi_0^{-1,n,n}(M)$ implies that
\begin{equation}
	\|\calN_g(x^{-2\e}| f|^2)\|_{x^{2\d'}L^1(M,dV_g)}\leq C\|x^{-2\e}| f|^2\|_{x^{2\d''}L^1(M,dV_g)}
=C\| f\|^2_{x^{\d}L^2(M,dV_g)}
\end{equation}
and this finishes the proof.
\end{proof}

\begin{remark}
	By Corollary \ref{cor:boundedness_I} and \eqref{eq:adjoint} one also has that $I^*:\lg \eta\rg^{\d'}_hL^2(\p_-S^*M,d\l_\p)\to x^{-\d} L^2(M,dV_g)$ is bounded for  $\d'< \d,\;$ $\d'<0$ and $\d>-n/2$.

\end{remark}

\section{The Model  Operator}
\label{sec:model_operator}

In this section we show that the model operator of $\calN_g$ at a point $p\in \p M$ can be identified with the normal operator $\calN_{h}$ on the Poincar\'e hyperbolic ball $(\B^{n+1},h)$.
This operator was studied in \cite{MR1104811} and an explicit inversion formula was computed for it using the spherical Fourier transform; using this formula we will show that $\calN_h^{-1}\in\Psi_0^{1,n+1,n+1}(\o{\B^{n+1}})$.
In what follows we always assume that  a choice of coordinates has been made with respect to a point of interest $p\in \pM$, such that the hyperbolic metric $h_p$ in \eqref{eq:hyperbolic_metric} takes the form $h_p=u^{-2}({du^2+|dw|^2})$ with respect to induced linear coordinates $(u,w)$ on ${T_p^+M} $.

The following is an analog of Proposition 2.17 in \cite{MR916753}, which shows that for each $p\in \pM$ the model operator of the Laplacian corresponding to an AH metric $g$ on $\cM$ is the hyperbolic Laplacian on $(T_p^+M,h_p)$:

\begin{proposition}\label{prop:normal_operator}
	For any $p\in \p M$ the model operator $N_p(\calN_g)$ on ${T_p^+M} $ is given by $\calN_{h_p}$, the normal operator corresponding to the X-ray transform on $({T_p^+M},h_p) $.
\end{proposition}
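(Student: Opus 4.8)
The plan is to compute the model operator $N_p(\calN_g)$ using its characterization as the restriction of the lifted Schwartz kernel to the front face, as set up in Section \ref{ssec:model_operator_background}, and to match this with the explicit kernel of the normal operator $\calN_{h_p}$ on the hyperbolic space $(T_p^+M, h_p)$. The key observation is that by Lemma \ref{lm:kernel_of_Ag}, the kernel $K_{\calN_g}$ is determined entirely by the distance function $\rho$ and the metric volume densities, so its behavior at the front face is governed by the behavior of $\b_0^*\rho$ there. By the result quoted in the proof of Proposition \ref{prop:pseudodifferential} (from \cite[Proposition 24]{MR3473907}), one has $\b_0^*\rho\big|_{\ff_p^\circ} = \rho_{h_p^\ell}(\cdot, e_p)$, i.e.\ the lifted distance restricted to the interior of the front face is precisely the hyperbolic distance induced by $h_p^\ell$. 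This is the crucial geometric input that ties the front-face data to hyperbolic geometry.

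First I would recall from \eqref{eq:normal_operator_coordinates} and Lemma \ref{lm:agreement} that $N_p(\calN_g)$ acts by convolution with $F_p(\calN_g) = \b_0^*\k_{\calN_g}\big|_{\ff_p}$, and that under the identification of $\ff_p^\circ$ with $T_p^+M \cong \H^{n+1}$ via the maps $f_p^\ell$ and $f_p^v$, this convolution is written explicitly as in \eqref{eq:right_action}. So the task reduces to computing $\b_0^* K_{\calN_g}^\ell(0,0,s,W)$, the value of the lifted kernel on the front face. Next I would observe that on $\cM^2$ the kernel $K_{\calN_g}$ has exactly the same functional form \eqref{eq:kernel} as the kernel of the hyperbolic normal operator $\calN_{h_p}$, the only difference being which metric and distance function enter. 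Since the metric $g$ restricts at the front face to the hyperbolic metric $h_p$ (by the construction of $h_p$ in \eqref{eq:hyperbolic_metric} and the fact established in the proof of Proposition \ref{prop:pseudodifferential} that $\b_0^*\rho^2$ restricted to $\ff_p$ is the squared hyperbolic distance), the restriction of $\b_0^* K_{\calN_g}$ to $\ff_p$ must coincide with the corresponding kernel built from $h_p$. In other words, freezing coefficients at $p$ replaces $g$ by $h_p$ and $\rho$ by $\rho_{h_p}$ throughout \eqref{eq:kernel}.

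To make this rigorous I would proceed in coordinates centered at $p$, writing $\b_0^*\rho^2$ using the expansion \eqref{eq:distance_near_diag} near $\Di_0$ and the identification on $\ff_p^\circ$ away from $\Di_0$, and verify that every ingredient of \eqref{eq:kernel} — the mixed Hessian determinant $\det(\p_{z\td z}\rho^2/2)$, the adjugate expression in \eqref{eq:simplified_kernel}, and the volume factors $\sqrt{\det g}$ — restricts at the front face to precisely the corresponding quantity computed from $h_p$ on $(T_p^+M, h_p)$. The leading-order behavior at $\Di_0$ is controlled by \eqref{eq:partials}, which shows that the second-order Taylor coefficient of $\b_0^*\rho^2$ at the lifted diagonal is $2\o g_{ij}(\td z)$, restricting at $p$ to the Euclidean coefficient of the hyperbolic metric; this guarantees the conormal singularity at $\Di_0$ of $\calN_{h_p}$ matches. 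I would then conclude that $N_p(\calN_g)$ is the convolution operator whose kernel is the lift of $\calN_{h_p}$, which by the same identification \eqref{eq:right_action} applied to the hyperbolic space $(T_p^+M, h_p)$ is exactly $\calN_{h_p}$.

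The main obstacle I expect is the bookkeeping in matching the densities and the convolution structure: one must track carefully how the half-density trivializations $\g_0$ and $\g_p$ transform under the blow-down, how the left action of $G_p^\ell$ on $\ff_p^\circ$ corresponds to hyperbolic isometries, and that the invariant distance $\rho_{h_p^\ell}(\cdot, e_p)$ on $\ff_p^\circ$ genuinely agrees with the hyperbolic distance on $T_p^+M$ under the chosen identification, so that the abstract convolution \eqref{eq:convolution_model} really reproduces the geometric X-ray normal operator $\calN_{h_p}$ rather than merely a formally similar integral operator. The cited result $\b_0^*\rho\big|_{\ff_p^\circ} = \rho_{h_p^\ell}(\cdot, e_p)$ from \cite{MR3473907} is what resolves this obstacle, reducing the problem to recognizing the resulting kernel as that of $\calN_{h_p}$.
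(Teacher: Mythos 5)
Your proposal is correct and follows essentially the same route as the paper's proof: restrict the lifted kernel to the front face, use \cite[Proposition 24]{MR3473907} to identify $\b_0^*\r\big|_{\ff_p^\circ}$ with $\r_{h_p^\ell}(\,\cdot\,,e_p)$, match the mixed Hessian determinant and volume factors via \eqref{eq:chain} and \eqref{eq:mixed_hessian}, and recognize the resulting front-face kernel as that of $\calN_{h_p}$. The one step you relegate to ``bookkeeping'' but which the paper verifies explicitly is the $G_p$-invariance of the hyperbolic kernel (invariance under $(u,w,\td{u},\td{w})\mapsto(u,w+a,\td{u},\td{w}+a)$ and $(u,w,\td{u},\td{w})\mapsto(\l u,\l w,\l\td{u},\l\td{w})$, checked from $\cosh\r_{h_p}=1+\tfrac{|w-\td{w}|^2+|u-\td{u}|^2}{2u\td{u}}$), which is what allows passage from the convolution kernel evaluated at the quotient point $\big(u/\td{u},(w-\td{w})/\td{u}\big)$ relative to $(1,0)$ to the genuine two-point kernel of $\calN_{h_p}$; this invariance is not a consequence of the distance identification from \cite{MR3473907} alone, though it is a routine consequence of $G_p$ acting by $h_p$-isometries.
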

\begin{proof}
We will show that in coordinates $(\td{x},\td{y},s,W)$ (see \eqref{eq:coord_lf}) we have 
\begin{equation}\label{eq:restriction_model_op}
\begin{aligned}
  \b_0^* \big(K_{\calN_g}(z,\td{z})\cdot &\g_0(z)\otimes \g_0(\td{z})\big)\big|_{\ff_p^\circ}\\* 
  =&
  \frac{2|\det(\p^2_{q\td{q}}\,\r_{h_p^\ell}^2/2)|}{\r_{h_p^\ell}^{n}\;\pi_\ell^*\sqrt{\det h_p^\ell}\;\pi_r^*\sqrt{\det h_p^\ell}}\Bigg|_{(q,\td{q})=((s,W),(1,0))}\cdot\left|\frac{dsdW}{s}\right|^{1/2}\left|\frac{dudw}{u^{n+1}}\right|^{1/2}
  \end{aligned}
\end{equation}
with the interpretation of \eqref{eq:restriction2}, where $\r_{h_p^\ell}(q,\td{q})$ denotes the $h_p^\ell$-distance function on $\ff_p^\circ$.
This will imply that in linear coordinates $v=(u,w)$, $\td{v}=(\td{u},\td{w})$ on $T_p^+M$ we have as in \eqref{eq:right_action}
\begin{align}
  N_p(\calN_g)(f\cdot \g_p)(v)=&\int_{T_p^+M}\frac{2|\det(\p^2_{v\td{v}}\r_{h_p}^2/2)|}{\r_{h_p}^{n}\;\pi_\ell^*\sqrt{\det h_p}\;\pi_r^*\sqrt{\det h_p}}
  \Bigg|_{\big((\frac{u}{\td{u}},\frac{w-\td{w}}{\td{u}}),(1,0)\big)}
  f(\td{u},\td{w})\frac{|d\td{u}d\td{w}|}{\td{u}^{n+1}}\cdot \g_p\\
  =&\int_{T_p^+M}\frac{2|\det(\p^2_{v\td{v}}\r_{h_p}^2/2)|}{\r_{h_p}^{n}\;\pi_\ell^*\sqrt{\det h_p}\;\pi_r^*\sqrt{\det h_p}}\Bigg|_{(v,\td{v})}f(\td{v})dV_{h_p}(\td{v})\cdot \g_p=\calN_{h_p}(f\cdot \g_p).
\end{align}
To see the second equality, it suffices to show that $\frac{2|\det(\p_{v\td{v}}\r_{h_p}^2/2)|}{\r_{h_p}^{n}\;\pi_\ell^*\sqrt{\det h_p}\;\pi_r^*\sqrt{\det h_p}}\Big|_{(u,w,\td{u},\td{w})}$ is invariant under the transformations $(u,w,\td{u},\td{w})\mapsto (u,w+a,\td{u},\td{w}+a)$, $a\in \R^n$, and $(u,w,\td{u},\td{w})\mapsto (\l u,\l w,\l\td{u},\l\td{w})$, $\l\in \R$. 
A function $F(v,\td{v})$ on $(\R^+\times \R^n)^2$ has this property exactly when it is annihilated by $\p_{w^\t}+\p_{\td{w}^\t}$ and $u\p_u+w^\s\p_{w^\s}+\td{u}\p_{\td{u}}+\td{w}^\t\p_{\td{w}^\t}$; the fact that $[u\p_u,u\p_{w^\s}]=u\p_{w^\s}=-[w^\t\p_{w^\t},u\p_{w^\s}]$ %
 can be used to show that if $F$ has this property, the same is true of $u\p_v F $, $\td{u}\p_{\td{v}}F$.
By the explicit formula $\cosh \r_{h_p}((u,w),(\td{u},\td{w}))=1+\frac{|w-\td{w}|^2+|u-\td{u}|^2}{2u\td{u}}$ for the $h_p$-distance on $T_p^+M$ when $h_p=u^{-2}({du^2+|dw|^2})$, $\r_{h_p}$ has the required invariance, thus $u\td{u}\p^2_{v\td{v}}\r_{h_p}^2/2$ also does, and the same is true of $(u^{n+1}\td{u}^{n+1}\r_{h_p}^{n}\;\pi_\ell^*\sqrt{\det h_p}\;\pi_r^*\sqrt{\det h_p})$.

We now show \eqref{eq:restriction_model_op}. By  \cite[Proposition 24]{MR3473907}, $\b_0^*\r\big|_{\ff_p^\circ}(q)=\r_{h_p^\ell}(q,e_p)$.
As mentioned earlier, it can be arranged that 
in terms of coordinates $(s,W)$ on $\ff_p^\circ$ as in \eqref{eq:coord_lf} we have  $h_p^\ell=s^{-2}(ds^2+|dW|^2)$, hence again
 $\rho_{h_p^\ell}\big((s,W),(\td{s},\td{W})\big)=\rho_{h_p^\ell}\big((s/\td{s},(W-\td{W})/\td{s}),(1,0)\big)$. Using this, one checks that at $(q,e_p)=((s,Y),(1,0))$ we have
  $\p_{\td{s}}\,\rho_{h_p^\ell}=-s\p_s\rho_{h_p^\ell}-W^\l \p_{W^\l}\rho_{h_p^\ell}
    ,$ $\p_{\td{W}^\l}\rho_{h_p^\ell}=-\p_{W^\l}\rho_{h_p^\ell}.$
Those facts together with \eqref{eq:chain}, \eqref{eq:mixed_hessian} yield that at $q=(s,W)$ 
\begin{align}
\b_0^*\big(\tx(\p_\tx\r,\p_{\ty}\r)\big)\big|_{\ff_p^\circ}=(\p_{\td{s}}\rho_{h_p^\ell},\p_{\td{W}}\rho_{h_p^\ell})\Big|_{(\,\cdot\,,e_p)},
&\quad
\b_0^*\big(\tx(\p_{x}\r,\p_{y}\r)\big)\big|_{\ff_p^\circ}=(\p_s \rho_{h_p^\ell},\p_W\rho_{h_p^\ell})\Big|_{(\,\cdot\,,e_p)},\\
  \b_0^*(\tx^2\p^2_{\tx x}\r)\big|_{\ff_p^\circ}=\p_{s\td{s}}\rho_{h_p^\ell}\Big|_{(\,\cdot\,,e_p)},& \quad 
  \b_0^*(\tx^2\p^2_{\ty^\s x}{\r})\big|_{\ff_p^\circ}=\p^2_{s\td{W}^\s}\rho_{h_p^\ell}\Big|_{(\,\cdot\,,e_p)},\\
\b_0^*(\tx^2\p^2_{\ty^\s y^\t}{\r})\big|_{\ff_p^\circ}= \p^2_{W^\s\td{W}^\t} \rho_{h_p^\ell}\Big|_{(\,\cdot\,,e_p)},
&\quad
 \b_0^*(\tx^2\p^2_{\tx y^\t}{\r})\big|_{\ff_p^\circ}=\p^2_{\td{s}\,W^\s} \rho_{h_p^\ell}\Big|_{(\,\cdot\,,e_p)},
\end{align}
hence by \eqref{eq:determinant}, $\b_0^*\big(\tx^{2n+2}|\det(\p^2_{z\td{z}}\r_{h}^2/2)|\big)\big|_{\tx=0}=|\det(\p_{q\td{q}}^2\r_{h_p^\ell}^2/2)|\big|_{(\,\cdot\,,e_p)}$.
On the other hand, we have $\b_0^*\big(\td{x}^{2n+2}\sqrt{\det g(z)\det g(\td{z})}\,\big)\big|_{\tx=0}={s^{-n-1}}=
\pi_\ell^*\sqrt{\det h_p^\ell}\;\pi_r^*\sqrt{\det h_p^\ell}\Big|_{((s,W),(1,0))}$.
Since $\pi_\ell^*\g_0\otimes\pi_r^* \g_0\big|_{(p,p)}$ can be identified with
$\left|\frac{dsdW}{s}\right|^{1/2}\left|\frac{dudw}{u^{n+1}}\right|^{1/2}$ as explained in Section \ref{ssec:model_operator_background}, we obtain \eqref{eq:restriction_model_op}.
\end{proof}

 As mentioned in \sectionsymbol \ref{ssec:model_operator_background}, for each $p\in \p M$ the model operator $\calN_{h_p}$ can be equivalently realized as an operator on the Poincar\'e ball $(\B^{n+1},h=\frac{4|dz|^2}{(1-|z|^2)^2})$ . 
The following proposition is essentially an immediate consequence of the results in \cite{MR1104811}.
Henceforth we write $\B$ instead of $\B^{n+1}$ (i.e. without a superscript for the dimension).

\begin{proposition}\label{prop:normal_operator_space}
	For any $p\in \p M$ the model operator $N_p(\calN_g)$ can be identified with the operator 
	$\calN_{h}:C_c^\infty(\B;\O_0^{1/2})\to C^{-\infty}({\B};\O_0^{1/2})$ on $(\B,h)$, which for $\d\in (-n/2,n/2)$
	extends continuously to an operator 
	$	\calN_{h}:x^\d L^2(\o{\B};\O_0^{1/2})\to x^\d H_0^1(\o{\B};\O_0^{1/2})$.
	The operator $\calN_h$ has a two sided inverse 
	$\calN_{h}^{-1}\in \Psi_0^{1,n+1,n+1}\big(\o{\mathbb{B}}\big)$ such that $\calN_{h}^{-1}\calN_{h}=\calN_{h}\calN_{h}^{-1}=Id$ on $x^\d L^2(\o{\B};\O_0^{1/2})$ for $\d\in (-n/2,n/2)$.
\end{proposition}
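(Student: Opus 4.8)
The plan is to combine the geometric identification of the model operator with the explicit inversion of the hyperbolic normal operator from \cite{MR1104811}, and then to read off the $0$-calculus membership of the inverse from the structure of its Schwartz kernel. First I would settle the identification: Proposition \ref{prop:normal_operator} already gives $N_p(\calN_g)=\calN_{h_p}$ on $(T_p^+M,h_p)$, and the Cayley transform realizes $(T_p^+M,h_p)$ isometrically as $(\B,h)$, so $N_p(\calN_g)$ is identified with $\calN_h$. Since $\calN_g\in\Psi_0^{-1,n,n}(M)$ by Proposition \ref{prop:pseudodifferential} and $\Re(E_f)\geq 0$, the discussion at the end of Section \ref{ssec:model_operator_background} shows $\calN_h\in\Psi_0^{-1,n,n}(\o\B)$, elliptic with the same principal symbol. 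The mapping property $\calN_h:x^\d L^2(\o\B;\O_0^{1/2})\to x^\d H_0^1(\o\B;\O_0^{1/2})$ for $\d\in(-n/2,n/2)$ is then immediate from Proposition \ref{prop:boundedness} applied with $m=-1$, $\Re(E_\ell)=\Re(E_r)=n$ and $\Re(E_f)=0$, whose hypotheses $\Re(E_r)>n/2-\d$ and $\Re(E_\ell)>\d+n/2$ reduce exactly to $|\d|<n/2$.

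For the inverse I would use that $\calN_h$ is invariant under the isometry group of $\H^{n+1}$, hence a convolution operator whose Schwartz kernel is a radial distribution $\kappa_0(\r)$, a function of the hyperbolic distance $\r=\r_h$ alone. Under the spherical Fourier transform for radial functions (see \cite{MR1723736}) such an operator acts as a multiplier $m(\l)$, which \cite{MR1104811} computes explicitly and shows to be nonvanishing; the two-sided inverse $\calN_h^{-1}$ is then the convolution operator with multiplier $m(\l)^{-1}$, again isometry invariant, with a radial kernel $\kappa(\r)$. In particular $\calN_h^{-1}$ is its own model operator, so once I know its kernel lies in the correct polyhomogeneous class its front-face index set is automatically $\o{\{(0,0)\}}$.

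The main step, and the principal obstacle, is to show $\calN_h^{-1}\in\Psi_0^{1,n+1,n+1}(\o\B)$ by analyzing $\kappa(\r)$, understood as the coefficient against $\pi_\ell^*\g_0\otimes\pi_r^*\g_0$ as in Lemma \ref{lm:kernel_of_Ag}. Inverting an elliptic operator of order $-1$, the kernel $\kappa$ is conormal of order $+1$ to the diagonal, with principal symbol $C_n^{-1}|\xi|_{g}$; since $\b_0^*\r^2$ is smooth up to the front face near $\Di_0$ by Proposition \ref{prop:distance_function}, this conormal singularity lifts to $I^1((\o\B)_0^2,\Di_0)$, smooth down to $\ff$. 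Away from the diagonal $\kappa$ is smooth, and the crux is its asymptotics as $\r\to\infty$: extracting from the inversion formula of \cite{MR1104811} that $\kappa(\r)$ is polyhomogeneous with leading behavior $e^{-(n+1)\r}$, I would then use $\b_0^*\r=\a-\log x_\ell-\log x_r$ from Proposition \ref{prop:distance_function}, so that $e^{-\r}\sim x_\ell x_r$, to convert this into vanishing of order $n+1$ with a polyhomogeneous expansion at $\lf$ and $\rf$. This identifies the side-face index sets as $n+1$ and completes $\calN_h^{-1}\in\Psi_0^{1,n+1,n+1}(\o\B)$. The delicate point is matching the precise exponential decay rate produced by $m(\l)^{-1}$ to the index $n+1$ (rather than the value $n$ appearing for $\calN_h$ itself), which is governed by the relevant indicial root and must be read carefully from the spherical-transform inversion; this is where the concrete input of \cite{MR1104811} is indispensable.

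Finally I would assemble the two-sided inverse statement on $x^\d L^2$. By Proposition \ref{prop:boundedness} applied to $\calN_h^{-1}\in\Psi_0^{1,n+1,n+1}(\o\B)$ with $m=1$ and $\Re(E_\ell)=\Re(E_r)=n+1$, the map $\calN_h^{-1}:x^\d H_0^1(\o\B;\O_0^{1/2})\to x^\d L^2(\o\B;\O_0^{1/2})$ is bounded for $\d\in(-n/2,n/2)$. The compositions $\calN_h^{-1}\calN_h$ and $\calN_h\calN_h^{-1}$ are defined by Proposition \ref{prop:composition}, since in either order $\Re(E_r+F_\ell)=(n+1)+n>n$, and they lie in $\Psi_0^{0,\calW}(\o\B)$. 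By the inversion formula of \cite{MR1104811} both compositions equal the identity on $C_c^\infty(\B;\O_0^{1/2})$; since this space is dense in $x^\d L^2(\o\B;\O_0^{1/2})$ and all operators involved are bounded on the relevant weighted spaces, both compositions equal $Id$ on $x^\d L^2(\o\B;\O_0^{1/2})$ for $\d\in(-n/2,n/2)$, as claimed.
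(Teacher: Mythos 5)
Your first two steps (the identification $N_p(\calN_g)=\calN_{h_p}\leftrightarrow\calN_h$ via Proposition \ref{prop:normal_operator} and the Cayley transform, and the mapping property from Proposition \ref{prop:boundedness}) match the paper, and your final density/composition argument is also how the paper concludes. The problem is the middle, which is the heart of the proposition: you never actually establish $\calN_h^{-1}\in\Psi_0^{1,n+1,n+1}(\o\B)$. Your plan is to define the inverse abstractly as the isometry-invariant convolution operator with spherical multiplier $m(\l)^{-1}$ and then to ``extract'' from \cite{MR1104811} that its radial kernel $\kappa(\r)$ is polyhomogeneous with leading behavior $e^{-(n+1)\r}$. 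But passing from growth/analyticity properties of $m(\l)^{-1}$ to polyhomogeneous kernel asymptotics at $\r=\infty$ is a genuine piece of analysis (Paley--Wiener/contour-shifting for the spherical transform), and you flag it yourself as the ``delicate point'' without carrying it out; the decay rate $n+1$, which distinguishes the inverse from $\calN_h$ itself (index $n$), is exactly what is at stake. Likewise, your assertion that $\kappa$ is conormal of order $+1$ at the diagonal ``since it inverts an elliptic operator of order $-1$'' is not a proof: that inverses of elliptic 0-operators have kernels lying in the (large) 0-calculus is precisely the kind of statement that requires the parametrix machinery or an explicit formula, and cannot be assumed.

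The paper closes this gap by using the \emph{explicit} inverse from \cite{MR1104811} rather than the abstract multiplier: $\calN_h^{-1}=C_n\,p(\Delta)S_n$ with $p(t)=-(t+n-1)$ and $S_n$ the convolution operator with the concrete radial kernel $\calS_n(r)$ of \eqref{kernel_cases}. One then splits $S_n=S_{n;1}+S_{n;2}$ by a cutoff in $\r$; the near-diagonal piece satisfies $p(\Delta)S_{n;1}\in\Psi_0^{1}(\o\B)$ since $p(\Delta)\in\Diff_0^2(\o\B)$ and the $O(r^{-n})$ singularity of $\calS_n$ gives $S_{n;1}\in\Psi_0^{-1}(\o\B)$, while for the far piece the key computation
\begin{equation}
	p(\Delta)\calS_n(r)=-(\Delta+n-1)\sinh^{-n}(r)\cosh(r)=-n\sinh^{-n-2}(r)\cosh(r)
\end{equation}
exhibits the $e^{-(n+1)r}$ decay directly, which Proposition \ref{prop:distance_function} ($\b_0^*\r=\a-\log x_\ell-\log x_r$) converts into vanishing of order $n+1$ at both side faces, so $p(\Delta)S_{n;2}\in\Psi_0^{-\infty,n+1,n+1}(\o\B)$. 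Note also that \cite{MR1104811} only states a \emph{left} inverse; the two-sidedness is obtained in the paper by the commutativity of radial convolution under the spherical Fourier transform (your multiplier picture would give this for free, but only once made rigorous). If you replace your multiplier program by this explicit-kernel computation, your outline becomes a complete proof along the paper's lines.
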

\begin{proof}
	For each $p\in \p M $, $N_p(\calN_g)=\calN_{h_p}$ on $(T_p^+M,h_p)$ by Proposition \ref{prop:normal_operator}, and $\calN_{h_p}$ can be identified with $\calN_h$ on $(\o{\B},h)$ as explained before.
	Thus by Proposition \ref{prop:pseudodifferential}, $\calN_{h}\in \Psi_0^{-1,n,n}(\o{\B})$ and the extension statement follows from Proposition \ref{prop:boundedness}.
	It was observed in \cite{MR1104811} that $\calN_{h}$ can be expressed as
	\begin{equation}\label{eq:convolution_R}
		\calN_{h}f(z)=\int_{\B}\calR\big(\r_{h}(z,\td{z}\,)\big)f(\td{z}\,)dV_{h}(\td{z}\,),\quad f\in C_c^\infty(\B),
	\end{equation}
	where $\r_{h}$ is the geodesic distance function with respect to the hyperbolic metric and
$   \calR(r)=2\sinh^{-n}(r).$
 \eqref{eq:convolution_R} can be interpreted as convolution by a locally integrable radial function on the homogeneous space 
$\mathbb{B}\cong G/H_o$, where $G=O^+(1,n+1)$ is the isometry group of $\B$ and $H_o\cong  O(n+1)$ is the isotropy group of the origin $o\in \mathbb{B}$.
  For $f\in C_c^\infty(\B)$
	\begin{align}
		\calN_{h}f(gH_o)=&\calR*f(gH_o)
		\label{eq:convolution_homog}=\int_{\B}\calR(\td{g}^{-1}gH_o)f(\td{g}H_o)dV_{h}(\td{g}H_o),\quad z=gH_o,\; \td{z}=\td{g}H_o,
	\end{align}
	where above and in what follows, slightly abusing notation, we identify radial functions and distributions on $\B$ with ones on $[0,\infty)$, writing for instance $\calR(z)=\calR(\r_{h}(z,o))$ for $z\in \mathbb{B}$.

	An exact left inverse for $\calN_{h}$ is computed in \cite[Theorems 4.2, 4.3, 4.4]{MR1104811}: one has
	 \begin{equation}
		C_np(\Delta)S_n \;\calN_{h}=Id\quad\text{ on }C_c^\infty(\B),
	\end{equation}
  where $\Delta$ denotes the hyperbolic Laplacian with principal symbol $-|\xi|_{h}^2$,
 $C_n$ is an explicit constant, $p(t)=-(t+n-1)$ and $S_n$ is given by convolution with the locally integrable radial kernel
	\begin{equation}\label{kernel_cases}
		\calS_n(r)=\begin{cases}\coth(r)-1,& n=1
	\\\sinh^{-n}(r)\cosh(r),& n\geq 2\end{cases};
	\end{equation}
	that is,
	\begin{equation}\label{eq:convolution}
		S_nf(z)=\calS_n *f(z)=\int_{\B}\calS_n\big(\r_{h}(z,\td{z}\,)\big)f(\td{z}\,)dV_{h}(\td{z}\,),\quad f\in C_c^\infty(\B).
	\end{equation}

	The fact that $C_np(\Delta)S_n$ is also a right inverse for $\calN_{h}$ follows by tracing through the proofs of Theorems 4.2-4.4 in \cite{MR1104811}. 
	The authors use the spherical Fourier transform of a radial distribution, given by
	$\widehat{f}(\l)=\int_{\B}f(\td{z}\,){\pointyphi}_{-\l}(\td{z}\,)dV_{h}(\td{z}\,)$ for $\l\in \R$, where $\pointyphi_\l$ is the radial eigenfunction of $\Delta$ with eigenvalue $-n^2/4-\l^2$ that satisfies $\pointyphi_\l(o)=1$.
	The spherical Fourier transform is well defined pointwise whenever $f(\td{z}){\pointyphi}_{-\l}(\td{z})$ is integrable; the reason why the formula corresponding to $n=1$ in \eqref{kernel_cases} differs from the one corresponding to $n\geq 2$ is exactly to ensure that $\widehat{\calS_1}$ is well defined.
	Their strategy is to show that $({C_n p(\Delta)\calS_n*\calR})\widehat{\phantom{\d}}=\widehat{\d}$, where $\d$ is the delta distribution at the origin.
	Thus the claim reduces to showing that $({\calR*C_n p(\Delta)\calS_n})\widehat{\phantom{\d}}=\widehat{\d}$.
	This in turn follows from the fact that for radial distributions $\calU$, $\calV$ one has $\widehat{\calU*\calV}(\l)=\widehat{\calU}(\l)\widehat{\calV}(\l)$, and also $\widehat{p(\Delta)\calU}(\l)=-p(-n^2/4-\l^2)\widehat{\calU}(\l)$, provided the expressions make sense.

	Now let $\phi(x)\in C_c^\infty([0,\infty))$ be identically 1 on $[0,1]$ and identically 0 on $[0,2]^c$ and let
	\begin{align}
	S_{n;1}f(z)=&\int_{\B}\phi(\r_{h}(z,\td{z}\,))\calS_n\big(\r_{h}(z,\td{z}\,)\big)f(\td{z}\,)dV_{h}(\td{z}\,)\\
	\text{ and }S_{n;2}f(z)=&\int_{\B}\big(1-\phi(\r_{h}(z,\td{z}\,))\big)\calS_n\big(\r_{h}(z,\td{z}\,)\big)f(\td{z}\,)dV_{h}(\td{z}\,)\text{ for } f\in C_c^\infty(\B^n),	
	\end{align}
	so that $S_n=S_{n;1}+S_{n;2}$.
	Using Proposition \ref{prop:distance_function} one sees that the Schwartz kernel of $S_{n;1}$ vanishes identically near the left and right faces of the 0-stretched product $(\o{\mathbb{B}})_0^2$; thus the $O(r^{-n})$ leading order singularity of $S_n(r)$ at $r=0$ implies that $S_{n;1}\in \Psi_0^{-1}(\o{\mathbb{B}})$ and hence $p(\Delta)S_{n;1}\in \Psi_0^{1}(\o{\mathbb{B}})$ since $p(\Delta)\in \Diff_0^2(\o{\mathbb{B}})$.

	The hyperbolic Laplacian acting on radial distributions is given in terms of geodesic polar coordinates by $\Delta=\p_r^2+n\coth(r)\p_r$ and one checks that for $n\geq 1$
	\begin{equation}\label{eq:lowering_order}
		p({\Delta})\calS_n(r)=-(\Delta+n-1)\sinh^{-n}(r)\cosh(r)=-n\sinh^{-n-2}(r)\cosh(r).	
	\end{equation}
		Since for $f\in C_c^\infty(\B)$
	\begin{align}\label{eq:kernel_of_S_2}
		p(\Delta)S_{n;2}f(z)=\int_{\B}p({\Delta})\big((1-\phi(r))\calS_n(r)\big)\big|_{r=\rho_{h}(z,\td{z})}f(\td{z})dV_{h}(\td{z}),
	\end{align}
	\eqref{eq:lowering_order} and Proposition \ref{prop:distance_function} imply that  $p(\Delta)S_{n;2}\in \Psi_0^{-\infty,n+1,n+1}(\o{\mathbb{B}})$.
	We conclude that $p(\Delta)S_n\in \Psi_0^{1,n+1,n+1}(\o{\mathbb{B}})$ for all $n\geq 1$
	and the spaces on which the inversion is valid follow again from Proposition \ref{prop:boundedness} by density.	
\end{proof}

\section{Parametrix construction and Stability Estimates}

\label{sec:parametrix}

We begin this section with the construction of a parametrix for $\calN_g$:

\begin{proposition}\label{prop:parametrix}
	Let $(\cM^{n+1},g)$ be a simple AH manifold.
	There exists an operator $B$ such that for $\d\in (-n/2,n/2)$ and  $s\geq 0$
	\begin{equation}\label{eq:boundedness_operators}
		B:x^{\d} H_0^{s+1}(M;\O_0^{1/2})\to x^\d H_0^{s}(M;\O_0^{1/2})
	\end{equation}
	is bounded and on $x^\d H_0^{s}(M;\O_0^{1/2})$ one has
\begin{equation}\label{eq:error_term}
	B \calN_g=Id-K, \quad K\in \Psi_0^{-\infty,\calF}(M),\quad F_f\geq 1, \quad F_\ell, F_r\geq n.
 \end{equation}
In particular, $ K:x^\d H_0^s(M;\O_0^{1/2})\to x^\d H_0^s(M;\O_0^{1/2})$ is compact for such $\d$ and $s$.
\end{proposition}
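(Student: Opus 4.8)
The plan is to construct $B$ in two stages: first remove the conormal singularity at the lifted diagonal $\Di_0$ using ellipticity, then correct the leading front-face behavior using the explicit model inverse of Proposition \ref{prop:normal_operator_space}. Since $\calN_g\in\Psi_0^{-1,n,n}(M)$ is elliptic (Proposition \ref{prop:pseudodifferential}), its principal symbol $\s_0^{-1}(\calN_g)(z,\xi)=C_n|\xi|_{g^{-1}}^{-1}$ is invertible on ${}^0T^*M\setminus 0$. Inverting it and summing asymptotically within the small calculus produces $G\in\Psi_0^{1}(M)$ with
\[
G\calN_g=Id-R_0,\qquad R_0\in\Psi_0^{-\infty,n,n}(M).
\]
Here the conormal singularity at $\Di_0$ cancels because $G$ inverts the full symbol, while the side-face order $n$ is inherited from $\calN_g$ through the composition rule (Proposition \ref{prop:composition}); the front-face index set of $R_0$ is $\o{\{(0,0)\}}$, so $R_0$ has a nontrivial model operator and is not yet compact. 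Boundedness of $G\colon x^\d H_0^{s+1}(M;\O_0^{1/2})\to x^\d H_0^{s}(M;\O_0^{1/2})$ for all $\d$ follows from Proposition \ref{prop:boundedness}.

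For the second stage I would compute the model operator $N_p(R_0)=Id-N_p(G)\calN_{h_p}$, a smoothing convolution operator on $T_p^+M$ lying in $\Psi_0^{-\infty,n,n}(\o{\B})$ under the identifications of Section \ref{ssec:model_operator_background}. Composing with the exact inverse $\calN_{h_p}^{-1}$, identified with $\calN_{h}^{-1}\in\Psi_0^{1,n+1,n+1}(\o{\B})$ of Proposition \ref{prop:normal_operator_space}, gives $T_p:=N_p(R_0)\calN_{h_p}^{-1}$; by Proposition \ref{prop:composition} this lies in $\Psi_0^{-\infty,n,n}(\o{\B})$ and remains smoothing, since composing a smoothing operator with $\calN_{h_p}^{-1}$ does not reintroduce a diagonal singularity. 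The kernels $\{F_p(T_p)\}_{p\in\p M}$ form a polyhomogeneous section over the front-face bundle, because $R_0$ is a fixed operator on $M$ and $\calN_{h_p}^{-1}$ is the same operator for every $p$ under the identification. Using the surjectivity of the model map in the short exact sequence \eqref{eq:short_exact}, I can then select a single operator $S\in\Psi_0^{-\infty,n,n}(M)$ with $F_p(S)=F_p(T_p)$, hence $N_p(S)=T_p$ for every $p$ by Lemma \ref{lm:agreement}, and set $B:=G+S$.

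It remains to identify the error. Writing $K:=Id-B\calN_g=R_0-S\calN_g$, the composition rule gives $S\calN_g\in\Psi_0^{-\infty,n,n}(M)$, so $K\in\Psi_0^{-\infty,\calF}(M)$ with $F_\ell,F_r\geq n$ and a priori $F_f=\o{\{(0,0)\}}$. The homomorphism property (Proposition \ref{prop:homomorphism}, applicable since $\Re(E_\ell+F_r)=2n>0$) yields $N_p(S\calN_g)=N_p(S)\calN_{h_p}=T_p\calN_{h_p}=N_p(R_0)$, whence $N_p(K)=N_p(R_0)-N_p(R_0)=0$ for all $p$. Vanishing of the model operator forces the leading $\o{\{(0,0)\}}$ term in the front-face expansion of $K$ to disappear, improving it to $F_f\geq 1$; this is exactly \eqref{eq:error_term}. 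Boundedness of $B$ on the stated spaces follows by applying Proposition \ref{prop:boundedness} to $G$ and to $S$ (the side-face conditions $\Re(E_\ell)=\Re(E_r)=n$ give the range $\d\in(-n/2,n/2)$), and compactness of $K\colon x^\d H_0^{s}\to x^\d H_0^{s}$ follows from the second part of Proposition \ref{prop:boundedness}, whose hypotheses $\Re(F_r)=n>n/2-\d$, $\Re(F_f)\geq 1>0$, $\Re(F_\ell)=n>\d+n/2$ hold precisely for $\d\in(-n/2,n/2)$.

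The main obstacle is the second stage: assembling a single, globally defined correction $S$ on $M$ out of the fiberwise inverses $\calN_{h_p}^{-1}$ with smooth dependence on $p$ and the correct index sets, and ensuring that $K$ is genuinely residual in the small calculus. The key structural fact making this work is that $N_p(R_0)$ is smoothing, so that composing it with the order-$(+1)$ operator $\calN_{h_p}^{-1}$ produces another smoothing operator rather than one carrying a diagonal singularity; together with the surjectivity of the model map \eqref{eq:short_exact}, this is what allows the leading front-face term to be cancelled. Everything else is index-set bookkeeping via Propositions \ref{prop:composition} and \ref{prop:homomorphism}.
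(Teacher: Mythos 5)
Your overall strategy is the same as the paper's: a small-calculus elliptic parametrix $G$ reducing the error to $R_0\in\Psi_0^{-\infty,n,n}(M)$, followed by a front-face correction built from the model inverse $\calN_{h_p}^{-1}$ and extended off the front face via the short exact sequence \eqref{eq:short_exact}. However, there is a genuine gap in the index-set bookkeeping, and it occurs exactly at the point where the paper inserts an extra step that your proposal lacks. Recall that in this paper $\Re(E)\geq C$ is defined so as to \emph{exclude} logarithmic terms at the threshold level $C$: it requires $E\cap(\{\Re z=C\}\times\{1,2,\dots\})=\emptyset$. Your correction $S$ has side-face index sets beginning at $n$; in fact Proposition \ref{prop:composition} applied to $T_p=N_p(R_0)\calN_{h_p}^{-1}$ does not even give $\Psi_0^{-\infty,n,n}(\o{\B})$ as you assert, but only side-face sets $\o{\{(n,0)\}}\cup\o{\{(n+1,1)\}}$, since extended unions create logarithms. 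When you then form $S\calN_g$ with $\calN_g\in\Psi_0^{-1,n,n}(M)$, the composition formula gives left index set $(n+0)\,\o{\cup}\,E_\ell(S)$, and since both index sets contain $(n,0)$, the extended union necessarily contains $(n,1)$. Hence your error term $K=R_0-S\calN_g$ is only known to have $F_\ell,F_r$ with real parts $\geq n$ but possibly carrying a log at level exactly $n$, which violates the statement $F_\ell,F_r\geq n$ under the paper's convention. This is not merely cosmetic: the proof of Proposition \ref{prop:polyhomogeneous} uses $p_0=0$ (no log in the leading term of $F_\ell$) to kill the order-$n$ term with a single factor of $(x\p_x-n)$ and to get a simple pole of the Mellin transform at $\z=-n$, and for $n=1$ a term $(1,1)$ would also obstruct the condition $E\geq 1$ needed to invoke the injectivity result of \cite{2017arXiv170905053G} in the proof of Theorem \ref{thm:main}.

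The missing idea is an improvement of the correction term \emph{before} extending it off the front face. Using the homomorphism property, your $T_p$ satisfies the identity
\begin{equation}
	T_p=N_p(Id-G\calN_g)\,\calN_{h_p}^{-1}=\calN_{h_p}^{-1}-N_p(G),
\end{equation}
and since $N_p(G)\in\Psi_0^{1}(\o{\B})$ (small calculus) while $\calN_{h_p}^{-1}\in\Psi_0^{1,n+1,n+1}(\o{\B})$, this places $T_p\in\Psi_0^{1,n+1,n+1}(\o{\B})$; intersecting with the smoothing information from the composition rule gives $T_p\in\Psi_0^{-\infty,n+1,n+1}(\o{\B})$ --- side-face order $n+1$, with no logarithms. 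Extending this kernel via \eqref{eq:short_exact} then yields $S\in\Psi_0^{-\infty,n+1,n+1}(M)$, and the composition $S\calN_g$ has side-face index sets $n\,\o{\cup}\,(n+1)=\o{\{(n,0)\}}\cup\o{\{(n+1,1)\}}$, which does satisfy $\geq n$ because the extended union can no longer produce a log at level $n$. This is precisely the paper's argument. With this one strengthening inserted, the rest of your proof (the cancellation of model operators forcing $F_f\geq 1$, and the boundedness and compactness conclusions via Proposition \ref{prop:boundedness}) goes through and coincides with the paper's.
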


\begin{proof}
We write $\calN_g=A_1+A_2$, where $A_1\in \Psi_0^{-1}(M)$, $A_2\in \Psi_0^{-\infty,n,n}(M)$.
By the ellipticity of $\calN_g$ (and hence of $A_1$),  \cite[Theorem 3.8]{MR1133743} implies the existence of $B_1\in \Psi_0^1(M)$ such that \begin{equation}
	B_1 A_1=Id-K_1,\quad K_1\in \Psi_0^{-\infty}(M).
\end{equation}
Note that $K_1$ is not compact on any weighted Sobolev space $x^\s H_0^s(M;\O_0^{1/2})$ since its kernel does not vanish at $\ff$.
Using Proposition \ref{prop:composition} we reach 
\begin{equation}
	B_1 \calN_g=Id-K_2, \quad K_2:=K_1-B_1A_2\in \Psi_0^{-\infty,n,n}(M). 
\end{equation}
We now improve the error term  to ensure that its kernel  vanishes at the front face.
For each $p\in \p M$ we have  $N_p(K_2)\in \Psi_0^{-\infty,n,n}(\o{\B})$, 
  under the identification of $ ({T_p^+M},h_p)$   with $(\B,h)$, according to the remarks following \eqref{eq:right_action};
 this identification depends smoothly on $p$.
Propositions \ref{prop:normal_operator_space} and \ref{prop:composition} imply that $N_p(K_2)\calN_{h}^{-1}=N_p(K_2)N_p(\calN_g)^{-1}\in \Psi_0^{-\infty,\calE}(\o{\B})$, $E_\ell,$ $E_r\geq n$, $\Re(E_f)\geq 0$.
In fact, we can obtain an improvement of the expansions:
to see this, use Propositions \ref{prop:homomorphism} and \ref{prop:normal_operator_space} to write 
$$N_p(K_2)N_p(\calN_g)^{-1}=N_p(Id-B_1\calN_g)N_p(\calN_g)^{-1}=N_p(\calN_g)^{-1}-N_p(B_1)\in \Psi_0^{1,n+1,n+1}(\o{\B}).$$
Thus $N_p(K_2)N_p(\calN_g)^{-1}\in \Psi_0^{-\infty,\calE}(\o{\B})\cap \Psi_0^{1,n+1,n+1}(\o{\B})\subset\Psi_0^{-\infty,n+1,n+1}(\o{\B})$.
Again using the identification $ ({T_p^+M},h_p)\leftrightarrow(\B,h) $, the convolution kernel of $N_p(K_2)N_p(\calN_g)^{-1}$ is a polyhomogeneous (in fact smooth) half density in  $\mathcal{A}_{phg}^{n+1,n+1}(\ff_*;{}^\ell\Omega_H^{1/2})\otimes \text{span}_{\C}\{\g_*\}$.
By \eqref{eq:short_exact}, it can be extended off of the front face smoothly  to produce an operator $B_2\in \Psi_0^{-\infty,n+1,n+1}(M)$ such that at each $p\in \p M$, $F_p(B_2)$ agrees with the convolution kernel of $N_p(K_2)N_p(\calN_g)^{-1}$.
By Lemma \ref{lm:agreement} and Proposition \ref{prop:homomorphism} this implies that $F_p(B_2\calN_g)=F_p(K_2)$.
Setting $B=B_1+B_2\in\Psi_0^{1,n+1,n+1}(M)$ and using Proposition  \ref{prop:composition} we find
\begin{align}
	B \calN_g=Id-K,\quad &K\in \Psi_0^{-\infty,\calF}(M),\\
	  F_f=\o{\{(1,0)\}}\cup \o{\{(2n+1,1)\}}, \quad &F_\ell=F_r=\o{\{(n,0)\}}\cup \o{\{(n+1,1)\}}.
 \end{align}

As already stated earlier, by Proposition \ref{prop:boundedness} one has that  for $s\geq 0$,
$\calN_g:x^\d H_0^s(M;\O_0^{1/2})\to x^{\d'} H_0^{s+1}(M;\O_0^{1/2})$ is bounded provided $\d>-n/2$, $\d'<n/2$ and $\d'\leq \d$.
Moreover,  
$B:x^{\d'} H_0^{s+1}(M;\O_0^{1/2})\to x^{\d''} H_0^{s}(M;\O_0^{1/2})$ is bounded provided $\d'>-n/2-1$, $\d''<n/2+1$ and $\d''\leq \d'$.
Hence choosing $\d=\d'=\d''\in (-n/2,n/2)$ we obtain \eqref{eq:boundedness_operators} and \eqref{eq:error_term}.
Moreover, for such choice of $\d$, choose $\td{\d}$ such that $\d<\td{\d}<\min\{n/2,\d+1\}$, and $\td{s}>s$ to guarantee that 
	$K:x^\d H_0^s(M;\O_0^{1/2})\to x^{\td{\d}}H_0^{\td{s}}(M;\O_0^{1/2})$
is bounded, implying that $K:x^\d H_0^s(M;\O_0^{1/2})\to x^{\d}H_0^s(M;\O_0^{1/2})$ is compact, as claimed.
\end{proof}

Proposition \ref{prop:parametrix} together with Proposition \ref{prop:boundedness} imply that for $\d\in (-n/2,n/2)$ 
\begin{equation}\label{eq:functions_in_nullspace}
	x^\d L^2(M;\O_0^{1/2})\cap \ker \calN_g\subset \bigcap_{m\in \R} x^\d H^m_0(M;\smsec)=:x^\d H_0^\infty(M;\smsec)\subset C^\infty(\cM;\smsec).
\end{equation}
We will now show that elements of $x^\d L^2(M;\O_0^{1/2})\cap \ker \calN_g$ also have polyhomogeneous expansions at the boundary. Henceforth we work with functions as opposed to half densities for convenience.
We start by showing tangential regularity.

\begin{lemma}\label{lm:conormal}
Let $u\in x^\d L^2(M,dV_g)\cap \ker \calN_g$, with $\d\in(-n/2,n/2)$. 
Then \begin{align}
  u\in x^{\d} H^\infty_b(M,dV_g):=\{&u\in x^\d L^2(M,dV_g):Pu\in x^\d L^2(M,dV_g), P\in \Diff_b^m(M), m \geq 0\};
\end{align}
here $\Diff_b^m(M)$ (by analogy with $\Diff_0^m(M)$) stands for differential operators consisting of finite sums of at most $m$-fold products of vector fields in $\calV_b(M)$.
\end{lemma}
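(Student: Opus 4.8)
The plan is to argue by induction on the b-order $m$, proving that $Pu \in x^\d H_0^\infty(M,dV_g)$ for every $P \in \Diff_b^m(M)$; since $\Diff_b^0(M)=C^\infty(M)$, the case $m=0$ is exactly the inclusion \eqref{eq:functions_in_nullspace}, and the conclusion $Pu \in x^\d H_0^\infty \subset x^\d L^2$ for all $m$ is the assertion of the lemma. The engine of the induction is the parametrix of Proposition \ref{prop:parametrix}: since $\calN_g u = 0$ and $B\calN_g = Id-K$, we have the identity $u = Ku$ with $K \in \Psi_0^{-\infty,\calF}(M)$, $F_f \geq 1$, $F_\ell, F_r \geq n$.

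First I would record three commutation facts about how $K$ interacts with b-operators, all of which follow from the projective-coordinate computations already used in the proof of Proposition \ref{prop:pseudodifferential}, together with the fact that a b-vector field on $M$, acting diagonally on $M^2$, is tangent to $\p\Di$ and hence lifts to a smooth b-vector field on $M_0^2$ tangent to all faces: (a) for $V \in \calV_b(M)$ the commutator $[V,K]$ again lies in $\Psi_0^{-\infty,\calF}(M)$ with the \emph{same} index sets (in particular $F_f \geq 1$ is preserved), because the diagonal lift preserves conormality and the boundary expansions; (b) one-sided composition $KV$ with a single b-vector field lowers the front-face order by at most one and leaves the side faces untouched, so $KV \in \Psi_0^{-\infty,\calF'}(M)$ with $F'_f \geq 0$ and $F'_\ell, F'_r \geq n$; and (c) the commutator $[M_a,K]$ with multiplication by $a \in C^\infty(M)$ has Schwartz kernel $(a(z)-a(\td z))\,\kappa_K$, which vanishes to one extra order at both $\Di_0$ and $\ff$, so it gains a power at the front face while preserving $F_\ell, F_r \geq n$.

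For the inductive step, assume $Qu \in x^\d H_0^\infty$ for all $Q \in \Diff_b^{m-1}(M)$ and take $P \in \Diff_b^m(M)$; it suffices to treat $P = a\,V P'$ with $a \in C^\infty(M)$, $V \in \calV_b(M)$ and $P' \in \Diff_b^{m-1}(M)$. Applying $P$ to $u = Ku$ and splitting off one commutator gives
\begin{equation}
	Pu = P(Ku) = K(Pu) + [P,K]u.
\end{equation}
The second term is harmless: expanding $[P,K]$ by the Leibniz rule and moving all b-vector fields to the right past $K$ using (a) (and absorbing, via (c), a single derivative into each coefficient-commutator factor so as to keep the front-face order nonnegative) exhibits $[P,K]$ as a finite sum $\sum_i A_i Q_i$ with $A_i \in \Psi_0^{-\infty}(M)$ of nonnegative front-face order and side-face orders $\geq n$, and $Q_i \in \Diff_b^{\le m-1}(M)$; by the inductive hypothesis $Q_i u \in x^\d H_0^\infty$, and by Proposition \ref{prop:boundedness} each $A_i$ preserves the weight $\d \in (-n/2,n/2)$, so $[P,K]u \in x^\d H_0^\infty$.

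The main obstacle is the first term $K(Pu)$: a priori $Pu$ lies in a space with up to $m$ fewer powers of $x$, and pushing all of $P$ onto $K$ from the right would lower $K$'s front-face order by $m$, which the bound $F_f \geq 1$ cannot absorb once $m \ge 2$. The resolution is to absorb only the single vector field $aV$: writing $K(Pu) = (KaV)(P'u)$, fact (b) gives $KaV \in \Psi_0^{-\infty}(M)$ with front-face order $\ge 0$ and side-face orders $\ge n$, while $P'u \in x^\d H_0^\infty$ by the inductive hypothesis, so Proposition \ref{prop:boundedness} yields $K(Pu) \in x^\d H_0^\infty$ with no loss of weight. Combining the two terms closes the induction and proves $u \in x^\d H_b^\infty(M,dV_g)$. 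The only point demanding genuine care is the bookkeeping of the front- and side-face index sets under these one-sided and commutator compositions with $\Diff_b(M)$, which I would establish once via the coordinates \eqref{eq:coord_lf}--\eqref{eq:coord_rf}.
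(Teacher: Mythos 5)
Your proposal is correct and follows essentially the same route as the paper's own proof: both run an induction on the b-order, powered by the parametrix identity $u=Ku$, the stability of the index family $\calF$ under commutators $[\,\calV_b(M),K\,]$, and the fact that a single one-sided composition with a b-vector field costs exactly one order at the front face, which $F_f\geq 1$ absorbs. The differences are purely organizational --- the paper absorbs the lone b-derivative from the left (via $VK$) and packages the induction as the identity $Pu=\sum_j Q_j^{(m)}P_j^{(m)}u$, whereas you absorb it from the right (via $KaV$) and split $Pu=K(Pu)+[P,K]u$; your fact (c) is even avoidable, since $aV\in\calV_b(M)$ lets coefficients be absorbed into the vector fields, as the paper implicitly does.
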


\begin{proof}
Smoothness of $u$ in $\cM$ was already remarked in \eqref{eq:functions_in_nullspace}.
We will show that for any $m\geq 0$, if it is the case that
$Pu\in x^\d L^2(M,dV_g)$ for every $P\in \Diff_b^m(M)$,  then $P'u\in x^\d L^2(M,dV_g)$ for every $P'\in \Diff^{m+1}_b(M)$.
Since $u\in x^\d L^2(M,dV_g)$ by assumption, this suffices to prove the lemma.
We will first prove an auxiliary fact, namely that
if  $P\in \Diff_b^{m+1}(M)$, $m\geq 0$, then $Pu$ can be written as a finite sum
\begin{equation}\label{eq:conormal_induction}
  Pu=\sum_j Q_j^{(m)}P_j^{(m)}u,\quad Q_j^{(m)}\in \Psi_0^{-\infty,F_\ell,F_r,F_f'}(M)\text{ and }P_j^{(m)}\in \Diff_b^{m}(M),
\end{equation}
where $F_f'=F_f-1\geq 0$ and $F_f,$ $F_\ell$, $F_r$ are as in Proposition \ref{prop:parametrix}.
Once it has been established, \eqref{eq:conormal_induction} will immediately imply the claim
using Proposition \ref{prop:boundedness}.

We will show \eqref{eq:conormal_induction} via an inductive argument.
For the case $m=0$, first observe that a vector field $V\in \calV_b(M)$ lifts from the left factor of $M^2$ to a vector field on $M_0^2$ of the form $x_f^{-1}\td{V}$, where $\td{V}$ is tangent to all boundary faces of $M_0^2$. 
Thus by Proposition \ref{prop:parametrix} we have $Vu=VKu$ with $V  K\in \Psi_0^{-\infty,F_\ell,F_r,F_f'}(M)$.
Hence $V u\in x^{\d}L^2(M,dV_g)$ by Proposition \ref{prop:boundedness}, which shows \eqref{eq:conormal_induction} for $m=0$.

Now suppose that \eqref{eq:conormal_induction} holds for some fixed $m\geq 0$; we will show that it also holds for $m+1$.
Any operator in $\Diff_b^{m+2}(M)$ can be written as a finite sum of the form $\sum _jV_jP_j$, where $V_j\in \calV_b(M)$ and $P_j\in \Diff_b^{m+1}(M)$.
Thus it suffices to differentiate \eqref{eq:conormal_induction} by $V\in \calV_b(M)$ and show that it has the required form.
We find 
\begin{equation}
	VPu=\sum_j VQ_j^{(m)}P_j^{(m)}u=\sum_j\big(Q_j^{(m)}VP_j^{(m)}u-[Q_j^{(m)},V]P_j^{(m)}u\big).
\end{equation}
By Proposition 3.30 in \cite{MR1133743}, $[Q,V]\in \Psi_0^{-\infty,\calE}(M)$ for $Q\in\Psi_0^{-\infty,\calE}(M)$ and $V\in \calV_b(M).$
Thus  $[Q_j^{(m)},V]\in \Psi_0^{-\infty,F_\ell,F_r,F_f'}(M)$ for all $j$.
Since $VP_j^{(m)},$ $P_j^{(m)}\in \Diff_b^{m+1}(M)$ we obtain \eqref{eq:conormal_induction} for $m+1$, finishing the proof.
\end{proof}

In Proposition \ref{prop:polyhomogeneous} below we will use the Mellin transform to show the existence of polyhomogeneous expansion at the boundary for elements in the nullspace of $\calN_g.$
We briefly recall its definition and main properties.
Below we write $\R^+=(0,\infty)$.
\begin{definition}
	If $f\in C^\infty_c(\R^+)$ and $\z\in \C$ we define the Mellin Transform of $f$ by
	\begin{equation}
		 f _{\calM}(\z)=\int_0^\infty x^\z f(x) \frac{dx}{x}.
	\end{equation}
\end{definition}
By the fact that $ f_\calM(\z)=\calF(f(\exp({\,\cdot\,})))(i\z)$ for $\z$ imaginary, we see that for $f\in C^\infty_c(\R^+)$, $f_{\calM}$ is rapidly decaying along each line $\zeta=\a+i\eta$, as $\R\ni \eta\to\pm\infty$ where $\a\in \R$ is constant.
Moreover, 
the Mellin transform induces an isomorphism
\begin{equation}\label{eq:mellin_isomorphism}
	\calM : x^\d L^2\big(\R^+, \frac{dx}{x}\big)\to L^2(\{\Re(\z)=-\Re(\d)\},|d\zeta|)
\end{equation}
with inverse given by 
\begin{equation}\label{eq:Mellin_inverse}
	u(x)=\frac{1}{2\pi }\int_{\Re(\z)=-\Re(\d) }x^{-\z}u_\calM(\z)|d\zeta|.
\end{equation}
By the Paley-Wiener Theorem, if $u\in x^\d L^2\big(\R^+, \frac{dx}{x}\big)$ and $\supp u\subset [0,1)$ then $u_\calM$ extends to a holomorphic function on the half plane $\{\Re(\z)>-\Re(\d)\}$, uniformly in $L^2(\{\Re(\z)=\a\},|d\zeta|)$ for $\a\geq-\Re(\d)$. 
By analogy with the Fourier transform, we also have $(x\p_x u)_{\calM}(\z)=-\z u_{\calM}(\z)$ on the half plane $\{\Re(\z)\geq-\Re(\d)\}$ provided $\supp u\subset [0,1)$ and $u,x\p_xu \in x^\d L^2(\R^+,\frac{dx}{x})$.
Moreover, if $\phi\in C_c^\infty\big([0,\infty)\big)$ is identically 1 near $0$ then $(x^\d|\log(x)|^k\phi)_{\calM}(\z)$ is holomorphic on the half plane $\{\Re(\z)>-\Re(\d)\}$ for $k$ non-negative integer, and using an integration by parts one sees that it extends meromorphically on $\C$, with a pole of order $k+1$ at $\z=-\d$.
If $M$ is a compact manifold with boundary one can use a product decomposition $[0,\e)_x\times \p M$ of a collar neighborhood of $\p M$ and compute the Mellin transform in the $x$ variable for polyhomogeneous conormal functions supported near $\p M$.
If $\phi\in C^\infty(M)$ is supported near $\p M$ and $u\in \calA_{phg}^E(M)$,
 then $(u\phi(x))_\calM$ is meromorphic on $\C$ with poles of order $p+1$ at $\z=-s-\ell$ and values in $C^\infty(\p M)$ for each $(s,p)\in E$ and for $ \ell\in \mathbb{N}_{ 0}=\{0,1,\dots\}$. 
The fact that the space $\calA_{phg}^E(M)$ is invariantly defined, as already remarked earlier, implies that the analyticity properties of $(\phi u)_\calM$ are invariantly defined.

Before we show the existence of an asymptotic expansion for elements in the nullspace of $\calN_g$ we show a lemma about index sets.
\begin{lemma}\label{lm:index_sets}
Let $E_1$, $E_2$, $F\subset \C\times \mathbb{N}_0$ be index sets. 
Then $(E_1\o{\cup}E_2)+F\subset (E_1+F)\o{\cup}(E_2+F)$.
 
\end{lemma}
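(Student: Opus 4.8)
The plan is to argue by a direct element-chase, since both sides are concrete subsets of $\C\times\mathbb{N}_0$. I would take an arbitrary $\omega\in(E_1\o{\cup}E_2)+F$ and, by the definition of the sum of index sets, write $\omega=\alpha+\phi$ with $\alpha\in E_1\o{\cup}E_2$ and $\phi=(s_F,p_F)\in F$. By the definition of the extended union, $\alpha$ falls into exactly one of three cases: either $\alpha\in E_1$, or $\alpha\in E_2$, or $\alpha=(s,p_1+p_2+1)$ for some $(s,p_1)\in E_1$ and $(s,p_2)\in E_2$ sharing the first coordinate $s$.

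The first two cases are immediate: if $\alpha\in E_i$ then $\omega=\alpha+\phi\in E_i+F\subset (E_1+F)\o{\cup}(E_2+F)$. All of the content lies in the third case, where $\omega=(s+s_F,\,p_1+p_2+p_F+1)$. Here I would manufacture the two elements $(s+s_F,p_1+p_F)\in E_1+F$ and $(s+s_F,p_2+p_F)\in E_2+F$, which share the first coordinate $s+s_F$. The definition of the extended union then places $(s+s_F,\,(p_1+p_F)+(p_2+p_F)+1)=(s+s_F,\,p_1+p_2+2p_F+1)$ into $(E_1+F)\o{\cup}(E_2+F)$.

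The hard part — really the only subtlety — is that this manufactured element carries logarithmic order $p_1+p_2+2p_F+1$, which overshoots the order $p_1+p_2+p_F+1$ of the target $\omega$ by exactly $p_F$. To close this gap I would first note that the sum and the extended union of index sets are again index sets: the sum because downward shifts and translations split coordinatewise (routine from \eqref{eq:index_sets_assumptions} applied to each factor), and the extended union by a short check that the adjoined logarithmic terms respect \eqref{eq:index_sets_assumptions}. Consequently $(E_1+F)\o{\cup}(E_2+F)$ enjoys the downward-closure property \eqref{eq:index_sets_assumptions} in its integer coordinate, and applying it with $\ell=p_F$ (legitimate since $0\le p_F\le p_1+p_2+2p_F+1$) lowers the order to $p_1+p_2+p_F+1$ while fixing the first coordinate, producing precisely $\omega\in(E_1+F)\o{\cup}(E_2+F)$ and finishing the inclusion.
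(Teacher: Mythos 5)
Your proof is correct and follows essentially the same route as the paper's: handle the elements of $E_1\cup E_2$ directly, and for an adjoined element $(s,p_1+p_2+1)$ form $(s+\td{s},(p_1+\td{p})+(p_2+\td{p})+1)\in (E_1+F)\,\o{\cup}\,(E_2+F)$ and then lower the logarithmic order by $\td{p}$ via the downward-closure property \eqref{eq:index_sets_assumptions}. The only difference is cosmetic: you spell out that the sum and extended union of index sets again satisfy \eqref{eq:index_sets_assumptions}, a point the paper uses implicitly when it invokes that property at the final step.
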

\begin{proof}
First note that $(E_1\cup E_2)+F=(E_1+F){\cup}(E_2+F)\subset (E_1+F)\o{\cup}(E_2+F)$. 
Now suppose that $(s,p_1+p_2+1)\in E_1\o{\cup}E_2$, where $(s,p_1)\in E_1$ and $(s,p_2)\in E_2$ and let $(\td{s},\td{p})\in F$. 
Then $(s+\td{s},(p_1+\td{p})+(p_2+\td{p})+1)\in (E_1+F)\o{\cup}(E_2+F)$, so it is also the case that $(s,p_1+p_2+1)+(\td{s},\td{p})=(s+\td{s},p_1+p_2+\td{p}+1)\in (E_1+F)\o{\cup}(E_2+F)$ by \eqref{eq:index_sets_assumptions} and we have shown the claim.
\end{proof}

\begin{remark}
	In general one does not have $(E_1\o{\cup}E_2)+F=(E_1+F)\o{\cup}(E_2+F)$. For instance consider the index sets $E_1=\o{\{(1,10)\}}$, $E_2=\o{\{(1/2,0)\}}$ and $F=\o{\{(1/2,5),(0,0)\}}$. 
	Then $(1,16)\in (E_1+F)\o{\cup}(E_2+F)\setminus \big((E_1\o{\cup}E_2)+F\big)$.
\end{remark}

\begin{proposition}\label{prop:polyhomogeneous}
	Let $u\in x^\d L^2(M,dV_g)\cap \ker \calN_g$, with $\d\in (-n/2,n/2)$.
	Then $u\in \calA_{phg}^E(M)$ with $E=\o{\bigcup}_{j\geq 0}(F_\ell+jF_f)$, where $F_\ell$, $F_f$ are the index sets in \eqref{eq:error_term} and $jF_f=\sum_{i=1}^jF_f$.
	Note that $F_\ell+jF_f\geq n+j$ and hence $E$ is an index set.
\end{proposition}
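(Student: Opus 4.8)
The plan is to run the parametrix identity of Proposition \ref{prop:parametrix} in reverse. Since $\calN_g u=0$, applying $B$ gives $(Id-K)u=0$, that is $u=Ku$ with $K\in\Psi_0^{-\infty,\calF}(M)$, $F_f\geq 1$, $F_\ell,F_r\geq n$ as in \eqref{eq:error_term}. By \eqref{eq:functions_in_nullspace} the function $u$ is smooth in $\cM$, and by Lemma \ref{lm:conormal} it is conormal, $u\in x^\d H_b^\infty(M,dV_g)$; hence the whole question is the behavior of $u$ at $\pM$, and I would localize in a collar $[0,\e)_x\times\pM$, the interior being already smooth.

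The crux, and the main obstacle, is to upgrade conormality to a genuine polyhomogeneous expansion, for which I would use the Mellin transform in $x$ together with the identity $u=Ku$. Conormality and the Paley--Wiener theorem (see \eqref{eq:mellin_isomorphism} and the surrounding discussion) show that the Mellin transform $u_\calM(\z,\cdot)$ of $u$ in $x$ is holomorphic in the half-plane $\{\Re(\z)>-(\d+n/2)\}$, takes values in $C^\infty(\pM)$, and decays rapidly on vertical lines, the decay following by applying $(x\p_x u)_\calM(\z)=-\z\,u_\calM(\z)$ repeatedly. The key point is that $u=Ku$ forces $u_\calM$ to continue meromorphically to the left of this half-plane: feeding the coordinate formula \eqref{eq:kernel_coordinates} for $K$ into $(Ku)_\calM(\z,\cdot)=\int_0^\infty x^\z(Ku)(x,\cdot)\,\frac{dx}{x}$ and using the polyhomogeneous expansions of $\b_0^*\k_K$ at the left face $\lf$ (index set $F_\ell$) and at the front face $\ff$ (index set $F_f$), the resulting integral is a meromorphic function of $\z$ whose poles sit at $\z=-s$ for $(s,\cdot)\in F_\ell$ together with their $F_f$-shifts. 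As every such pole lies to the left of the contour $\Re(\z)=-(\d+n/2)$ (since $F_\ell\geq n$ while $\d+n/2<n$), shifting the inversion contour \eqref{eq:Mellin_inverse} leftward and summing residues---each pole of order $p+1$ contributing a term $x^{s}|\log x|^{p}a(y)$ with $a\in C^\infty(\pM)$---produces an asymptotic expansion and shows $u\in\calA_{phg}^G(M)$ for some index set $G$ with $\Re(G)>0$. Making this rigorous---bounding $(Ku)_\calM$ uniformly on vertical lines so the shifted integral converges, and verifying that only finitely many poles are crossed in each strip---is the technical heart of the argument.

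Once polyhomogeneity is known I would sharpen the index set by iterating $u=Ku$. Identifying functions with half-densities via $dV_g^{1/2}$, Proposition \ref{prop:polyhomogeneous_mapping} applied to $K\in\Psi_0^{-\infty,F_\ell,F_r,F_f}(M)$---whose hypothesis $\Re(F_r+G)>n$ holds because $\Re(F_r)\geq n$ and $\Re(G)>0$---yields $u=Ku\in\calA_{phg}^{TG}$ with $TG:=F_\ell\,\o{\cup}\,(F_f+G)$. Since $\Re(TG)\geq\min(n,1+\Re(G))>0$ the hypothesis persists, so by induction $u\in\calA_{phg}^{T^kG}$ for every $k\geq 0$. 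Repeated application of Lemma \ref{lm:index_sets} gives $T^kG\subset\big(\o{\bigcup}_{j=0}^{k-1}(F_\ell+jF_f)\big)\,\o{\cup}\,(kF_f+G)$; because $F_f\geq1$ the real parts of the summand $kF_f+G$ tend to $+\infty$, so for each $C$ and all large $k$ one has $T^kG\cap\{\Re(s)\leq C\}\subset E\cap\{\Re(s)\leq C\}$ with $E=\o{\bigcup}_{j\geq0}(F_\ell+jF_f)$. As the polyhomogeneous expansion of $u$ is unique, all of its exponents with $\Re(s)\leq C$ lie in $E$ for every $C$, whence $u\in\calA_{phg}^E(M)$. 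Finally $F_\ell+jF_f\geq n+j$ since $F_\ell\geq n$ and $F_f\geq1$, so $\Re(s)\to\infty$ along $E$ and $E$ is a legitimate index set, completing the proof.
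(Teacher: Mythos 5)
Your overall frame --- iterate $u=Ku$ and analyze the Mellin transform --- is the same as the paper's, and your second stage is correct: once one knows $u\in\calA_{phg}^G(M)$ for \emph{some} index set $G$ with $\Re(G)>0$, iterating Proposition \ref{prop:polyhomogeneous_mapping} (the hypothesis $\Re(F_r+G)>n$ does hold) together with Lemma \ref{lm:index_sets} and uniqueness of asymptotic expansions pins the index set down to $E$; this is a clean alternative to the way the paper tracks index sets inside its induction. The problem is the first stage, which you yourself label ``the technical heart'' and then do not carry out. The claim that $(Ku)_\calM$ continues meromorphically with poles at $-F_\ell$ ``together with their $F_f$-shifts'' is asserted, not derived, and as a one-shot claim it is circular: in the pushforward, the front-face part of $\b_0^*\k_K$ multiplies the asymptotics of $u$ itself (this is exactly the term $E_f+F$ in Proposition \ref{prop:polyhomogeneous_mapping}), so the $F_f$-shifted poles only appear after the corresponding terms in the expansion of $u$ are already known --- which is what you are trying to prove. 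There is also a geometric obstruction to ``feeding \eqref{eq:kernel_coordinates} into the Mellin integral'': near $\lf\cap\ff$ the expansion of the lifted kernel at the left face is an expansion in the projective variable $s=x/\tx$ of \eqref{eq:coord_lf}, with coefficients living on the blown-up space, not an expansion in powers of $x$ with coefficients in $(\tx,\ty)$; turning this into meromorphy of $(Ku)_\calM$ with uniform bounds on vertical lines amounts to proving a pushforward theorem with conormal remainder, which you neither cite nor prove.

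The paper supplies precisely the missing device. It first boosts the weight, via $u=Ku$ and Proposition \ref{prop:boundedness}, to $u\in x^{\t}H_b^\infty(M,d\mu_b)$ for every $\t<n$, so $(\phi u)_\calM$ is holomorphic in $\{\Re\z>-n\}$. Then, at stage $m$ of the induction, it applies the 0-differential operators $P_j=(x\p_x-n-j)$ to the localized error: $K_m:=\prod_{j=0}^m P_j^{p_j+1}(\phi K)$ has left-face index set $F_\ell^{m+1}\geq n+m+1-\e$, because the $P_j$ kill the leading left-face terms of the kernel. Writing $u=u_m+v$ (partial expansion plus conormal remainder of weight $<n+m$), the identity $\prod_{j=0}^m P_j^{p_j+1}(\phi u)=K_mu_m+K_mv$ splits the problem: $K_mu_m$ is polyhomogeneous by Proposition \ref{prop:polyhomogeneous_mapping} --- this is where the $F_f$-shifts legitimately enter, acting on the already-established terms $u_m$ rather than on the unknown $u$ --- while $K_mv$ has weight $<n+m+1$ by the commutator argument of Lemma \ref{lm:conormal}, hence holomorphic Mellin transform in $\{\Re\z>-n-m-1\}$. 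Since the left-hand side's Mellin transform equals $\prod_{j=0}^m(-\z-n-j)^{p_j+1}(\phi u)_\calM$, dividing gives the meromorphic continuation across the next strip with poles of the correct order, and a contour shift in \eqref{eq:Mellin_inverse} produces the next terms of the expansion. Without this mechanism (or an equivalent functional-equation setup), your argument never produces the initial index set $G$; with it, your index-set bookkeeping would indeed finish the proof.
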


\begin{proof}
By \eqref{eq:functions_in_nullspace}, any $u$ as in the statement is smooth in $\cM$,
hence it suffices to show the existence of an asymptotic expansion at the boundary for $u$.
We first show that if $u\in x^\d L^2(M,dV_g)\cap \ker \calN_g$  for {some} $\d\in (-n/2,n/2)$ then $u\in x^{\d'} H_0^\infty(M,dV_g)$ for {all} $\d'<n/2$.
Since $u=Ku$, the mapping properties of $K$ (by \eqref{eq:error_term} and Proposition \ref{prop:boundedness})
 imply that $u\in x^{\d_1}H_0^\infty(M,dV_g)$ provided $\d_1<n/2$, $\d_1\leq\d+1$, that is, the existence of a parametrix allows us to obtain an improvement in the decay of $u$.
 Using $j$ times the improved decay and $u=Ku$, we inductively find that $u\in x^{\d_j}H_0^\infty(M,dV_g)$, provided $\d_j<n/2$ and $\d_j\leq \d+j$, so taking $j$ sufficiently large we conclude that $u\in x^{\d'} H_0^\infty(M,dV_g)$ for $\d'<n/2$.
 By Lemma \ref{lm:conormal}, $u\in x^{\d'}H_b^\infty(M,dV_g)$ for $\d'<n/2$, hence $u\in x^{\t}H_b^\infty(M,d\mu_b)$ for $\t<n$; the latter is the same space as in the statement of Lemma \ref{lm:conormal} with $dV_g$ replaced by $d\mu_b$, a measure on $M$ induced by a section of $\Omega_b(M)$, of the form ${x^{-1}|dxdy|}$ locally near $\p M$.
 (In this proof we use this measure due to its more natural behavior with respect to the Mellin transform.)

Functions in
$x^\t H_b^\infty(M,d\mu_b)$ supported near $\p M$ can be identified with functions which lie in $x^\t H^k_b(dx/x;H^\ell(\pM))$ for all $k,\ell\in \mathbb{N}_0$; 
 the latter is the space of $v:\R^+\to H^\ell(\p M)$ which are $k$ times Fr\'echet differentiable almost everywhere on $\R^+$ (and supported near 0), and
$ 	\|x^{-\t}(x\p_x)^jv\|_{H^\ell(\p M)}\in L^2(dx/x)$ for $j=0,\dots,k$.
Therefore, if $\phi\in C_c^\infty(M)$ is supported in a  small neighborhood of $\p M$ and identically 1 near $\p M$, by taking the Mellin transform in $x$ we find that $(\phi u)_\calM(\z)$ is holomorphic in the half plane $\{\Re(\z)>-\t\}$, with values in functions smooth in $y$  {and with the $L^2(\{\Re(\z)=\a\},|d\zeta|)$ norm of $\|(\phi u)_\calM\|_{H^\ell(\p M)}$ being uniformly bounded  for $\a\geq -\t$} for each $\ell$.

We now recover the leading order term in the expansion of $u$ at $\p M$.
Observe that localizing $K$ near the boundary from the left does not alter its index sets: that is,  if $\phi$ is smooth and supported near $\p M$ with $\phi\equiv 1$ near $\p M$, then $\phi K\in \Psi_0^{-\infty,\calF}(M)$, with $\calF=(F_\ell,F_r,F_f)$ as in \eqref{eq:error_term}.
Recall from the proof of Proposition \ref{prop:parametrix} that $F_\ell=\bigcup_{j\geq 0}\o{\{(n+j,p_j)\}}$ with $p_0=0$; denote  $F_\ell^k=\bigcup_{j\geq k}\o{\{(n+j,p_j)\}}$ for $k\in \mathbb{N}_0$.
Now let $P_0=(x\p_x-n)\in \Diff_0^1(M)$;
 $P_0 $ lifts to $M_0^2$ to a $C^\infty $ vector field that takes the form $(s\p_s-n)$ near $\lf$ in terms of coordinates \eqref{eq:coord_lf}.
Then
$K_0:=P_0(\phi K)\in \Psi_0^{-\infty,F_\ell^1,F_r,F_f}(M)$, that is, the term of order $n$ in the expansion of $\phi K$ at the left face of $M_0^2$ is removed.
We can now see that $K_0u\in x^{\t}H_b^\infty(M;d\mu_b)$ for $\t<n+1$:
indeed, since $u\in x^{\t'}H_b^\infty(M;d\mu_b)$ for $\t'<n$, $Pu\in  x^{\t'}L^2(M;d\mu_b)$ for $P\in \Diff_b^m(M)$, $m\geq 0$.
By an inductive argument using commutators as in Lemma \ref{lm:conormal}, one sees that if $P'\in \Diff_b^{m}(M)$, $m\geq 0$, then $P'K_0u=\sum_{j=1}^{J}Q_jP_ju$, where $P_j\in \Diff_b^m(M)$ and $Q_j\in \Psi_0^{-\infty,F_\ell^1,F_r,F_f}(M)$.
Thus Proposition \ref{prop:boundedness}, $F_\ell^1\geq n+1-\e$ for all $\e>0$, and $F_f\geq 1$  imply that if $\t<n+1$ then for $P'\in \Diff_b^{m+1}(M)$ we have $ P'K_0 u\in x^\t L^2(M;d\mu_b)$   for all $m\geq 0$. In other words, $K_0u\in x^{\t}H_b^\infty(M;d\mu_b) $ for such $\t$, as claimed.
This implies that $(K_0 u)_{\calM}(\z)$ is holomorphic on the half plane $\{\Re(\z)>-n-1\}$ with values in functions smooth in $y$.
Since $(P_0(\phi u))_\calM=(-\z-n)(\phi u)_\calM$, $(\phi u)_\calM=(-\z-n)^{-1}(K_0 u)_\calM$ and   we conclude that $(\phi u)_\calM$ extends  meromorphically on the half plane $\{\Re(\z)>-n-1\}$,  with a pole of order $1$ at $\z=-n$ and values in smooth functions on $\pM$.
Computing the inverse Mellin transform on the line $\{\Re(\z)=-n-1+\e\}$, where $\e>0$ is small (note that on such a line $(-\z-n)^{-1}(K_0 u)_\calM$ depends smoothly on $y$ and is in $L^2(\{\Re(\z)=-n-1+\e\};|d\zeta|)$ for each $y$), we recover the leading term of the expansion of $u$: near $\p M$
	\begin{equation}
  u(x,y)=a_{00}(y)x^{n}+v,\quad a_{00}\in C^\infty(\pM),\: v\in x^{\t} H_b^\infty(M,d\mu_b), \:\t<n+1.  
  \end{equation}

Now suppose that for $m\geq1$ we have recovered the asymptotic expansion of $u$ up to the  $m$-th exponent appearing  in the index set $E=\o{\bigcup}_{j\geq 0}(F_\ell+jF_f)$ and corresponding to powers of $x$. It  will be convenient to write  $E=\bigcup_{j\geq 0}\o{\{(n+j,r_j)\}}$, where $r_j$ is the highest power of a logarithmic factor multiplying $x^{n+j}$ in the expansion induced by $E$. Similarly, write $F_\ell+mF_f=\bigcup_{j\geq m}\o{\{(n+j,r_{j;m})\}}$, so that  $r_j+1=\sum_{m=0}^j(r_{j;m}+1)$.
Note that $r_{j;0}=p_j$ and also $r_0=r_{0;0}=p_0=0$. 
So suppose that we have
\begin{equation}
  \label{eq:expansion}
\begin{aligned}
	u=&u_m+v \text{ where }v\in x^{\t} H_b^\infty(M,d\mu_b),\quad \t<n+m,\\
					&\text{ and  near }\p M \quad u_m(x,y)=\sum_{j=0}^{m-1}\sum_{k=0}^{r_j}a_{jk}(y)x^{n+j}|\log x|^k,\quad a_{jk}\in C^\infty(\pM).
\end{aligned}
\end{equation}
We will show that \eqref{eq:expansion} holds for $m+1$; then by induction we will be done.

By \eqref{eq:expansion}, $(\phi u)_\calM$ is meromorphic on the half plane $\{\Re(\z)>-n-m\}$ with poles of order $r_j+1$ at $\z=-n-j$ for $0\leq j\leq m-1$.
Set $P_j=(x\p_x-n-j)\in \Diff_0^1(M)$ and write $K_m:=\prod_{j=0}^{m}P_j^{p_j+1}(\phi K)$; then $K_m\in \Psi_0^{-\infty, F_\ell^{m+1}\!,F_r,F_f}(M)$ as before.
Now by \eqref{eq:expansion}
\begin{equation}\label{eq:mth_term}%
	\prod_{j=0}^{m}P_j^{p_j+1}(\phi u) = K_mu_m+K_mv.
\end{equation}
Since $K_m\in \Psi_0^{-\infty, F_\ell^{m+1}\!,F_r,F_f}(M)$ with $F_f\geq 1$ and $F_\ell^{m+1}\geq n+m+1-\e$ for all $\e>0$, the fact that $v\in x^{\t} H_b^\infty(M,d\mu_b),$ for $ \t<n+m $ implies that $K_mv\in x^\t H_b^\infty(M,d\mu_b)$ for $\t<n+m+1$ using the same commutator argument as before and Proposition \ref{prop:boundedness}. %
Moreover, it follows by Proposition \ref{prop:polyhomogeneous_mapping} that $K_mu_m\in \calA_{phg}^G(M)$, where 
\begin{equation}\label{eq:inclusion}
	G=F_\ell^{m+1}\o{\cup}\Big(\o{\bigcup}_{j=0}^{m-1}(F_\ell+jF_f)+F_f\Big){\subset} F_\ell^{m+1}\o{\cup}\Big(\o{\bigcup}_{k=1}^{m}(F_\ell+kF_f)\Big)=:G',
\end{equation}
where the inclusion follows from Lemma \ref{lm:index_sets}.
Thus $K_mu_m\in \calA_{phg}^{G'}(M)$.
Upon  taking the Mellin transform in \eqref{eq:mth_term},
\begin{equation}\label{eq:induictive_step}
	\prod_{j=0}^{m}(-\z-n-j)^{p_j+1}(\phi u)_\calM(\z)=(K_mu_m)_\calM(\z)+(K_mv)_\calM(\z),
\end{equation}
where $(K_mv)_\calM(\z)$ is holomorphic in $\{\Re(\z)>-n-m-1\}$ (with values in $C^\infty(\p M)$).
On the other hand, for $1\leq j\leq m$, $(K_mu_m)_\calM(\z)$ has a pole of order $\sum_{k=1}^j(r_{j;k}+1)$ at $\z=-n-j$.
Note that the index set $F_\ell^{m+1}$ in \eqref{eq:inclusion} does not contribute any poles in the open half plane $\{\Re(\z)>-n-m-1\}$.
Thus upon dividing we find that $(\phi u)_\calM(\z)$ is meromorphic on the half plane $\{\Re(\z)>-n-m-1\}$ with values in $C^\infty(\p M)$ and poles of order $p_j+1+\sum_{k=1}^j(r_{j;k}+1)=(r_{j;0}+1)+\sum_{k=1}^j(r_{j;k}+1)=r_j+1$ at $\z=-n-j$, $0\leq j\leq m$.
Taking the inverse Mellin transform of \eqref{eq:induictive_step} on a vertical line $\{\Re(\z)=-n-m-1+\e\}$ for small $\e>0$ similarly to the first inductive step  we obtain \eqref{eq:expansion} for $m+1$.
\end{proof}

\begin{remark}\label{rem:sharpness}
It follows from \eqref{eq:inclusion} that the index set $E$ in the statement of Proposition \eqref{prop:polyhomogeneous} includes higher powers of logarithmic factors than it needs to, but its form suffices for our needs.
\end{remark}

We will need the following standard result from functional analysis (see \cite{MR2068966} for a proof):
\begin{lemma}
\label{lm:functional}
Let $X$, $Y$, $Z$ be Banach spaces, and let $A:X\to Y$ be bounded and injective.
If there exists a compact operator $K:X\to Z$  such that
\begin{equation}
	\|u\|_X\leq C\left(\|Au\|_Y+\|Ku\|_Z\right), \quad u\in X
\end{equation}
for some constant $C$, then there exists a constant $C'$ such that
\begin{equation}
	\|u\|_X\leq C'\|Au\|_Y, \quad u\in X.
\end{equation}
\end{lemma}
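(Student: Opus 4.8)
The plan is to argue by contradiction, using the compactness of $K$ to absorb the lower-order term and thereby upgrade the given estimate into the desired one. Suppose the conclusion fails; then for every $C'>0$ the estimate $\|u\|_X\leq C'\|Au\|_Y$ must break down, so I can extract a sequence $u_n\in X$ with $\|u_n\|_X=1$ for all $n$ and $\|Au_n\|_Y\to 0$ as $n\to\infty$. The first step is simply this normalization: rescaling the failing vectors to have unit norm converts the failure of the inequality into the decay $\|Au_n\|_Y\to 0$ along a bounded sequence.

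Next I would exploit compactness. Since $\{u_n\}$ is bounded in $X$ and $K\colon X\to Z$ is compact, the image sequence $\{Ku_n\}$ has a convergent subsequence in $Z$; after passing to it (without relabeling), $\{Ku_n\}$ is Cauchy in $Z$. Applying the hypothesized estimate to the difference $u_n-u_m$ gives
\[
	\|u_n-u_m\|_X\leq C\big(\|A(u_n-u_m)\|_Y+\|K(u_n-u_m)\|_Z\big).
\]
The first term on the right tends to $0$ because $\|Au_n\|_Y\to 0$, and the second because $\{Ku_n\}$ is Cauchy. Hence $\{u_n\}$ is Cauchy in $X$ and converges to some $u\in X$, and continuity of the norm yields $\|u\|_X=1$.

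Finally, boundedness (hence continuity) of $A$ gives $Au=\lim_n Au_n=0$, so injectivity of $A$ forces $u=0$, contradicting $\|u\|_X=1$. This contradiction produces the constant $C'$ and completes the proof. There is no genuine obstacle here: this is the standard argument by which compactness removes a subordinate term from an a priori estimate. The only points requiring care are correctly negating the conclusion to produce the unit-norm sequence with $\|Au_n\|_Y\to 0$, and invoking compactness of $K$ on the bounded sequence $\{u_n\}$ to pass to a subsequence along which $\{Ku_n\}$ converges in $Z$.
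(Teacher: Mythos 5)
Your proof is correct and is exactly the standard compactness--uniqueness argument that the paper invokes: the paper does not prove the lemma itself but cites \cite{MR2068966}, where the proof is precisely this contradiction argument (normalize a failing sequence, extract a subsequence with $Ku_n$ convergent, show $u_n$ is Cauchy, and contradict injectivity of $A$ at the limit). No gaps; all steps are justified.
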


\medskip
We now prove the main theorem:

\begin{proof}[Proof of Theorem \ref{thm:main}]
Let $u\in x^\d L^2(M,dV_g)\cap \ker(\calN_g)$, $\d\in (-n/2,n/2)$.
We claim that $u=0$.
Note that by Corollary \ref{cor:boundedness_I} the X-ray transform is well defined on such a $u$ in the sense that $Iu\in {\lg\eta\rg_h^{-\d'}}{L^2(\p_-S^*M;d\l_\p)}$, $\d'<\min\{\d,0\}$.
By Proposition \ref{prop:polyhomogeneous}, $u\in \calA_{phg}^E(M)$, $E\geq n$.
In particular, $u\in x^\d L^2(M,dV_g)$ for $\d<n/2$.
Now  by \eqref{eq:adjoint_2} and the discussion immediately after it we find
\begin{align}
	0=(\calN_g u,u)_{L^2(M,dV_g)}=&
	({I}^*{I} u,u)_{L^2(M,dV_g)}=\|{I}u\|_{L^2(\p_-S^*M;d
		\l_\p)}^2.
\end{align}
This implies that $Iu=0$.
Then one checks that the proof of Theorem 1 in \cite{2017arXiv170905053G}, which shows injectivity of $I$ on $xC^\infty (M)$, also applies for polyhomogeneous functions in $\calA_{phg}^E(M)$, $E\geq 1$.
More specifically, by the proof of \cite[Proposition 3.15]{2017arXiv170905053G} it follows that for $u\in \calA_{phg}^E(M)\cap \ker I$
  one has the stronger result $u\in \dot{C}^\infty(M)$ (i.e. $u$ vanishes to infinite order at the boundary).
  Then the injectivity argument using Pestov identities in the proof of Theorem 1 in the same paper yields $u\equiv 0$.
We have shown that $\calN_g$ is injective on $x^\d L^2(M,dV_g)$, $\d>-n/2$.
Now by Proposition \ref{prop:parametrix} we have 
\begin{equation}
	\|u\|_{x^\d H_0^{s}(M,dV_g)}\leq C\left(\|\calN_gu\|_{x^\d H_0^{s+1}(M,dV_g)}+\|Ku\|_{x^\d H_0^{s}(M,dV_g)}
	\right), \quad \d\in (-n/2,n/2),\: s\geq 0,
\end{equation}
where $K:x^\d H_0^{s}(M,dV_g)\to x^\d H_0^{s}(M,dV_g)$ is compact.
Thus Lemma \ref{lm:functional} implies
\begin{equation}
	\|u\|_{x^\d H_0^{s}(M,dV_g)}\leq C'\|\calN_gu\|_{x^\d H_0^{s+1}(M,dV_g)}, \quad \d\in (-n/2,n/2),\: s\geq 0,
\end{equation}
which is the {claimed estimate}.
\end{proof}

\subsubsection*{Acknowledgments}
This research was carried out in part during the author's visit at the Department of Mathematics, Stanford University in February 2019, which was partially supported by NSF Grant No. DMS-1608223 of Rafe Mazzeo and partially by a travel fellowship by the Department of Mathematics, University of Washington. 
The author is indebted to Rafe Mazzeo for his hospitality during this visit, for many helpful discussions, and for suggesting the arguments included in Lemma \ref{lm:conormal} and Proposition \ref{prop:polyhomogeneous}. 
The work contained in this paper has appeared as Chapter 3 of the author's University of Washington PhD thesis (\cite{EptaminitakisNikolaos2020Gxto}).
The author would like to thank Robin Graham for numerous helpful discussions, suggestions, and detailed feedback on the aforementioned thesis, and Gunther Uhlmann for helpful discussions and financial support.

{
\printbibliography}
\end{document}